\documentclass[11pt, a4paper, oneside,reqno]{amsart}

\makeatletter
\def\@settitle{\begin{center}%
		\baselineskip14\p@\relax
		\normalfont\LARGE\bfseries
		\@title
	\end{center}%
}

\def\section{\@startsection{section}{1}%
	\z@{.7\linespacing\@plus\linespacing}{.5\linespacing}%
	{\normalfont\large\bfseries}}

\def\subsection{\@startsection{subsection}{2}%
	\z@{.5\linespacing\@plus.7\linespacing}{.5\linespacing}%
	{\normalfont\bfseries}}

\def\@setauthors{%
  \begingroup
  \def\thanks{\protect\thanks@warning}%
  \trivlist
  \centering\footnotesize \@topsep30\p@\relax
  \advance\@topsep by -\baselineskip
  \item\relax
  \author@andify\authors
  \def\\{\protect\linebreak}%
  \authors%
  \ifx\@empty\contribs
  \else
    ,\penalty-3 \space \@setcontribs
    \@closetoccontribs
  \fi
  \endtrivlist
  \endgroup
}

\makeatother

\usepackage[table, xcdraw, usenames, dvipsnames]{xcolor}
\definecolor{darkblue}{rgb}{0.0, 0.0, 0.45}
\definecolor{darkgreen}{rgb}{0.0, 0.45, 0}
\usepackage[colorlinks	= true,
raiselinks	= true,
linkcolor	= darkblue, 
citecolor	= Mahogany,
urlcolor	= darkgreen,
pdfauthor	= {},
pdftitle	= {},
pdfkeywords	= {},
pdfsubject	= {},
plainpages	= false]{hyperref}

\allowdisplaybreaks
\pdfoutput=1
\date{\today}

\usepackage{dsfont,amsfonts,amssymb,amsmath,amsthm}
\usepackage{mathtools, mathrsfs}

\usepackage{graphicx}
\usepackage[font=small,labelfont=rm]{subcaption}
\usepackage[font=small,margin=10pt]{caption}
\usepackage{wrapfig}

\usepackage{multirow}
\usepackage{booktabs}

\usepackage{enumitem}
\usepackage{tikz}
\usepackage{courier} 

\usepackage{nicefrac}

\theoremstyle{plain}
\newtheorem{Thm}{Theorem}[section]

\newtheorem{Prop}[Thm]{Proposition}

\newtheorem{Lem}[Thm]{Lemma}
\newtheorem{Cor}[Thm]{Corollary}
\newtheorem{As}[Thm]{Assumption}

\newtheorem{Rem}[Thm]{Remark}

\newtheorem{Ex}[Thm]{Example}

\DeclareMathOperator*{\argmin}{\arg\!\min}
\DeclareMathOperator*{\argmax}{\arg\!\max}

\newcommand{\R}{\mathbf{R}}

\newcommand{\N}{\mathbf{N}}

\newcommand{\ra}{\rightarrow}

\newcommand{\ol}[1]{\overline{#1}}

\newcommand{\Let}{\coloneqq}

\newcommand{\tr}{^{\top}}

\newcommand{\norm}[1]{\left\Vert #1 \right\Vert}
\newcommand{\inner}[2]{\left\langle #1, #2 \right\rangle }

\def\ssum{\begingroup\textstyle \sum\endgroup}

\newcommand{\opt}{_\star}

\newcommand{\EE}{\mathds{E}}

\graphicspath{{Fig/}}

\usepackage[square,numbers]{natbib}
\usepackage{bm}

\usepackage{algorithm, algpseudocode}
\algrenewcommand\algorithmicrequire{\textbf{Input:}}
\algrenewcommand\algorithmicensure{\textbf{Output:}}

\usepackage{tikz}
\usetikzlibrary{arrows.meta, calc, positioning, decorations.pathmorphing, patterns}
\usetikzlibrary{decorations.pathreplacing}
\usetikzlibrary{calc,arrows.meta}

\usepackage{subcaption} 

\usepackage{xargs}
\usepackage{booktabs}
\usepackage{diagbox}

\usepackage{scalerel}

\newcommand{\Trc}{\operatorname{tr}}
\newcommand{\range}{\operatorname{range}}
\newcommand{\Acl}{A}
\newcommand{\Ccl}{C}
\newcommand{\Bu}{B}
\newcommand{\Bw}{E}
\newcommand{\Pw}{\mathsf{P}}
\newcommand{\Qw}{\mathsf{Q}}
\newcommand{\Sp}{\Sigma_{\Pw}}

\newcommand{\Ks}{\mathcal{K}_{\mathrm{s}}}
\newcommand{\Wo}{X}
\newcommand{\Wc}{Y}
\newcommand{\Gzw}{G}

\newcommand{\F}{\scaleto{\mathrm{F}}{4pt}}
\newcommand{\St}{\scaleto{\mathrm{S}}{4pt}}
\newcommand{\DR}{\scaleto{\mathrm{D}}{4pt}}
\newcommand{\KL}{\scaleto{\mathrm{KL}}{4pt}}
\newcommand{\WA}{\scaleto{\mathrm{W}}{4pt}}
\newcommand{\D}{\mathcal{D}}
\newcommand{\PS}{\mathcal{P}}
\newcommand{\LO}{\mathbb{L}}
\newcommand{\LC}{\mathbb{L}^*}

\title[]{Benign Geometry and Distributional Robustness\\ of $\mathcal{H}_2$ Synthesis}

\author[]{Arman~Sharifi~Kolarijani$^{1,2}$, Peyman Mohajerin Esfahani$^{3}$, Tam\'{a}s Keviczky$^2$,\\
and Mohamad Amin Sharifi Kolarijani$^{2}$\\
\\
$^1$Alpha Brain Technologies, The Netherlands \\
$^2$Delft University of Technology, The Netherlands\\
$^3$University of Toronto, Canada}
\thanks{Correspondence to: Mohamad Amin Sharifi Kolarijani $<$\texttt{m.a.sharifikolarijani@tudelft.nl}$>$.}
\thanks{This work was partially supported by the Horizon Europe Pathfinder Open project RELIEVE-101099481.} 

\begin{document}

\begin{abstract}
In this paper, we study standard and distributionally robust $\mathcal{H}_2$ synthesis problem of a stabilizing state-feedback controller for discrete-time linear time-invariant systems. 
Without requiring a nonsingular disturbance controllability Gramian
or a positive-definite control penalty, we establish that stationarity of a stabilizing gain in standard $\mathcal{H}_2$ synthesis is equivalent to global optimality and derive an exact Pythagorean identity for the performance difference.
We then generalize the $\mathcal{H}_2$ synthesis problem by considering i.i.d.~disturbances with zero mean and uniformly bounded second moments, and extend the stationarity-optimality equivalence to the corresponding distributionally robust $\mathcal{H}_2$ synthesis.
Moreover, we show that every standard $\mathcal{H}_2$-optimal gain is simultaneously optimal for every such distributionally robust problem.
Finally, we specialize the framework to Frobenius, Kullback--Leibler, and Wasserstein-2 ambiguity sets and obtain explicit characterizations of their worst-case covariances, which, in turn, provide performance certificates for a common optimal controller.

\smallskip
\textsc{Keywords:} $\mathcal{H}_2$ synthesis; benign non-convexity; distributionally robust control; covariance ambiguity.

\end{abstract}

\maketitle

\section{Introduction}
\label{sec:intro}

Reliable feedback design in the presence of uncertainty and exogenous disturbances is a fundamental challenge across control applications.
Robust control provides a systematic framework for addressing this challenge, with influence extending beyond classical engineering to economics~\cite{hansen2008robustness}, networked systems~\cite{cohen2010complex}, biological systems~\cite{kitano2004biological}, and optimization~\cite{bental2009robust}.

Among the principal robust-performance frameworks are $\mathcal{H}_2$, $\mathcal{H}_\infty$, and mixed $\mathcal{H}_2/\mathcal{H}_\infty$ synthesis~\cite{Doyle1989,ZhouDoyleGlover1996}.
The $\mathcal{H}_2$ criterion in particular minimizes the closed-loop impulse-response energy, or, equivalently, the steady-state performance output power under zero-mean white noise with unit covariance. 
The main difficulty is of course that the $\mathcal{H}_2$ synthesis leads to a nonconvex problem, even in the case of static state-feedback design. 
However, the state-feedback $\mathcal{H}_2$ synthesis problem admits a convex linear matrix inequality (LMI) formulation using Lyapunov inequalities and trace bounds on closed-loop
Gramians~\cite{boyd1994lmi,CaverlyForbes2019LMI}, after suitable changes of variables~\cite{meilakhs1975stabilization}. 
Another classical synthesis method uses the algebraic Riccati equation (ARE)~\cite{ZhouDoyleGlover1996}.
Indeed, state-feedback $\mathcal{H}_2$ synthesis is closely related to linear quadratic regulation (LQR) problem by interpreting the squared performance output as a quadratic state-and-control cost~\cite{Kalman1960Contributions,trentelman1995sampled}.
Under the usual regular LQR assumptions, the stabilizing Riccati solution yields an $\mathcal{H}_2$-optimal gain.

The Riccati characterization also connects classical $\mathcal{H}_2$ synthesis to optimality conditions expressed directly in the feedback gain for the LQR problem. 
To be precise, gradient-dominance results under regular LQR assumptions establish that first-order stationarity is necessary and \emph{sufficient} for global optimality~\cite{FazelGeKakadeMesbahi2018,Bu2019LQRFirstOrder,watanabe2026gradient}. 
Such bounds also underpin global convergence guarantees for suitable policy-gradient methods.
However, these results require certain assumptions on the problem data that, among others, include positive-definite control penalty and positive definiteness of the accumulated state covariance~\cite[Sec.~3.3]{watanabe2026gradient}. 
The latter condition, in particular, translates to nonsingularity of the disturbance controllability Gramian in the context of $\mathcal{H}_2$ synthesis. 
This motivates examining stationarity and global optimality in $\mathcal{H}_2$ synthesis without requiring these conditions. 

The standard formulation of $\mathcal{H}_2$ synthesis prescribes the disturbance covariance.
However, in many applications, the underlying distribution is often estimated from finite data, affected by distribution shift, or only partially known.
Distributionally robust optimization (DRO)~\cite{DelageYe2010,Wiesemann2014,RahimianMehrotra2019} addresses such misspecification by optimizing against the worst-case probability distribution in an ambiguity set.
In its generic form, DRO seeks a decision $x$ solving $\inf_{x \in \mathcal{X}} \sup_{\Pw \in \D} \EE_{\xi\sim\Pw}[\ell(x,\xi)]$,
where $\ell$ is a loss function and $\D$ encodes the available distributional information.
Different ambiguity sets induce different notions of robustness.
Moment-based sets constrain low-order statistics and are attractive when only means and covariances can be estimated reliably~\cite{DelageYe2010,Wiesemann2014}.
Kullback--Leibler (KL) divergence produces relative-entropy neighborhoods and exponential-tilting representations~\cite{hu2013kullback}.
Wasserstein ambiguity sets compare distributions through optimal transport, accommodate support mismatch, and often yield finite-sample guarantees and tractable reformulations in data-driven settings~\cite{EsfahaniKuhn2018,GaoKleywegt2023}.
In control, moment-based ambiguity has been used to enforce distributionally robust chance and conditional value-at-risk constraints~\cite{vanparys2016distributionally} and to address LQR problems with multiplicative disturbance~\cite{Coppens2020DRLQR}.
Another line of research formulates minimax state-feedback control with adversarial conditional disturbance distributions, using Wasserstein ambiguity or penalties~\cite{yang2021wasserstein,kim2023distributional} and stagewise relative-entropy constraints~\cite{Falconi2025DistributedUncertainty}.
Relative-entropy duality also connects such formulations to exponential criteria in risk-sensitive control~\cite{Jacobson1973Exponential,Whittle1981RiskSensitive,petersen2000minimax}.
For partially observed systems controlled through output feedback, optimality of linear policies has been established under Wasserstein and KL ambiguity on temporally independent disturbances~\cite{Taskesen2023DRLQ,Fochesato2025KLLQG}.
Related studies impose a common disturbance distribution across time~\cite{Schoebi2026StationaryDRLQG} or consider infinite-horizon constrained control with temporal dependence~\cite{Brouillon2025DRInfiniteHorizon}.
More directly, Gramlich et al.~\cite{Gramlich2025DRLMI} connect Wasserstein distributionally robust control to $\mathcal{H}_2$ synthesis and derive convex LMI formulations for independent and temporally correlated disturbances.
These formulations differ in their information patterns, temporal assumptions, and admissible disturbance distributions.
For state-feedback with zero-mean i.i.d.\ additive disturbances, the regular LQR gain is already independent of the disturbance covariance and consequently optimal across the corresponding distributional ambiguity sets, as also noted in~\cite{Taskesen2023DRLQ}. 
We note, however, that uncertainty in disturbance statistics changes the performance guarantee associated with a controller, even when the same gain remains optimal for all admissible covariances.
These observations motivate examining the persistence of nominal optimality under distributional ambiguity beyond the regular LQR setting.

In this paper, we consider a DR framework for state-feedback $\mathcal{H}_2$ synthesis for discrete-time linear time-invariant (LTI) systems with full state observation. 
The ambiguity concerns the common marginal distribution of an i.i.d.\ zero-mean disturbance sequence that belongs to a set $\D$ that is independent of the gain and has uniformly bounded second moments. 
Following the direct gain-optimization viewpoint originating with~\cite{Kalman1960Contributions} and extended to output feedback in~\cite{Levine1970ConstantOFB}, we consider
\begin{align*}
    \inf_{K \in \Ks} \sup_{\Pw \in \D} v(K, \Pw),
\end{align*}
where $\Ks$ is the set of stabilizing gains and $v(K,\Pw)$ is the generalized $\mathcal{H}_2$ cost accounting for a generic disturbance distribution~$\Pw$. 
In particular, $v(K, \Pw)$ reduces to the standard $\mathcal{H}_2$ cost for $\Pw = \mathcal{N}(0,I)$, that is, normal distribution with zero mean and unit covariance. 
Our analysis only assumes attainment of the optimal value in the standard $\mathcal{H}_2$ synthesis problem and does not impose any extra restrictions on the problem data  such as nonsingularity of disturbance controllability Gramian or positive-definiteness of control penalty.
Our \textbf{main contributions} are as follows:
\begin{itemize}
    \item \textbf{Sufficiency of stationarity for $\mathcal{H}_2$-optimality:} 
    We show that first-order stationarity is necessary and sufficient for global optimality for the DR $\mathcal{H}_2$ synthesis (Theorem~\ref{thm:DR_H2_optimal_grad}). 
    In particular, for the standard $\mathcal{H}_2$ synthesis this result follows from  an exact Pythagorean performance-difference identity (Theorem~\ref{thm:standard_H2_optimal_grad}). 

    \item \textbf{Distributionally robustness of standard $\mathcal{H}_2$-optimal gain:} 
    Exploiting the Loewner minimality of their \emph{compressed} observability Gramian (Corollary~\ref{cor:obs_gram_minimal}), we show that \emph{every} standard $\mathcal{H}_2$-optimal gain is simultaneously optimal for the considered DR $\mathcal{H}_2$ synthesis problems (Theorem~\ref{thm:dr_H2_standard}). 

    \item \textbf{Performance certificates for specific ambiguity sets:} 
    We obtain explicit worst-case covariances/distributions and provide robustness certifications for the DR $\mathcal{H}_2$ synthesis problems with ambiguity sets defined based on the Frobenius, KL, and Wasserstein-2 (W2) balls around a nominal covariance/distribution (Propositions~\ref{prop: DR cost frob}, \ref{prop: DR cost KL}, and \ref{prop: DR cost W2}).
\end{itemize}

The remainder of the paper is organized as follows. 
In Section~\ref{sec:standard_H2}, we revisit standard $\mathcal{H}_2$ synthesis and analyze the regularity properties of the corresponding cost and optimal solution. 
In Section~\ref{sec:DR-H2}, we generalize $\mathcal{H}_2$ cost by considering i.i.d., zero-mean disturbances with arbitrary covariance, and analyze the regularity properties of the corresponding DR cost and optimal solution. 
In Section~\ref{sec:specific-ambiguity-set}, we specialize the general results of Section~\ref{sec:DR-H2} to ambiguity sets defined by Frobenius-norm, KL-divergence, and W2 balls. 
Section~\ref{sec:conclusion} concludes the paper with some final remarks on the limitations of the present paper.

\noindent\textbf{Notations.} 
The set of real numbers is denoted by $\R$. We also denote $\N_0 \Let \{0,1,\ldots\}$. 
For a vector $x\in\R^n$, the 2-norm is denoted by $\norm{x}_2 = \sqrt{x\tr x}$. 
For a matrix $M \in \R^{n\times n}$, $\Trc(M)$ is the trace of $M$ (i.e., the sum of diagonal entries of $M$), and $\rho(M)$ is spectral radius of $M$ (i.e., the largest absolute value among its eigenvalues). 
For a symmetric matrix $M \in \R^{n\times n}$, we use $M\succeq 0$ (respectively, $M\succ 0$) to denote $M$ is positive semi-definite (respectively, positive definite). 
We use $I$ to denote the identity matrix. 
For two matrices $M,N\in\R^{n\times m}$, $M\tr$ is the transpose of $M$, $\inner{M}{N} = \Trc(M\tr N)$ is the (Frobenius) inner product of $M$ and $N$, $\|M\|_{F} = \sqrt{\inner{M}{M}}$ is the Frobenius norm of $M$, and $\range(M)$ is the range space of $M$. 
$\mathcal{B}(\R^p)$ is the set of Borel probability measures on $\mathbb{R}^p$. 
We use $w\sim \Pw \in\mathcal{B}(\R^p)$ to denote that the random vector $w\in\R^p$ has the distribution $\Pw$.
$\EE[\cdot]$ denotes the expectation operation; we occasionally use the subscript as in $\EE_{\Pw}[\cdot]$ to emphasize that the expectation is with respect to a random variable with distribution~$\Pw$. 
For a random vector $w\sim\Pw \in \mathcal{B}(\R^p)$, we use $\mu_{\Pw} \Let \EE_{\Pw}[w]$ and
$\Sp \Let \EE_{\Pw}[w w\tr]$ to denote its first and second moments, respectively. 
For distributions with zero mean then $\Sp$ denotes also the covariance.
We use $\mathcal{N}(\mu,\Sigma)$ to denote the Gaussian distribution with mean $\mu$ and covariance $\Sigma$. 
For a linear subspace $\mathcal{L} \subset \R^{m\times n}$, we use $\mathcal{L}^{\perp}$ to denote the orthogonal complement of $\mathcal{L}$ with respect to the Frobenius inner product. 
Throughout the paper, we use $K$ to denote the static state-feedback gain. 
To avoid clutter, we exclusively use $K$ in the subscript to show the dependence on $K$; e.g., $\Acl_K$ denotes the dependence of the variable~$\Acl_K$ on the gain~$K$.

\section{Standard $\mathcal{H}_2$ synthesis}
\label{sec:standard_H2}

In this section, we revisit the standard full-state $\mathcal{H}_2$ optimal control problem. 
We analyze the regularity properties of the corresponding cost, and, 
in particular, show that first-order stationarity is a \emph{sufficient} condition for optimality of a stabilizing gain. 

\subsection{Standard $\mathcal{H}_2$ cost}

Consider the discrete-time LTI system
\begin{equation}\label{eq: standard LTI}
\left\{\begin{array}{rcl}
     x_{t+1} &=& A x_t + \Bu u_t + \Bw w_{t}, \\
     z_t &=& Cx_t+Du_t,
\end{array}
\right.\quad t\in\N_0, 
\end{equation}
where $x_t \in \mathbb{R}^n$ is the system state, $u_t \in \mathbb{R}^m$ is the control input, $w_t \in \mathbb{R}^p$ is a disturbance input, and $z_t\in \R^q$ is the performance signal. 
$A$, $\Bu$, $\Bw$, $C$, and $D$ are matrices of compatible dimensions that describe the dynamics of the system. 
Under linear state-feedback law $u_t = K x_t$, where $K \in \R^{m \times n}$ is the gain, the closed-loop system becomes
\begin{subequations}\label{eq:closed-loop}
    \begin{equation}\label{eq:closed-loop-dyn}
\left\{\begin{array}{rcl}
     x_{t+1} &=& \Acl_K x_t + \Bw w_{t}, \\
     z_t &=& \Ccl_K x_t,
\end{array}\right.
\end{equation}
where
\begin{equation}\label{eq:closed-loop-def}
    \Acl_K \Let A + \Bu K \quad \text{and} \quad \Ccl_K \Let C + D K.
\end{equation}
\end{subequations}
In this paper, we are concerned with the $\mathcal{H}_2$ synthesis of a stabilizing state-feedback controller. 
Let us define
\begin{equation*}
    \Ks:=\{K \in\R^{m\times n}\;:\;\rho(\Acl_K)<1\},
\end{equation*}
to be the set of all stabilizing controllers for which $\Acl_K$ is Schur. 
Notice that $\Ks$ non-empty if and only if the pair $(A,\Bu)$ is stabilizable. 
The standard $\mathcal{H}_2$ synthesis is then formulated as 
\begin{equation}\label{eq:standard_H2_problem}
    \inf_{K \in \Ks} \left\{ v^{\St}(K) \Let \| \Gzw_K \|_{\mathcal{H}_2}^2 \right\},
\end{equation}
with the cost function being the squared $\mathcal{H}_2$-norm of the closed-loop transfer function
\begin{equation*}
   \Gzw_K (z) \Let \Ccl_K (zI - \Acl_K )^{-1} \Bw, 
\end{equation*}
from $w$ to $z$~\cite[Sec.~21.5]{ZhouDoyleGlover1996}. 
This cost can be expressed in the frequency domain or, equivalently, in the time domain as the sum of the squared norms of the impulse response of the system. 
This equivalence is a consequence of Parseval's theorem~\cite[Thm.~8.16]{rudin_pma}.
In particular, we have
\begin{align}\label{eq:standard-H2-cost-markov}
    v^{\St}(K) = \| \Gzw_K \|_{\mathcal{H}_2}^2 \Let \frac{1}{2\pi} \int_{0}^{2\pi} \Trc \left( \Gzw_K (e^{j\omega})^*\; \Gzw_K (e^{j\omega}) \right) d\omega =
    \sum_{i=0}^\infty \| \Ccl_K \Acl_K^i \Bw \|_{F}^2.
\end{align}
The standard $\mathcal{H}_2$ cost can be alternatively expressed using the \emph{observability} Gramian of the pair $(\Acl_K, \Ccl_K)$, given by~\cite[Ch.~6.6]{Chen1999LinearSystem}  
\begin{align}\label{eq:gram_obs}
    \Wo_K \;\Let\; \sum_{t=0}^\infty (\Acl_K^t)\tr \Ccl_K\tr \Ccl_K \Acl_K^t. 
\end{align}
Indeed, we have (see, e.g.,~\cite[Rem.~21.6]{ZhouDoyleGlover1996}) 
\begin{subequations}\label{eq:standard-H2-cost-M_k}
\begin{equation}\label{eq:standard-H2-cost}
   v^{\St}(K) = \langle I , M_K \rangle, \qquad \forall K\in\Ks,
\end{equation}
with the \emph{compressed} observability Gramian defined as
\begin{align}\label{eq:M_k}
    M_K \Let \Bw\tr \Wo_K \Bw.
\end{align}
\end{subequations}
The preceding representation follows directly from the time-domain expression; see also Lemma~\ref{lem: general_H2_cost}. 
We note that, for each $K\in\Ks$, the Gramian~$\Wo_K$ is the unique positive semi-definite solution of the Lyapunov equation~\cite[Lem.~21.2]{ZhouDoyleGlover1996}
\begin{align}\label{eq:gram_obs_lyap}
   \LO_{\Acl_K}(\Wo_K ) + \Ccl_K\tr \Ccl_K = 0  \quad \text{where} \quad  \LO_{A}(\Wo) := A\tr \Wo A - \Wo.
\end{align}

We next establish smoothness and compute the gradient of the standard cost $v^{\St}$. 
Our derivations are direct applications of differential calculus to matrices; see, e.g.,~\cite{MagnusNeudecker2019MDC}. 
Similar results are provided for the LQR problem in, e.g., \cite{FazelGeKakadeMesbahi2018,Fatkhullin2021GradientFeedback}.
For the purpose of generality, given a matrix $\Sigma \succeq 0$ and any stabilizing gain $K\in\Ks$, let us define the corresponding \emph{controllability} Gramian of the pair $(\Acl_K, \Bw \Sigma^{1/2})$ by~\cite[Lem.~21.2]{ZhouDoyleGlover1996}
\begin{align}\label{eq:contr_gram}
    \Wc_K(\Sigma) \Let \sum_{t=0}^{\infty} \Acl_K^{t} \Bw  \Sigma \Bw\tr \big( \Acl_K^{t} \big)\tr. 
\end{align}
We notice that $\Wc_K(\Sigma)$ is the unique positive semi-definite solution of the Lyapunov equation~\cite[Lem.~21.2]{ZhouDoyleGlover1996}
\begin{align}\label{eq:Lyap_eq_control}
   \LC_{\Acl_K}\big(\Wc_K(\Sigma)\big) +  \Bw  \Sigma \Bw\tr = 0 \quad \text{where} \quad  \LC_{A}(Y) := A Y A\tr - Y.
\end{align}
In particular, let
\begin{align*}
    \Wc_K^{\St} \Let \Wc_K(I),
\end{align*}
denote the controllability Gramian of the pair $(\Acl_K, \Bw)$. 

\begin{Lem}[Regularity of $v^{\St}$] \label{lem:standard_H2_grad}
The standard cost~$v^{\St}$ is real-analytic on $\Ks$, and
\begin{align}\label{eq:standard_grad}
    \nabla_K v^{\St}(K) = 2 S_K \Wc_K^{\St} \quad \text{where} \quad S_K \Let \Bu\tr \Wo_K \Acl_K + D\tr \Ccl_K, \quad \forall K\in\Ks. 
\end{align}
\end{Lem}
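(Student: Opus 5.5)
The plan has two parts: real-analyticity from the Lyapunov characterization, and the gradient from a first-order perturbation combined with a duality argument. Throughout, $K\in\Ks$ is fixed.

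\emph{Analyticity.} First, $\Ks$ is open, since the eigenvalues of $\Acl_K$ depend continuously on $K$. For $K\in\Ks$, the Lyapunov equation~\eqref{eq:gram_obs_lyap} is a linear system in $\Wo$ with coefficient operator $\LO_{\Acl_K}$. This operator is invertible because its eigenvalues are $\lambda_i\lambda_j-1\neq 0$ whenever $\rho(\Acl_K)<1$. Its entries are polynomial in $K$, and so is the right-hand side $\Ccl_K\tr\Ccl_K$. By Cramer's rule, $\Wo_K$ is therefore a rational function of $K$ with nonvanishing denominator on $\Ks$, hence real-analytic there. Consequently $v^{\St}(K)=\inner{I}{\Bw\tr\Wo_K\Bw}=\inner{\Bw\Bw\tr}{\Wo_K}$ is real-analytic on $\Ks$.

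\emph{Gradient.} Take a direction $\Delta\in\R^{m\times n}$ and write $\Wo'$ for the directional derivative of $\Wo_K$ along $\Delta$. Differentiating~\eqref{eq:gram_obs_lyap}, with $\Acl_K'=\Bu\Delta$ and $\Ccl_K'=D\Delta$, gives
\begin{align*}
\LO_{\Acl_K}(\Wo') + \Delta\tr\big(\Bu\tr\Wo_K\Acl_K + D\tr\Ccl_K\big) + \big(\Acl_K\tr\Wo_K\Bu+\Ccl_K\tr D\big)\Delta = 0,
\end{align*}
that is, $\LO_{\Acl_K}(\Wo') + \Delta\tr S_K + S_K\tr\Delta=0$. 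Hence $\Wo' = \sum_{t\ge0}(\Acl_K^t)\tr(\Delta\tr S_K+S_K\tr\Delta)\Acl_K^t$, where the series converges because $\Acl_K$ is Schur.

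Next, pair with $\Bw\Bw\tr$ and move the powers of $\Acl_K$ across the trace, using the series~\eqref{eq:contr_gram} for $\Wc_K^{\St}$:
\begin{align*}
\mathrm{d}v^{\St}[\Delta] = \inner{\Bw\Bw\tr}{\Wo'} = \inner{\Wc_K^{\St}}{\Delta\tr S_K+S_K\tr\Delta} = 2\inner{S_K\Wc_K^{\St}}{\Delta}.
\end{align*}
The last equality uses the symmetry of $\Wc_K^{\St}$ and the cyclicity of the trace. This identifies $\nabla_K v^{\St}(K)=2S_K\Wc_K^{\St}$, which is~\eqref{eq:standard_grad}.

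The main obstacle, mild in this case, is justifying two interchanges: differentiation with the Lyapunov solution map, and summation with the trace. The first is handled by the analyticity of the inverse operator $\LO_{\Acl_K}^{-1}$ established above. The second holds by absolute convergence of the series for Schur $\Acl_K$. Alternatively, one can avoid series entirely by using the adjoint identity $\inner{\LO_{A}(X)}{Y}=\inner{X}{\LC_{A}(Y)}$ together with~\eqref{eq:Lyap_eq_control} at $\Sigma=I$.
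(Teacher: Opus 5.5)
Your proof is correct. For the gradient you follow the paper in substance: the paper substitutes $\Bw\Bw\tr=-\LC_{\Acl_K}(\Wc_K^{\St})$ and moves $\LC_{\Acl_K}$ across with the adjoint identity. You instead solve the differentiated equation $\LO_{\Acl_K}(\Wo')=-(\Delta\tr S_K+S_K\tr\Delta)$ by its series and interchange sum and trace. That is the same duality written out term by term, as you note yourself at the end.

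The genuine difference is in the analyticity step. The paper reads analyticity directly off the convergent series~\eqref{eq:gram_obs}. Taken literally, this needs one more argument, because locally uniform limits of real-analytic functions need not be real-analytic. One has to pass to complex $K$ near $\Ks$, where the series of polynomials in $K$ still converges locally uniformly to a holomorphic function.

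Your route avoids this subtlety. The operator $\LO_{\Acl_K}$ has eigenvalues $\lambda_i\lambda_j-1\neq0$ and is therefore invertible, so by Cramer's rule $\Wo_K$ is a rational function of $K$. Its denominator $\det(\Acl_K\tr\otimes\Acl_K\tr-I)$ does not vanish on $\Ks$. The same argument also gives the differentiability you need to differentiate the Lyapunov equation. The paper's adjoint-identity computation is slightly more economical, since it never needs an explicit expression for $\Wo'$.
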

\begin{proof} 
Recall that $v^{\St}(K) = \langle I , M_K \rangle$ for $K\in\Ks$. 
We start with showing $v^{\St}$ is real-analytic on $\Ks$. 
Observe that the mapping $\Acl_K \mapsto \Wo_K$ admits the convergent series representation~\eqref{eq:gram_obs} for all Schur $\Acl_K$. 
Therefore, $\Wo_K$ depends real-analytically on the entries of $\Acl_K = A + \Bu K$ and hence on $K$ in the constraint set~$\Ks$. 
Now, observe that the maps $ \Wo_K \mapsto M_K= \Bw\tr \Wo_K \Bw$ and $M_K \mapsto v^{\St}(K) = \langle I , M_K \rangle$ are both linear. 
Thus, the map $K \mapsto v^{\St}(K)$ is real-analytic for all $K\in\Ks$.

We next derive the gradient of $v^{\St}$. 
First, observe that
\begin{align}\label{eq:f-diff-Xo}
    dv^{\St} = \langle I , dM_K \rangle 
    = \langle I , \Bw\tr (d\Wo_K) \Bw \rangle 
    = \langle \Bw \Bw\tr ,  d\Wo_K \rangle.
\end{align}
Recall now the Lyapunov operator $\LO_{\Acl_K}$ and its adjoint operator $\LC_{\Acl_K}$ on the space of $n$-by-$n$, real, symmetric matrices and notice the corresponding adjoint identity
\begin{align}\label{eq:adjoint-lyap-identity}
    \langle Y , \LO_{\Acl_K}(X) \rangle = \langle \LC_{\Acl_K}(Y) , X \rangle.
\end{align}
By assumption, we have $\LC_{\Acl_K}(\Wc_{K}^{\St})+\Bw \Bw\tr = 0$. 
Then, plugging in $\Bw \Bw\tr = - \LC_{\Acl_K}(\Wc_{K}^{\St})$ to \eqref{eq:f-diff-Xo} and using the identity~\eqref{eq:adjoint-lyap-identity}, we obtain
\begin{align}\label{eq:f-diff-Xo-1}
    dv^{\St} = \langle - \LC_{\Acl_K}(\Wc_{K}^{\St}) ,  d\Wo_K \rangle =
    \langle - \Wc_{K}^{\St} ,  \LO_{\Acl_K}(d\Wo_K) \rangle.
\end{align}
Now, observe that, using the Lyapunov equation~\eqref{eq:gram_obs_lyap}, that is, 
\begin{align*}
    \LO_{\Acl_K}(\Wo_K) + \Ccl_K\tr \Ccl_K = \Acl_K\tr \Wo_K \Acl_K - \Wo_K + \Ccl_K\tr \Ccl_K  = 0,
\end{align*}
we have
\begin{align*}
     (d\Acl_K)\tr \Wo_K \Acl_K + \Acl_K\tr (d\Wo_K) \Acl_K +  \Acl_K\tr \Wo_K (d\Acl_K) - d\Wo_K +(d\Ccl_K)\tr \Ccl_K + \Ccl_K\tr (d\Ccl_K)  = 0,
\end{align*}
and hence
\begin{align*}
    \LO_{\Acl_K}(d\Wo_K) = - (d\Acl_K)\tr \Wo_K \Acl_K - \Acl_K\tr \Wo_{K} (d\Acl_K) -(d\Ccl_K)\tr \Ccl_K -\Ccl_K\tr (d\Ccl_K).
\end{align*}
Plugging in the latter equality in~\eqref{eq:f-diff-Xo-1} and using the symmetry of $\Wc_{K}^{\St}$, we arrive at 
\begin{align*}
    dv^{\St} = 2 \langle \Wc_{K}^{\St} ,  \Acl_K\tr \Wo_K (d\Acl_K) + \Ccl_K\tr (d\Ccl_K)  \rangle.
\end{align*}
Finally, we use the identities $d\Acl_K = \Bu (dK)$ and $d\Ccl_K = D (dK)$ to get
\begin{align*}
    dv^{\St} = \langle \Wc_{K}^{\St} ,  2(\Acl_K\tr \Wo_K \Bu+ \Ccl_K\tr D) (dK) \rangle 
    = \langle 2 S_K \Wc_{K}^{\St},  dK \rangle.
\end{align*}
By definition, we have $dv^{\St} = \langle \nabla_K v^{\St}(K),dK \rangle$ and the claim follows.
\end{proof}

\subsection{Properties of standard $\mathcal{H}_2$-optimal solution}

In this subsection, we discuss two important properties of solution of the standard $\mathcal{H}_2$ problem~\eqref{eq:standard_H2_problem}. 
In this regard, let us notice that non-emptiness of $\Ks$ is a sufficient condition for problem~\eqref{eq:standard_H2_problem} to have a finite optimal value~\cite[Thm.~4.8]{trentelman1995sampled}. 
Indeed, there exists a stabilizing LQ-optimal value matrix~$\Wo_{\mathrm{LQ}}\succeq0$, that is, the largest real symmetric solution of the generalized discrete algebraic Riccati equation (DARE) associated with
$(A,\Bu,C,D)$, such that~\cite[Sec.~3 and 4]{trentelman1995sampled}
\begin{equation}\label{eq:LQ_value_matrix}
    \langle I , \Bw^\top \Wo_{\mathrm{LQ}}\Bw \rangle =  \inf_{K\in\Ks}v^{\St}(K).
\end{equation}
However, for the optimal value to be attained by a stabilizing gain, extra conditions are required; we refer the reader to~\cite[Sec.~4]{trentelman1995sampled} for the detailed discussion on the necessary and sufficient conditions for the existence of a stabilizing optimal solution for the standard $\mathcal{H}_2$ synthesis. 
The following remark provides a set of sufficient conditions for which the regular LQ-optimal gain is also a solution of the standard-$\mathcal{H}_2$ synthesis problem. 

\begin{Rem}[Regular LQ-optimal and standard $\mathcal{H}_2$-optimal gains]\label{Rem:standrad_H2_optimal_suff}
Assume that the pair $(A,\Bu)$ is stabilizable, 
the pair $(A,C)$ is detectable, 
$C\tr D = 0$, and $D\tr D \succ 0$. 
In this case the solution to the regular LQR problem is also a solution of the standard-$\mathcal{H}_2$ synthesis problem~\cite[Sec.~3]{trentelman1995sampled}.
To be precise, the LQ-optimal feedback gain 
\begin{equation*}\label{eq:H2_optimal_gain}
    K_{\mathrm{LQ}} = -(D\tr D+\Bu\tr \Wo_{K_\mathrm{LQ}} \Bu)^{-1} \Bu\tr \Wo_{K_\mathrm{LQ}} A,
\end{equation*}
is a standard $\mathcal{H}_2$-optimal stabilizing gain, 
where $\Wo_{K_{\mathrm{LQ}}} = \Wo_{\mathrm{LQ}}$ is the unique positive semi-definite solution of the DARE
\begin{equation*}\label{eq:H2_optimal_Riccati}
    \LO_{A}(\Wo) + C\tr C - A\tr \Wo \Bu  (D\tr D+\Bu\tr \Wo \Bu)^{-1} \Bu\tr \Wo A  = 0.
\end{equation*}
Observe that the stabilizing LQ-optimal gain $K_{\mathrm{LQ}}$ is independent of the disturbance matrix~$E$.
\end{Rem}

Our next results establishes that stationarity is not only necessary, but also sufficient for optimality of a stabilizing gain for the standard $\mathcal{H}_2$ problem.  

\begin{Thm}[Stationarity and optimality for $v^{\St}$]\label{thm:standard_H2_optimal_grad} 
A stabilizing gain $K\opt\in\Ks$ is a solution of the standard $\mathcal{H}_2$ problem~\eqref{eq:standard_H2_problem} if and only if $\nabla_K v^{\St}(K\opt) = 0$. 
In fact, if $K\opt\in\Ks$ and $\nabla_K v^{\St}(K\opt) = 0$, then
\begin{equation}\label{eq:standard_H2_optimal_grad}
    \left\| \Gzw_K \right\|_{\mathcal{H}_2}^{2} - \left\| \Gzw_{K\opt} \right\|_{\mathcal{H}_2}^{2} = \left\| \Gzw_K - \Gzw_{K\opt} \right\|_{\mathcal{H}_2}^{2},\quad \forall K\in\Ks.
\end{equation}
\end{Thm}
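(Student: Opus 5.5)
The plan is to prove the Pythagorean identity~\eqref{eq:standard_H2_optimal_grad} directly. Both directions of the equivalence then follow quickly. \emph{Necessity} is the usual first-order condition: $\Ks$ is open (the spectral radius is continuous) and $v^{\St}$ is differentiable on $\Ks$ by Lemma~\ref{lem:standard_H2_grad}, so an interior minimizer has zero gradient. \emph{Sufficiency} is immediate from~\eqref{eq:standard_H2_optimal_grad}, whose right-hand side is nonnegative. Expanding the $\mathcal{H}_2$ inner product gives
\[
\|\Gzw_K\|_{\mathcal{H}_2}^2=\|\Gzw_{K\opt}\|_{\mathcal{H}_2}^2+\|\Gzw_K-\Gzw_{K\opt}\|_{\mathcal{H}_2}^2+2\langle \Gzw_K-\Gzw_{K\opt},\Gzw_{K\opt}\rangle_{\mathcal{H}_2}.
\]
So it suffices to show $\langle \Gzw_K,\Gzw_{K\opt}\rangle_{\mathcal{H}_2}=\|\Gzw_{K\opt}\|_{\mathcal{H}_2}^2$ for every $K\in\Ks$.

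\textbf{Step 1: translate stationarity.} By Lemma~\ref{lem:standard_H2_grad}, $\nabla_K v^{\St}(K\opt)=0$ means $S_{K\opt}\Wc^{\St}_{K\opt}=0$. By~\eqref{eq:contr_gram}, the range of $\Wc^{\St}_{K\opt}$ contains $\range(\Acl_{K\opt}^t\Bw)$ for every $t\in\N_0$, since $\Wc^{\St}_{K\opt}\succeq \Acl_{K\opt}^t\Bw\Bw\tr(\Acl_{K\opt}^t)\tr$. Hence
\[
S_{K\opt}\Acl_{K\opt}^t\Bw=0\quad\text{for all } t\in\N_0 .
\]
This is the only place stationarity enters, and it is exactly what replaces the nonsingular-Gramian assumption. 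Note that $S_{K\opt}$ itself need not vanish.

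\textbf{Step 2: cross Gramian.} By Parseval, as in~\eqref{eq:standard-H2-cost-markov}, $\langle \Gzw_K,\Gzw_{K\opt}\rangle_{\mathcal{H}_2}=\sum_t\Trc\big(\Bw\tr(\Acl_K^t)\tr\Ccl_K\tr\Ccl_{K\opt}\Acl_{K\opt}^t\Bw\big)=\langle I,\Bw\tr N\Bw\rangle$. Here $N\Let\sum_t(\Acl_K^t)\tr\Ccl_K\tr\Ccl_{K\opt}\Acl_{K\opt}^t$ converges because both closed-loop matrices are Schur, and it is the unique solution of the Stein equation $\Acl_K\tr N\Acl_{K\opt}-N+\Ccl_K\tr\Ccl_{K\opt}=0$.

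\textbf{Step 3: compare $N$ with $\Wo_{K\opt}$.} Set $\Delta K\Let K-K\opt$, so that $\Acl_K=\Acl_{K\opt}+\Bu\Delta K$ and $\Ccl_K=\Ccl_{K\opt}+D\Delta K$. Substituting these into the Lyapunov equation~\eqref{eq:gram_obs_lyap} for $\Wo_{K\opt}$ gives
\[
\Acl_K\tr\Wo_{K\opt}\Acl_{K\opt}-\Wo_{K\opt}+\Ccl_K\tr\Ccl_{K\opt}=\Delta K\tr S_{K\opt}.
\]
Therefore $Z\Lelt N-\Wo_{K\opt}$ solves $\Acl_K\tr Z\Acl_{K\opt}-Z+\Delta K\tr S_{K\opt}=0$, i.e.
\[
Z=\sum_t(\Acl_K^t)\tr\Delta K\tr S_{K\opt}\Acl_{K\opt}^t .
\]
Then $\Bw\tr Z\Bw=\sum_t \Bw\tr(\Acl_K^t)\tr\Delta K\tr\big(S_{K\opt}\Acl_{K\opt}^t\Bw\big)=0$ by Step~1. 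Consequently $\langle I,\Bw\tr N\Bw\rangle=\langle I,M_{K\opt}\rangle=\|\Gzw_{K\opt}\|_{\mathcal{H}_2}^2$, which yields the identity.

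The main obstacle is Step~1 together with the choice of the \emph{cross} Gramian in Step~2. The naive completion-of-squares route produces the term $\langle S_{K\opt},\Delta K\,\Wc^{\St}_K\rangle$, which involves the reachable subspace of $K$, not of $K\opt$, so stationarity cannot kill it. The cross Gramian instead places $S_{K\opt}$ next to powers of $\Acl_{K\opt}$ acting on $\Bw$, where stationarity applies.
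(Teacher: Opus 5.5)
Your proof is correct and follows essentially the same argument as the paper. Both reduce the Pythagorean identity to the orthogonality $\langle \Gzw_{K\opt},\Gzw_K-\Gzw_{K\opt}\rangle_{\mathcal{H}_2}=0$, and both obtain it from $S_{K\opt}\Acl_{K\opt}^t\Bw=0$ for all $t\in\N_0$ together with the Lyapunov equation for $\Wo_{K\opt}$. The paper runs a per-channel costate $p^\star_{t,j}=\Wo_{K\opt}x^\star_{t,j}$ through a telescoping sum, whereas you package the same computation as a Stein equation for the cross Gramian: the diagonal of your series for $\Bw\tr(N-\Wo_{K\opt})\Bw$ is exactly $\sum_t (S_{K\opt}x^\star_{t,j})\tr\Delta K\,x_{t,j}$, the term the paper's telescoping step eliminates. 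A minor bonus of your matrix form is that $\Bw\tr N\Bw=M_{K\opt}$ yields $M_K-M_{K\opt}\succeq 0$, i.e.\ Corollary~\ref{cor:obs_gram_minimal}, directly.
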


\begin{proof}
By Lemma~\ref{lem:standard_H2_grad}, $v^{\St}$ is real-analytic and hence continuously differentiable on $\Ks$. 
The necessity of stationarity then simply follows first-order condition. 
We next focus on the sufficiency of first-order condition for optimality. 
Hence, let $K\opt\in\Ks$ be such that $\nabla_K v^{\St}(K\opt) = 0$. 
By Lemma~\ref{lem:standard_H2_grad}, we thus have
\begin{equation}\label{eq:stationary_H2_SXzero}
    S_{K\opt}\Wc^{\St}_{K\opt}=0.
\end{equation}
Then, since $\Acl_{K\opt}$ is Schur, we can use the
convergent series representation~\eqref{eq:contr_gram} of the Gramian~$\Wc^{\St}_{K\opt}$, to write
\begin{align*}
    0 = S_{K\opt}\Wc^{\St}_{K\opt}S_{K\opt}\tr 
      = \sum_{t=0}^{\infty} S_{K\opt} \Acl_{K\opt}^t \Bw \Bw\tr (\Acl_{K\opt}\tr )^t S_{K\opt}\tr 
      = \sum_{t=0}^{\infty} \left( S_{K\opt} \Acl_{K\opt}^t \Bw \right)  \left(S_{K\opt} \Acl_{K\opt}^t \Bw\right)\tr.
\end{align*}
Every summand on the right-hand side of the preceding equation is positive semidefinite. 
Hence, every summand must be zero, which yields
\begin{equation}
\label{eq:stationary_H2_reachable_residual}
    S_{K\opt}\Acl_{K\opt}^t\Bw=0, \quad  \forall t\in\N_0.
\end{equation}
Thus, although $S_{K\opt}$ need not vanish on the entire state space,
it vanishes on the closed-loop disturbance-reachable subspace. 

Now, let $K\in\Ks$ be arbitrary. 
Also, let $e_j \in\R^p$, $j\in\{1,\ldots,p\}$, denote the canonical basis vectors
of the disturbance space. 
We now consider the disturbance impulse-response with state-feedback gains $K\opt$ and $K$, one at a time for a fixed $j$. 
To that end, we consider the deterministic trajectory (i.e., with $w_t=0$ for all $t\in\N_0$) produced by $K\opt$ and $K$ with initial state $x_0 = \Bw e_j$, by defining the two state, input, and performance sequences
\begin{align*}
    x_{t,j}^{\star} &\Let \Acl_{K\opt}^t\, \Bw\, e_j, &
    u_{t,j}^{\star} &\Let K\opt\, x_{t,j}^{\star}, &
    z_{t,j}^{\star} &\Let \Ccl_{K\opt}\, x_{t,j}^{\star}, \\
    x_{t,j} &\Let \Acl_K^t\, \Bw\, e_j, & 
    u_{t,j} &\Let K\, x_{t,j}, & 
    z_{t,j} &\Let \Ccl_K \, x_{t,j}.
\end{align*}
We also introduce the trajectory differences
\begin{equation*}
    \delta x_{t,j} \Let x_{t,j}-x_{t,j}^{\star}, \quad
    \delta u_{t,j} \Let u_{t,j}-u_{t,j}^{\star}, \quad
    \delta z_{t,j} \Let z_{t,j}-z_{t,j}^{\star}.
\end{equation*}
Since both trajectories have the same initial condition, we have $\delta x_{0,j}=0$. 
Moreover, since both trajectories satisfy the original dynamics, we get
\begin{equation} \label{eq:stationary_H2_difference_dynamics}
    \delta x_{t+1,j} = A\, \delta x_{t,j} + B\, \delta u_{t,j}, \quad
    \delta z_{t,j} =  C\, \delta x_{t,j} + D\, \delta u_{t,j}.
\end{equation}

We next construct the costate along the trajectory produced by $K\opt$. 
Define
\begin{equation*}
    p^{\star}_{t,j} \Let \Wo_{K\opt}\, x_{t,j}^{\star}.
\end{equation*}
We now derive two identities satisfied by this sequence. 
Multiplying the Lyapunov equation~\eqref{eq:gram_obs_lyap} (with $K=K\opt$) by $x_{t,j}^{\star}$ gives 
\begin{equation*}
p^{\star}_{t,j} = \Wo_{K\opt}\, x_{t,j}^{\star} = \Acl_{K\opt}\tr \Wo_{K\opt} \Acl_{K\opt}\, x_{t,j}^{\star}  + \Ccl_{K\opt}\tr \Ccl_{K\opt} \, x_{t,j}^{\star}.  
\end{equation*}
Then, using $ x_{t+1,j}^{\star} = \Acl_{K\opt}\, x_{t,j}^{\star}$ and $ z_{t,j}^{\star} =  \Ccl_{K\opt} \, x_{t,j}^{\star}$, we arrive at
\begin{equation*}
    p^{\star}_{t,j} = \Acl_{K\opt}\tr\, p^{\star}_{t+1,j} + \Ccl_{K\opt}\tr z_{t,j}^{\star}.
\end{equation*}
Expanding $\Acl_{K\opt}=A+BK\opt$ and $\Ccl_{K\opt}=C+DK\opt$, we then have
\begin{equation*} 
    p^{\star}_{t,j} = A\tr p^{\star}_{t+1,j} + C\tr z_{t,j}^{\star} + K\opt\tr (B\tr p^{\star}_{t+1,j} + D\tr z_{t,j}^{\star}).
\end{equation*}
On the other hand, using again $ x_{t+1,j}^{\star} = \Acl_{K\opt}\, x_{t,j}^{\star}$ and $ z_{t,j}^{\star} =  \Ccl_{K\opt} \, x_{t,j}^{\star}$, we obtain
\begin{align*}
    B\tr p^{\star}_{t+1,j} + D\tr z_{t,j}^{\star}  
    & = B\tr \Wo_{K\opt}\, \Acl_{K\opt}\, x_{t,j}^{\star} + D\tr \Ccl_{K\opt}\, x_{t,j}^{\star} 
     = \big( B\tr \Wo_{K\opt}\,\Acl_{K\opt}+D\tr \Ccl_{K\opt} \big)\, x_{t,j}^{\star} \\
    & = S_{K\opt}\, x_{t,j}^{\star}
     = S_{K\opt}\, \Acl_{K\opt}^t\, \Bw\, e_j
     = 0.
\end{align*}
By \eqref{eq:stationary_H2_reachable_residual}, it then follows that
\begin{align}\label{eq:stationary_H2_control_costate}
    B\tr p^{\star}_{t+1,j} + D\tr z_{t,j}^{\star}  = 0,
\end{align}
and as a result,
\begin{equation} \label{eq:stationary_H2_openloop_costate}
    p^{\star}_{t,j} = A\tr p^{\star}_{t+1,j} +  C\tr z_{t,j}^{\star}.
\end{equation}

With the construction above, we can now show that the stationary performance sequence~$(z_{t,j}^{\star})_{t=0}^{\infty}$ and the feasible performance variation sequence~$(\delta z_{t,j}^{\star})_{t=0}^{\infty}$ are orthogonal. 
Indeed, using~\eqref{eq:stationary_H2_difference_dynamics}, we have
\begin{align*}
    (z_{t,j}^{\star})\tr \delta z_{t,j} = (z_{t,j}^{\star})\tr ( C\, \delta x_{t,j}+D\, \delta u_{t,j} ) 
    =  \big(C\tr z_{t,j}^{\star} \big)\tr \delta x_{t,j} + \big( D\tr z_{t,j}^{\star} \big)\tr \delta u_{t,j}.
\end{align*}
Then, using~\eqref{eq:stationary_H2_control_costate} and~\eqref{eq:stationary_H2_openloop_costate}, we obtain
\begin{align*}
    (z_{t,j}^{\star})\tr \delta z_{t,j} &= \big(p^{\star}_{t,j}-A\tr p^{\star}_{t+1,j}\big)\tr \delta x_{t,j} - \big( B\tr p^{\star}_{t+1,j} \big)\tr \delta u_{t,j} \\
    &=  (p^{\star}_{t,j})\tr \delta x_{t,j} - (p^{\star}_{t+1,j})\tr \big(
        A\, \delta x_{t,j}+B\, \delta u_{t,j} \big). 
\end{align*}
Using the difference dynamics~\eqref{eq:stationary_H2_difference_dynamics}, we hence arrive at
\begin{align*}
    (z_{t,j}^{\star})\tr \delta z_{t,j}  =  (p^{\star}_{t,j})\tr \delta x_{t,j} - (p^{\star}_{t+1,j})\tr \delta x_{t+1,j}.
\end{align*}
Summing the preceding equation from $t=0$ to
$t=T$ gives
\begin{equation} \label{eq:stationary_H2_finite_orthogonality}
    \sum_{t=0}^{T} (z_{t,j}^{\star})\tr \delta z_{t,j} = (p^{\star}_{0,j})\tr \delta x_{0,j} - (p^{\star}_{T+1,j})\tr \delta x_{T+1,j}.
\end{equation}
The first term on the right-hand side is zero since $\delta x_{0,j}=0$. 
Also, since both $\Acl_{K\opt}$ and $\Acl_K$ are Schur stable, $x_{t,j}^{\star}$ and $x_{t,j}$ converge exponentially to zero as $t\to\infty$.
This, in turn, implies that $\delta x_{t,j} = x_{t,j}-x_{t,j}^{\star}$ and 
$p^{\star}_{t,j}=\Wo_{K\opt}\, x_{t,j}^{\star}$ converges exponentially to zero as $t\to\infty$. 
As a result, we have
\begin{equation*} 
\lim_{T\to\infty} (p^{\star}_{T+1,j})\tr \delta x_{T+1,j} = 0 .
\end{equation*}
Taking the limit in
\eqref{eq:stationary_H2_finite_orthogonality} therefore gives
\begin{equation} \label{eq:stationary_H2_orthogonality}
    \sum_{t=0}^{\infty}  (z_{t,j}^{\star})\tr \delta z_{t,j}
    =0.
\end{equation}
That is, the performance trajectory generated by $K\opt$ is orthogonal to its difference from the trajectory generated by any other stabilizing $K$. 
Now, observe that since $z_{t,j}=z_{t,j}^{\star}+\delta z_{t,j}$, we have
\begin{equation*}
    \|z_{t,j}\|_2^2 = \|z_{t,j}^{\star}\|_2^2 + 2 (z_{t,j}^{\star})\tr \delta z_{t,j} + \|\delta z_{t,j}\|_2^2.
\end{equation*}
Then, summing over $t\in\N_0$ and $j\in\{1,\ldots,p\}$ and using \eqref{eq:stationary_H2_orthogonality}, we arrive at
\begin{align}\label{eq:stationary_H2_channel_pythagorean}
    \sum_{j=1}^{p}\sum_{t=0}^{\infty}\|z_{t,j}\|_2^2 - \sum_{j=1}^{p}\sum_{t=0}^{\infty}\|z_{t,j}^{\star}\|_2^2 
    = \sum_{j=1}^{p}\sum_{t=0}^{\infty}\|z_{t,j} - z_{t,j}^{\star}\|_2^2.
\end{align}
Now, observe that
\begin{align*}
    \sum_{j=1}^{p}\sum_{t=0}^{\infty}\|z_{t,j}\|_2^2 
    = \sum_{j=1}^{p}\sum_{t=0}^{\infty}\|\Ccl_K\Acl_K^t\Bw e_j\|_2^2
    = \sum_{t=0}^{\infty}\|\Ccl_K\Acl_K^t\Bw\|_F^2.
\end{align*}
Therefore, using the representation~\eqref{eq:standard-H2-cost-markov} of the standard $\mathcal{H}_2$ cost, the equality~\eqref{eq:stationary_H2_channel_pythagorean} can be written as
\begin{align*}
    \left\| \Gzw_K \right\|_{\mathcal{H}_2}^{2} - \left\| \Gzw_{K\opt} \right\|_{\mathcal{H}_2}^{2} = \left\| \Gzw_K - \Gzw_{K\opt} \right\|_{\mathcal{H}_2}^{2} \geq 0.
\end{align*}
Then, since $K\in\Ks$ was arbitrary, $K\opt$ is globally optimal.
This concludes the proof.
\end{proof} 

The proof of the preceding result is inspired by the completion-of-squares treatment of singular discrete-time LQR and $\mathcal{H}_2$ problems in~\cite{trentelman1995sampled}, 
combined with the Lyapunov policy-gradient representation used for modern LQR policy
optimization in, e.g.,~\cite{FazelGeKakadeMesbahi2018}. 
Let us note that the existing gradient-domination bounds in 
\cite[Lem.~3 and Cor.~4]{FazelGeKakadeMesbahi2018} and \cite[Thm.~3]{watanabe2026gradient}
imply global optimality of stationary policies under the regular LQR conditions in Remark~\ref{Rem:standrad_H2_optimal_suff} and when the disturbance controllability Gramian~$\Wc_{K\opt}^{\St}$ is \emph{nonsingular}. 
Theorem~\ref{thm:standard_H2_optimal_grad} however does not impose any conditions and, in particular, allows for $\Wc_{K\opt}^{\St}$ to be singular.  
The main additional observation is that stationarity of $K\opt$ implies that the residual $S_{K\opt}$ vanishes on the closed-loop disturbance-reachable subspace (that is, $\range(\Wc_{K\opt}^{\St})$), even when it need not vanish on the entire state space. 
This leads to the exact Pythagorean performance-difference identity~\eqref{eq:standard_H2_optimal_grad} without assuming a positive-definite controllability Gramian. 
Theorem~\ref{thm:standard_H2_optimal_grad} also provides us with the following instrumental property of standard $\mathcal{H}_2$-optimal solution.

\begin{Cor}[Minimality of $M_{K\opt}$]\label{cor:obs_gram_minimal}
Let the stabilizing gain $K\opt\in\Ks$ be a solution of the standard $\mathcal{H}_2$ problem~\eqref{eq:standard_H2_problem}.  
Then,
\begin{equation*}
    M_{K\opt} \preceq M_{K}, \quad \forall K\in\Ks.
\end{equation*}   
\end{Cor}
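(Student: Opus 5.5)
We need $v^\top M_{K}v \ge v^\top M_{K\opt} v$ for every $v\in\R^p$. Note $v^\top M_K v = \sum_t \|\Ccl_K \Acl_K^t \Bw v\|_2^2$, the output energy of the impulse response from initial state $x_0=\Bw v$. So we will rerun the proof of Theorem~\ref{thm:standard_H2_optimal_grad} with the canonical vector $e_j$ replaced by an arbitrary $v\in\R^p$.

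First, since $K\opt$ solves~\eqref{eq:standard_H2_problem} and lies in the open set $\Ks$, Theorem~\ref{thm:standard_H2_optimal_grad} (necessity) gives $\nabla_K v^{\St}(K\opt)=0$. As in that proof, this yields~\eqref{eq:stationary_H2_reachable_residual}, i.e., $S_{K\opt}\Acl_{K\opt}^t\Bw=0$ for all $t$. Consequently $S_{K\opt}\Acl_{K\opt}^t\Bw v=0$ for every $v$, not merely for the basis vectors.

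Second, for fixed $v$ and arbitrary $K\in\Ks$, define the trajectories $x^\star_t=\Acl_{K\opt}^t\Bw v$ and $x_t=\Acl_K^t\Bw v$, together with the corresponding inputs, outputs and differences. Set the costate $p^\star_t=\Wo_{K\opt}x^\star_t$. The derivation of~\eqref{eq:stationary_H2_control_costate}--\eqref{eq:stationary_H2_openloop_costate} applies verbatim, since it used only the residual identity along $x^\star_t$. The telescoping argument therefore gives $\sum_t (z^\star_t)^\top\delta z_t=0$, and hence
\begin{equation*}
v^\top M_K v - v^\top M_{K\opt} v=\sum_{t=0}^\infty\|z_t-z^\star_t\|_2^2\ge 0 .
\end{equation*}
This step uses the identity $v^\top \Bw^\top \Wo_K\Bw v=\sum_t\|\Ccl_K\Acl_K^t\Bw v\|_2^2$, which follows from the series~\eqref{eq:gram_obs}.

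Since $v$ was arbitrary, $M_K-M_{K\opt}\succeq0$. The only point needing care is that the orthogonality argument in Theorem~\ref{thm:standard_H2_optimal_grad} is genuinely per-initial-condition and does not rely on summing over $j$. Inspection shows that summation over $j$ entered only at the final step, so there is no real obstacle.
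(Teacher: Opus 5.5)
Your proposal is correct and is essentially the paper's argument. The paper notes that $S_{K\opt}\Acl_{K\opt}^t\Bw=0$ makes $K\opt$ stationary for the modified disturbance matrix $\tilde{\Bw}=\Bw y$, and then invokes Theorem~\ref{thm:standard_H2_optimal_grad} as a black box to get $y\tr(M_K-M_{K\opt})y=\|(\Gzw_K-\Gzw_{K\opt})y\|_{\mathcal{H}_2}^2\geq 0$; you instead rerun the per-initial-condition orthogonality argument with $x_0=\Bw v$, which yields the same identity.
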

\begin{proof}
Optimality of $K\opt$ for the standard $\mathcal{H}_2$ problem~\eqref{eq:standard_H2_problem} implies that (see~\eqref{eq:stationary_H2_reachable_residual} in the proof of Theorem~\ref{thm:standard_H2_optimal_grad})
\[
 S_{K_\star}A_{K_\star}^{t}E=0,\qquad t\in\N_0.
\]
For any $y\in\mathbb R^p$, this also implies stationarity of
$K_\star$ for the standard $\mathcal H_2$ cost with the modified disturbance
matrix~$\tilde{E}=Ey$.
Applying Theorem~\ref{thm:standard_H2_optimal_grad} to this modified system, we obtain
\[
 y^\top(M_K-M_{K_\star})y
 =\left\|(G_K-G_{K_\star})y\right\|_{\mathcal H_2}^{2}\geq0, \qquad \forall K\in\Ks.
\]
Since $y$ is arbitrary, $M_{K_\star}\preceq M_K$ for every $K\in\Ks$.
\end{proof}

The preceding result can be viewed as a matrix-valued strengthening of standard $\mathcal{H}_2$ optimality. Indeed, the latter directly gives only $\Trc(M_{K\opt})\leq \Trc(M_K)$,
whereas Corollary~\ref{cor:obs_gram_minimal} establishes the stronger Loewner-order relation $M_{K\opt}\preceq M_K$ for all $K\in\Ks$. 
In this regard, we note that the LQ-optimal value matrix $\Wo_{\mathrm{LQ}}$ in~\eqref{eq:LQ_value_matrix} provides the common lower bound $\Wo_{\mathrm{LQ}}\preceq\Wo_K$, which, after compression through the disturbance matrix $E$, gives $M_{\mathrm{LQ}}\Let E\tr\Wo_{\mathrm{LQ}}\, E \preceq M_K$ for all $K\in\Ks$. 
The attainment of the standard $\mathcal{H}_2$ optimum then implies $M_{K\opt}  =  M_{\mathrm{LQ}} = E\tr\Wo_{\mathrm{LQ}}\, E$, however, it does not require $\Wo_{K_\star}=\Wo_{\mathrm{LQ}}$.
When the disturbance matrix $E$ is rank deficient, the standard $\mathcal{H}_2$ cost only measures the closed-loop performance along state directions excited through $E$. 
Hence, an $\mathcal{H}_2$-optimal gain need not be LQ-optimal for every initial state and may therefore satisfy $\Wo_{K\opt}\neq\Wo_{\mathrm{LQ}}$. 
Let us illustrate this through an example.

\begin{Ex}[Singular disturbance controllability Gramian and nonunique optima]
\label{ex:singular_optima}
Consider
\[
 A=\begin{bmatrix}0&0\\0&2\end{bmatrix},\qquad
 B=\begin{bmatrix}0\\1\end{bmatrix},\qquad
 E=\begin{bmatrix}1\\0\end{bmatrix},\qquad
 C=\begin{bmatrix}1&0\\0&1\end{bmatrix},\qquad 
 D=\begin{bmatrix}0\\0\end{bmatrix}.
\]
Let $K=[\,k_1\ k_2\,]$ and set $\beta=2+k_2$.
Then, $K\in\Ks$ if and only if $|\beta|<1$, and $v^{\St}(K)=1+\frac{k_1^2}{1-\beta^2}$. 
Thus, every $K_\beta=[\,0\ \beta-2\,]$ with $|\beta|<1$ is stationary and globally optimal for the standard $\mathcal{H}_2$ problem.
At these gains, $\Wc^{\St}_{K_\beta}=\operatorname{diag}(1,0)$, $\Wo_{K_\beta}=\operatorname{diag}\big(1,\frac{1}{1-\beta^2}\big)$, and $S_{K_\beta}= \big[\,0\;\; \frac{\beta}{1-\beta^2}\,\big]$. 
In particular, $S_{K_\beta}\ne0$ for $\beta\ne0$.
The LQ-optimal value matrix is $\Wo_{\mathrm{LQ}}=I_2$, attained by the gain $K_{\mathrm{LQ}}=[\,0\;\; {-2}\,]$. 
Observe that, for $\beta\ne0$, $\Wo_{K_\beta}\ne \Wo_{\mathrm{LQ}}$, however, $M_{K_\beta} = M_{K_{\mathrm{LQ}}} = 1$. 
This illustrates both stationarity sufficiency with a singular
disturbance Gramian and optimality only after compression.
\end{Ex}

\section{Distributionally robust $\mathcal{H}_2$ synthesis}
\label{sec:DR-H2}

In this section, we formulate and study distributionally robust (DR) $\mathcal{H}_2$ synthesis by considering i.i.d.\ zero-mean disturbances with a generic covariance. 
We again analyze the regularity properties of the corresponding cost and show the \emph{sufficiency} of the first-order stationarity condition for optimality.
More importantly, we show that the standard $\mathcal{H}_2$ synthesis is inherently distributionally robust against this class of disturbances. 
The analysis is in particular enabled by the covariance representation of the performance cost of $\mathcal{H}_2$~synthesis, combined with the Loewner minimality in
Corollary~\ref{cor:obs_gram_minimal}. 

\subsection{Generalized $\mathcal{H}_2$ cost with arbitrary disturbance covariance}

The standard $\mathcal{H}_2$ cost can be alternatively expressed as the steady-state power of the performance signal under zero initial state and i.i.d.~white noise disturbance with zero mean and unit covariance, that is, 
\begin{equation*}
    v^{\St}(K) =  \lim_{t\ra\infty} \EE \left[\| z_t \|_2^2 \;\middle|\; x_0 = 0,\; w_i\overset{\text{i.i.d}}{\sim}\mathcal{N}(0,I) \right], \quad \forall K\in\Ks.
\end{equation*}
We now generalize the standard cost above by allowing the disturbance to have a generic zero-mean distribution with finite covariance. 
To that end, let us define the corresponding set of distributions
\begin{equation*}
    \PS \Let \{\Pw \in \mathcal{B}(\R^p) \;:\; \mu_{\Pw} = 0,\; \|\Sp\|_{F}<\infty\},
\end{equation*}
and the generalized $\mathcal{H}_2$ cost functional
\begin{equation}\label{eq:general_H2_cost}
    v(K,\Pw) \;\Let\;  \lim_{t\ra\infty} \EE \left[\| z_t \|_2^2 \;\middle|\; x_0 = 0,\; w_i\overset{\text{i.i.d}}{\sim}\Pw \right], \quad \forall K\in\Ks,\, \forall P\in\PS.
\end{equation}
Similar to the standard cost, the generalized cost can be computed algebraically using the observability Gramian.

\begin{Lem}[Generalized cost $v$]\label{lem: general_H2_cost}
We have
\begin{align*}
    v(K, \Pw) = \langle \Sp , M_K \rangle, \quad \forall K\in\Ks,\, \forall P\in\PS.
\end{align*}
\end{Lem}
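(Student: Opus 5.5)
The plan is to compute $\EE[\|z_t\|_2^2]$ explicitly for finite $t$ by unrolling the closed-loop recursion~\eqref{eq:closed-loop-dyn}, and then pass to the limit. With $x_0=0$ we have
\[
x_t=\sum_{i=0}^{t-1}\Acl_K^{\,t-1-i}\Bw w_i,
\qquad
z_t=\Ccl_K x_t=\sum_{s=0}^{t-1}\Ccl_K\Acl_K^{\,s}\Bw\, w_{t-1-s}.
\]
Expanding $\|z_t\|_2^2$ gives a double sum of terms $w_{t-1-s}\tr \Bw\tr(\Acl_K^{s})\tr\Ccl_K\tr\Ccl_K\Acl_K^{r}\Bw\, w_{t-1-r}$. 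Each term is integrable because $\|\Sp\|_F<\infty$; by Cauchy--Schwarz, $\EE|w_i\tr N w_k|\le \|N\|\,\Trc(\Sp)$.

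Next I take expectations term by term. The $w_i$ are i.i.d.\ with zero mean, so for $s\neq r$ independence gives
\[
\EE[w_{t-1-s}\tr N w_{t-1-r}]=\mu_{\Pw}\tr N\mu_{\Pw}=0.
\]
For $s=r$ we get $\EE[w\tr N w]=\langle \Sp, N\rangle$. Hence
\[
\EE\|z_t\|_2^2=\Big\langle \Sp,\ \Bw\tr\Big(\sum_{s=0}^{t-1}(\Acl_K^{s})\tr\Ccl_K\tr\Ccl_K\Acl_K^{s}\Big)\Bw\Big\rangle .
\]
The inner sum is the partial sum of the series~\eqref{eq:gram_obs} for $\Wo_K$.

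To finish, I use that $K\in\Ks$, so $\Acl_K$ is Schur. Then $\|\Acl_K^s\|\le c\gamma^s$ for some $\gamma<1$, and the partial sums converge (monotonically in the Loewner order) to $\Wo_K$. The map $W\mapsto\langle\Sp,\Bw\tr W\Bw\rangle$ is linear and hence continuous, so the limit in~\eqref{eq:general_H2_cost} exists and equals $\langle\Sp, M_K\rangle$ by the definition~\eqref{eq:M_k}.

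The only delicate point, and so the main obstacle, is justifying the cross-term cancellation. This rests on independence plus integrability of the products $w_i\tr N w_k$, which the finite second-moment assumption in $\PS$ supplies. Nothing beyond second moments is used, so the formula holds for every $\Pw\in\PS$, Gaussian or not. As a consistency check, taking $\Sp=I$ recovers~\eqref{eq:standard-H2-cost}.
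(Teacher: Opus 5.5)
Your proposal is correct and follows essentially the same route as the paper. Both arguments unroll the closed-loop recursion from $x_0=0$, eliminate the cross terms by independence and zero mean, identify the diagonal terms as $\langle \Sp,\cdot\rangle$ applied to the partial sums of the observability Gramian series, and pass to the limit using that $\Acl_K$ is Schur. Your version is slightly more careful than the paper's. You keep the exact time indexing $w_{t-1-s}$, whereas the paper pairs $\Acl_K^i$ with $w_i$, which holds only in distribution. You also explicitly justify integrability of the cross terms and convergence of the partial sums.
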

\begin{proof}
For the closed-loop system~\eqref{eq:closed-loop} with $x_0 = 0$ and driven by the disturbance $w_t$, we have
\[
z_{t+1} = \ssum_{i=0}^t \Ccl_K \Acl_K^i \Bw w_i, \quad \forall t\in\N_0.  
\]
Then, we can write for all $t\geq 1$,
\begin{align*}
    \EE \left[\| z_t \|_2^2 \;\middle|\; x_0 = 0 \right] = \EE \left[\left\| \ssum_{i=0}^{t-1} \Ccl_K \Acl_K^i \Bw w_i \right\|_2^2 \right].
\end{align*}
Now, we use the properties of the disturbance. 
First, using linearity of expectation and the fact that $w_i$'s are independent and zero-mean, we arrive at
\begin{align*}
   \EE \left[\left\| \ssum_{i=0}^{t-1} \Ccl_K \Acl_K^i \Bw w_i \right\|_2^2 \right] 
    = \sum_{i=0}^{t-1}  \EE \left[\left\| \Ccl_K \Acl_K^i \Bw w_i \right\|_2^2 \right].
\end{align*}
Next, using the identity $y\tr M y=\langle yy\tr, M\rangle$ for vector $y$ and matrix $M$, 
and the fact that $w_i$'s are identically distributed with distribution $\Pw$, we have  ($H_{K,i} \Let \Bw\tr (\Acl_K^i)\tr \Ccl_K\tr \Ccl_K \Acl_K^i \Bw$)
\begin{align*}
    \sum_{i=0}^{t-1} \EE \left[\left\| \Ccl_K  \Acl_K^i \Bw w_i \right\|_2^2 \right] &= \sum_{i=0}^{t-1} \EE_{\Pw} \left[w_i\tr H_{K,i} w_i  \right] 
     = \sum_{i=0}^{t-1} \EE_{\Pw} \left[\langle w_i w_i\tr , H_{K,i} \rangle\right] 
    =  \left\langle \Sp ,  \ssum_{i=0}^{t-1}H_{K,i} \right\rangle.
\end{align*}
Finally, using the definition of the cost in~\eqref{eq:general_H2_cost} and the observability Gramian in~\eqref{eq:gram_obs}, we obtain
\begin{align*}
    v(K,\Pw) = \lim_{t\ra\infty} \left\langle \Sp , \Bw\tr \left(\ssum_{i=0}^{t-1}(\Acl_K^i)\tr \Ccl_K\tr \Ccl_K \Acl_K^i\right) \Bw \right\rangle = \langle \Sp ,\Bw\tr \Wo_K \Bw \rangle.
\end{align*}
This concludes the proof.
\end{proof} 

Lemma~\ref{lem: general_H2_cost} reveals several key insights. 
Most importantly, by expressing the cost as the Frobenius inner product~$ \langle \Sp , M_K\rangle$, it separates the effect of disturbance statistics $\Sp$ from the dynamics encoded in $M_K$, which is a key element in the subsequent distributional robustness analysis. 
Moreover, it shows that the cost~$v(\cdot,\Pw)$ generalizes the standard cost $v^{\St}$ synthesis by allowing a generic covariance~$\Sp$ for the disturbance. 
In particular, given $\Pw\in\PS$, let $\tilde{v}^{\St}$ be the standard cost for the dynamics matrices $(A,B,C,D,\tilde{\Bw})$ with $\tilde{\Bw} = \Bw \Sp^{1/2}$, that is, 
\begin{equation*}
    \tilde{v}^{\St}(K) = \sum_{i=0}^\infty \| \Ccl_K \Acl_K^i \Bw \Sp^{1/2} \|_{F}^2.
\end{equation*}
Then, using the representation~\eqref{eq:standard-H2-cost-M_k} for the standard cost, we have
\begin{equation}\label{eq:standard_general_equiv}
    v(K,\Pw) = \langle \Sp , M_K \rangle = \langle \Sp , \Bw\tr \Wo_K \Bw \rangle = \big\langle I , (\Sp^{1/2}\Bw)\tr \Wo_K (\Bw\Sp^{1/2}) \big\rangle = \tilde{v}^{\St}(K), \quad \forall K\in\Ks.
\end{equation}
This, in turn, implies that the results of the previous section for the standard $\mathcal{H}_2$ cost and problem can be extended to the generalized $\mathcal{H}_2$ cost~\eqref{eq:general_H2_cost} and the corresponding problem~$\inf_{K\in\Ks} v(K,\Pw)$ for every $\Pw\in\PS$, by replacing the disturbance matrix~$\Bw$ with $\Bw \Sp^{1/2}$. 
In particular, we have the following results on smoothness and gradient of the generalized cost and the sufficiency of stationarity for optimality of stabilizing gains for the generalized cost. 

\begin{Cor}[Regularity of $v$] \label{cor:general_H2_grad}
Let $\Pw\in\PS$. 
The nominal cost~$v(\cdot,\Pw)$ is real-analytic on $\Ks$. Also,
\begin{align*}
    \nabla_K v(K,\Pw) = 2\; S_K\; \Wc_K^{\Pw}, \quad \forall K\in\Ks, 
\end{align*}
where $\Wc_K^{\Pw} \Let \Wc_K(\Sp)$ is the solution to the Lyapunov equation~\eqref{eq:Lyap_eq_control} with $\Sigma = \Sp$.
\end{Cor}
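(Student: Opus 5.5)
The plan is to reduce the statement to Lemma~\ref{lem:standard_H2_grad} through the identity~\eqref{eq:standard_general_equiv}. Fix $\Pw\in\PS$ and set $\tilde{\Bw}\Let\Bw\Sp^{1/2}$. This is well defined because $\Sp\succeq 0$: it is a second moment, and it is finite since $\|\Sp\|_F<\infty$. By~\eqref{eq:standard_general_equiv}, $v(K,\Pw)=\tilde v^{\St}(K)$ for every $K\in\Ks$. Here $\tilde v^{\St}$ is the standard $\mathcal{H}_2$ cost of the system $(A,\Bu,C,D,\tilde{\Bw})$.

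The reduction is legitimate because the feasible set is unchanged. The set $\Ks$ depends only on $(A,\Bu)$, so it is the same for both systems. The cost functions are therefore identical as functions on the same open domain.

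Next I apply Lemma~\ref{lem:standard_H2_grad} to the modified system. Its proof never uses any property of $\Bw$ beyond its dimensions, so it applies verbatim. This gives real-analyticity of $\tilde v^{\St}$, and hence of $v(\cdot,\Pw)$, on $\Ks$. It also gives
\begin{equation*}
\nabla_K v(K,\Pw)=2\,\tilde S_K\,\tilde{\Wc}^{\St}_K .
\end{equation*}

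It remains to identify the two factors. The residual $S_K=\Bu\tr\Wo_K\Acl_K+D\tr\Ccl_K$ involves only $\Wo_K$, $\Acl_K$ and $\Ccl_K$. The Gramian $\Wo_K$ is defined by~\eqref{eq:gram_obs} independently of the disturbance matrix, so $\tilde S_K=S_K$. The controllability Gramian of $(\Acl_K,\tilde{\Bw})$ is
\begin{equation*}
\sum_t \Acl_K^t\Bw\Sp^{1/2}\Sp^{1/2}\Bw\tr(\Acl_K^t)\tr=\Wc_K(\Sp)=\Wc_K^{\Pw},
\end{equation*}
which follows directly from~\eqref{eq:contr_gram}. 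Equivalently, it is the unique solution of~\eqref{eq:Lyap_eq_control} with $\Sigma=\Sp$, and uniqueness holds because $\Acl_K$ is Schur.

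No step is a real obstacle. The only point needing care is the observation that Lemma~\ref{lem:standard_H2_grad} does not depend on the specific disturbance matrix, nor on any rank or nonsingularity property of it. As an alternative, one could repeat the differential computation directly. Start from $dv=\langle \Bw\Sp\Bw\tr, d\Wo_K\rangle$ and substitute $\Bw\Sp\Bw\tr=-\LC_{\Acl_K}(\Wc_K^{\Pw})$; the argument then follows the proof of Lemma~\ref{lem:standard_H2_grad} line by line.
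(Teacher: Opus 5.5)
Your proposal is correct and follows the paper's proof, which is the one-line reduction to Lemma~\ref{lem:standard_H2_grad} via the equivalence~\eqref{eq:standard_general_equiv} with disturbance matrix $\Bw\Sp^{1/2}$. You also make explicit the two identifications the paper leaves implicit: $S_K$ does not depend on the disturbance matrix, and the controllability Gramian of $(\Acl_K,\Bw\Sp^{1/2})$ equals $\Wc_K(\Sp)$.
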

\begin{proof} 
The result follows from Lemma~\ref{lem:standard_H2_grad} and the equivalence~\eqref{eq:standard_general_equiv}.
\end{proof}

\begin{Cor}[Stationarity and optimality for $v$]\label{cor:general_H2_optimal_grad} 
Let $\Pw\in\PS$. 
We have $K\opt \in \argmin_{K\in\Ks} v(K,\Pw)$ if and only if $\nabla_K v(K\opt,\Pw) = 0$. 
In fact, if $\nabla_K v(K\opt,\Pw) = 0$, then
\begin{equation*}\label{eq:gen_H2_optimal_grad}
     v(K,\Pw) -  v(K\opt,\Pw) = \left\| (\Gzw_K - \Gzw_{K\opt}) \Sp^{1/2} \right\|_{\mathcal{H}_2}^{2},\quad \forall K\in\Ks.
\end{equation*}
\end{Cor}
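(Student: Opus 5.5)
The plan is to reduce Corollary~\ref{cor:general_H2_optimal_grad} to Theorem~\ref{thm:standard_H2_optimal_grad} applied to the modified plant $(A,B,C,D,\tilde{\Bw})$ with $\tilde{\Bw}\Let \Bw\Sp^{1/2}$. Fix $\Pw\in\PS$. Then $\Sp\succeq0$ is finite, so $\Sp^{1/2}$ is well defined. Note that $\Ks$ depends only on $(A,\Bu)$, so the feasible set is unchanged. Also $\Ccl_K$, $\Acl_K$, $\Wo_K$ and $S_K$ do not involve the disturbance matrix, so they are identical for the two plants. By the equivalence~\eqref{eq:standard_general_equiv}, $v(K,\Pw)=\tilde v^{\St}(K)$ for every $K\in\Ks$. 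Hence the two problems have the same minimizers, and $\nabla_K v(K,\Pw)=\nabla_K\tilde v^{\St}(K)$ on the open set $\Ks$.

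Next I would confirm that the gradients agree at the level of formulas, so that stationarity of one problem is literally stationarity of the other. Lemma~\ref{lem:standard_H2_grad} applied to the modified plant gives $\nabla_K\tilde v^{\St}(K)=2S_K\tilde{\Wc}_K$, where $\tilde{\Wc}_K$ is the controllability Gramian of $(\Acl_K,\Bw\Sp^{1/2})$. Its series is
\[
\sum_t\Acl_K^t\Bw\Sp^{1/2}\Sp^{1/2}\Bw\tr(\Acl_K^t)\tr=\Wc_K(\Sp)=\Wc_K^{\Pw},
\]
which matches Corollary~\ref{cor:general_H2_grad}. So the condition $\nabla_K v(K\opt,\Pw)=0$ coincides with the stationarity hypothesis of Theorem~\ref{thm:standard_H2_optimal_grad} for the modified plant. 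Theorem~\ref{thm:standard_H2_optimal_grad} imposes no conditions on the disturbance matrix, so a singular $\Sp$ (and hence a singular $\Wc_K^{\Pw}$) is harmless. The ``if and only if'' then transfers directly.

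For the identity, Theorem~\ref{thm:standard_H2_optimal_grad} gives $\|\tilde\Gzw_K\|^2_{\mathcal H_2}-\|\tilde\Gzw_{K\opt}\|^2_{\mathcal H_2}=\|\tilde\Gzw_K-\tilde\Gzw_{K\opt}\|^2_{\mathcal H_2}$, where
\[
\tilde\Gzw_K(z)=\Ccl_K(zI-\Acl_K)^{-1}\Bw\Sp^{1/2}=\Gzw_K(z)\Sp^{1/2}.
\]
The left side equals $v(K,\Pw)-v(K\opt,\Pw)$ by \eqref{eq:standard-H2-cost-markov} and \eqref{eq:standard_general_equiv}. The right side is $\|(\Gzw_K-\Gzw_{K\opt})\Sp^{1/2}\|^2_{\mathcal H_2}$ because right multiplication by the constant $\Sp^{1/2}$ commutes with taking differences.

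I do not expect a real obstacle. The only point needing care is bookkeeping: every $E$-independent object ($\Ks$, $\Wo_K$, $S_K$) must be genuinely unchanged under the substitution, and the gradient identification must be an equality of functions on $\Ks$ rather than just a formal one. Both are immediate from the definitions.
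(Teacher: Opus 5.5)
Your proposal is correct and matches the paper's argument: the paper proves this corollary in one line by applying Theorem~\ref{thm:standard_H2_optimal_grad} to the plant with disturbance matrix $\Bw\Sp^{1/2}$ through the equivalence~\eqref{eq:standard_general_equiv}. You spell out the bookkeeping the paper leaves implicit. That is, $\Ks$, $\Wo_K$ and $S_K$ are unchanged, the modified controllability Gramian is $\Wc_K(\Sp)$, and the modified transfer function is $\Gzw_K\Sp^{1/2}$.
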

\begin{proof} 
The result follows from Theorem~\ref{thm:standard_H2_optimal_grad} and the equivalence~\eqref{eq:standard_general_equiv}.
\end{proof}

\subsection{DR $\mathcal{H}_2$ cost}

With the generalized $\mathcal{H}_2$ cost~\eqref{eq:general_H2_cost} at hand, we can now formulate the DR version of $\mathcal{H}_2$ problem. 
Given an ambiguity set $\D$, we define the DR  $\mathcal{H}_2$ synthesis by
\begin{equation}\label{eq:dr_H2_problem}
    v^{\DR}\opt := \inf_{K \in \Ks}\, \left\{ v^{\DR}(K) \Let \sup_{\Pw \in \D} v(K, \Pw) \right\}.
\end{equation}
Throughout this section, we assume that the ambiguity set $\D$ has the following properties.

\begin{As}[Ambiguity]\label{as:ambiguity}
    $\D$ is a non-empty subset of $\PS$ and independent of the gain~$K$. 
    Moreover, $\ \sup_{\Pw\in\D} \|\Sp\|_F < \infty$. 
\end{As}

We next consider the regularity properties of the DR cost $v^{\DR}$. 
To characterize the regularity of the DR cost, it is useful to introduce
the set of covariance matrices generated by the ambiguity set,
\begin{equation*}
    \mathfrak{C} \Let \left\{\Sp \;:\; \Pw\in\D \right\},
\end{equation*}
and its closed convex hull
\begin{equation*}
    \mathcal{C} \Let \overline{\operatorname{conv}} \big(\mathfrak{C}\big).
\end{equation*}
Under Assumption~\ref{as:ambiguity}, the set
$\mathcal{C}$ is a nonempty compact subset of the set of $p$-by-$p$ positive semi-definite matrices. 
Moreover, since the generalized cost is linear in the covariance
matrix, Lemma~\ref{lem: general_H2_cost} gives
\begin{align}    
    v^{\DR}(K) = \sup_{\Pw\in\D}\; \langle \Sp,M_K\rangle 
    = \sup_{\Sigma\in\mathfrak{C}}\; \langle \Sigma,M_K\rangle 
    = \max_{\Sigma\in\mathcal{C}}\; \langle \Sigma,M_K\rangle, 
    \quad \forall K\in\Ks. \label{eq:DR_cost_covariance_representation}
\end{align}
Thus, although the supremum in the definition of $v^{\DR}$
need not be attained by a distribution in $\D$, the equivalent
maximization problem~\eqref{eq:DR_cost_covariance_representation}
is always attained at the covariance level in $\mathcal{C}$. 
For $K\in\Ks$, define the corresponding set of active worst-case covariances by
\begin{equation*}
    \mathcal{C}_{K} \Let \argmax_{\Sigma\in\mathcal{C}}\; \langle \Sigma,M_K\rangle.
\end{equation*}
The set $\mathcal{C}_{K}$ is a nonempty compact convex set, namely, an exposed face of $\mathcal{C}$. 
We can now characterize the regularity and the Clarke subdifferential of the DR cost. 
We recall that $ \Wc_K(\Sigma)$ is the solution to the Lyapunov equation~\eqref{eq:Lyap_eq_control}.

\begin{Prop}[Regularity of $v^{\DR}$]\label{prop:DR_H2_Clarke}
Let Assumption~\ref{as:ambiguity} hold. 
The DR cost $v^{\DR}$ is finite, locally Lipschitz continuous, and Clarke regular on $\Ks$. 
Moreover, the Clarke subdifferential of $v^{\DR}$ is
\begin{equation}\label{eq:DR_Clarke_subdiff}
    \partial_{\rm C}v^{\DR}(K) = \left\{ 2\;S_K\;\Wc_K(\Sigma_{K}) \;:\; \Sigma_{K}\in\mathcal{C}_{K} \right\},  \quad \forall K\in\Ks.
\end{equation}
In particular, if $\mathcal{C}_{K}=\{\Sigma_K\}$ is a singleton, then $v^{\DR}$ is differentiable at $K$ and
\begin{equation}\label{eq:DR_gradient_unique_covariance}
    \nabla_K v^{\DR}(K) = 2\;S_K\;\Wc_K(\Sigma_K).
\end{equation}
\end{Prop}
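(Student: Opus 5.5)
The plan is to treat $v^{\DR}$ as a pointwise maximum of smooth functions over a compact index set and apply Danskin's theorem in its Clarke form. By \eqref{eq:DR_cost_covariance_representation}, $v^{\DR}(K)=\max_{\Sigma\in\mathcal{C}} g(K,\Sigma)$ with $g(K,\Sigma)\Let\langle\Sigma,M_K\rangle$. The index set $\mathcal{C}$ is compact and independent of $K$ by Assumption~\ref{as:ambiguity}. Finiteness is then immediate: $M_K$ is a finite matrix for each $K\in\Ks$, so $v^{\DR}(K)\le \max_{\Sigma\in\mathcal{C}}\|\Sigma\|_F\,\|M_K\|_F<\infty$.

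\textbf{Step 1: joint regularity of $g$.} For each fixed $\Sigma\in\mathcal{C}$, I will use \eqref{eq:standard_general_equiv} with $\Sigma^{1/2}$ in place of $\Sp^{1/2}$ and Corollary~\ref{cor:general_H2_grad}. These give that $g(\cdot,\Sigma)$ is real-analytic on the open set $\Ks$ with $\nabla_K g(K,\Sigma)=2S_K\Wc_K(\Sigma)$. Both $S_K$ and $\Wc_K(\Sigma)$ depend continuously on $(K,\Sigma)$: $\Wc_K(\Sigma)$ is linear in $\Sigma$ and is given by the Schur-convergent series \eqref{eq:contr_gram}. Hence $\nabla_K g$ is jointly continuous on $\Ks\times\mathcal{C}$.

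\textbf{Step 2: Lipschitz continuity.} Fix a compact neighborhood $\mathcal{U}\subset\Ks$ of $K$. By joint continuity and compactness of $\mathcal{U}\times\mathcal{C}$, $\|\nabla_K g\|_F$ is uniformly bounded by some $L$. Each $g(\cdot,\Sigma)$ is therefore $L$-Lipschitz on convex balls inside $\mathcal{U}$, and a supremum of uniformly Lipschitz functions is Lipschitz with the same constant.

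\textbf{Step 3: Danskin.} I will invoke the standard result for pointwise maxima (e.g., Clarke's theorem on max functions, Thm.~2.8.2 in Clarke's \emph{Optimization and Nonsmooth Analysis}, which requires exactly compactness of the index set, joint continuity of $g$, and joint continuity of $\nabla_K g$). It yields that $v^{\DR}$ is Clarke regular, has directional derivative
\[
(v^{\DR})'(K;H)=\max_{\Sigma\in\mathcal{C}_K}\langle 2S_K\Wc_K(\Sigma),H\rangle ,
\]
and has subdifferential $\partial_{\rm C}v^{\DR}(K)=\operatorname{conv}\{2S_K\Wc_K(\Sigma):\Sigma\in\mathcal{C}_K\}$. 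The convex hull is unnecessary in \eqref{eq:DR_Clarke_subdiff}: $\Sigma\mapsto 2S_K\Wc_K(\Sigma)$ is linear and $\mathcal{C}_K$ is convex (an exposed face of a convex set), so the image is already convex, and it is compact as a continuous image of a compact set.

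\textbf{Step 4: differentiability.} If $\mathcal{C}_K=\{\Sigma_K\}$, the directional derivative is linear in $H$. Together with regularity and local Lipschitzness, the Clarke subdifferential is the singleton $\{2S_K\Wc_K(\Sigma_K)\}$, which gives strict differentiability and \eqref{eq:DR_gradient_unique_covariance}.

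The main obstacle is checking the hypotheses of the max-function theorem carefully, in particular the uniform-in-$\Sigma$ control of $\nabla_K g$ near $K$. This is where compactness of $\mathcal{C}$ (from the bounded second moments in Assumption~\ref{as:ambiguity}) and openness of $\Ks$ are both used. If a direct citation is unwanted, I would instead prove the directional-derivative formula by hand: the lower bound comes from fixing any $\Sigma\in\mathcal{C}_K$, and the upper bound from a sequence of maximizers $\Sigma_k\in\mathcal{C}_{K+t_kH}$, using compactness to pass to a limit in $\mathcal{C}_K$ and the mean value theorem with uniform continuity of $\nabla_K g$.
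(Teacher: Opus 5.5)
Your proposal is correct and follows essentially the same route as the paper. You write $v^{\DR}$ as a maximum of the smooth functions $\langle\Sigma,M_K\rangle$ over the compact, $K$-independent set $\mathcal{C}$, invoke the max-function theorem (Clarke Thm.~2.8.2 / Rockafellar--Wets Thm.~10.31), and drop the convex hull because $\Sigma\mapsto 2S_K\Wc_K(\Sigma)$ is linear and $\mathcal{C}_K$ is convex. The only cosmetic difference is your Lipschitz step: you use a uniform bound on $\nabla_K g$ over a compact neighborhood times $\mathcal{C}$, whereas the paper uses the direct estimate $|v^{\DR}(K_1)-v^{\DR}(K_2)|\le r_{\D}\|M_{K_1}-M_{K_2}\|_F$ together with local Lipschitzness of $K\mapsto M_K$.
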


\begin{proof}
We first establish the representation of the DR cost as a pointwise
maximum of smooth functions.
For every $\Sigma\in\mathcal{C}$, define
\begin{equation*}
    v_{\Sigma}(K) \Let  v\big(K, \mathcal{N}(0,\Sigma)\big) = \langle \Sigma,M_K\rangle, \quad K\in\Ks.
\end{equation*}
By Corollary~\ref{cor:general_H2_grad}, $v_{\Sigma}$ is real-analytic on $\Ks$ and $\nabla_K v_{\Sigma}(K) = 2S_K\Wc_K(\Sigma)$. 
Moreover, notice that the index set $\mathcal{C}$ is compact and independent of $K$,
and $(K,\Sigma)\mapsto v_{\Sigma}(K)$ and
$(K,\Sigma)\mapsto\nabla_Kv_{\Sigma}(K)$ are continuous on
$\Ks\times\mathcal{C}$.
By the equality ~\eqref{eq:DR_cost_covariance_representation}, we have
\begin{equation}\label{eq:DR_as_max_smooth}
    v^{\DR}(K) = \max_{\Sigma\in\mathcal{C}}\; v_{\Sigma}(K).
\end{equation}
That is, the DR cost is a pointwise maximum of smooth functions.

Let us first verify local Lipschitz continuity. 
Set $r_{\D} \Let \max_{\Sigma\in\mathcal{C}}\|\Sigma\|_F <\infty$. 
Then, for every $K_1,K_2\in\Ks$, we have 
\begin{align*}
    |v^{\DR}(K_1)-v^{\DR}(K_2)| \;\leq\; \max_{\Sigma\in\mathcal{C}}\; \left| \langle\Sigma , M_{K_1} -M_{K_2} \rangle \right| 
    \;\leq\; r_{\D}\,\|M_{K_1}-M_{K_2}\|_F.
\end{align*}
Recall that $K\mapsto M_K$ is real-analytic and hence locally
Lipschitz on $\Ks$. Thus, $v^{\DR}$ is locally Lipschitz on $\Ks$. 

We next apply the classical result on pointwise maxima of smooth functions; see, e.g.,
\cite[Thm.~2.8.2]{clarke1990optimization} and \cite[Thm.~10.31]{rockafellar1998variational}.
Since the maximizing set in~\eqref{eq:DR_as_max_smooth} is compact
and the parametrization is continuously differentiable in $K$,
$v^{\DR}$ is Clarke regular and 
\begin{equation*}\label{eq:DR_subdiff_conv}
    \partial_{\rm C}v^{\DR}(K) = \operatorname{conv} \left\{ \nabla_Kv_{\Sigma_{K}}(K) = 2\;S_K\;\Wc_K(\Sigma_{K}) \;:\; \Sigma_{K}\in\mathcal{C}_{K} \right\}.
\end{equation*}
Notice that the map $\Sigma_{K}\mapsto\Wc_K(\Sigma_{K})$ is linear, which follows from the convergent series expression~\eqref{eq:contr_gram} of the controllability Gramian. 
Therefore, the map $\Sigma_{K} \mapsto 2S_K\Wc_K(\Sigma_{K})$ is also linear. 
Then, since $\mathcal{C}_{K}$ is convex, its image under a linear map is already convex.
Therefore, the convex hull in the preceding expression is redundant, and we obtain ~\eqref{eq:DR_Clarke_subdiff}. 
Finally, if the active covariance is unique,
$\mathcal{C}_{K}=\{\Sigma_K\}$, the Clarke
subdifferential is the singleton $\partial_{\rm C}v^{\DR}(K) = \left\{ 2S_K\Wc_K(\Sigma_K) \right\}$. 
The pointwise-maximum differentiation result then gives
\eqref{eq:DR_gradient_unique_covariance}, completing the proof.
\end{proof}

\subsection{Properties of DR $\mathcal{H}_2$-optimal solution}

In this subsection, we extend the result of Theorem~\ref{thm:standard_H2_optimal_grad} for standard $\mathcal{H}_2$ problem to the DR $\mathcal{H}_2$ problem~\eqref{eq:dr_H2_problem}. 
To that end, observe that Proposition~\ref{prop:DR_H2_Clarke} allows us to define stationarity for the possibly nonsmooth DR cost in the standard sense of Clarke. 
Namely, a stabilizing gain $K\in\Ks$ is Clarke stationary if $0\in\partial_{\rm C}v^{\DR}(K)$. 
In general nonsmooth and nonconvex optimization, Clarke stationarity is only a necessary condition for local optimality and is not sufficient for optimality. 
However, similar to the standard $\mathcal{H}_2$ problem, stationarity is also sufficient for global optimality for the DR $\mathcal{H}_2$ problem considered here.

\begin{Thm}[Stationarity and optimality for $v^{\DR}$] \label{thm:DR_H2_optimal_grad}
Let Assumption~\ref{as:ambiguity} hold. 
Then, a stabilizing gain
$K\opt\in\Ks$ is a solution of the DR $\mathcal{H}_2$
problem~\eqref{eq:dr_H2_problem} if and only if $K\opt$ is Clarke stationary, that is, $0\in\partial_{\rm C}v^{\DR}(K\opt)$. 
In fact, if $K\opt\in\Ks$ is Clarke stationary, then there exists an active covariance
$\Sigma_{K\opt}\in\mathcal{C}_{K\opt}$ such that 
\begin{equation}\label{eq:DR_performance_difference_lower_bound}
    v^{\DR}(K)-v^{\DR}(K\opt) \geq \left\| (\Gzw_K-\Gzw_{K\opt})\Sigma_{K\opt}^{1/2} \right\|_{\mathcal{H}_2}^{2}, \quad \forall K\in\Ks.
\end{equation}
\end{Thm}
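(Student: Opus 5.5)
Necessity is the routine direction. Proposition~\ref{prop:DR_H2_Clarke} makes $v^{\DR}$ locally Lipschitz on the open set $\Ks$. If $K\opt\in\Ks$ is a global minimizer, it is in particular a local minimizer, so Fermat's rule for the Clarke subdifferential gives $0\in\partial_{\rm C}v^{\DR}(K\opt)$.

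For sufficiency, assume $0\in\partial_{\rm C}v^{\DR}(K\opt)$. The explicit form~\eqref{eq:DR_Clarke_subdiff} then gives an active covariance $\Sigma_{K\opt}\in\mathcal{C}_{K\opt}$ with
\begin{equation*}
S_{K\opt}\,\Wc_{K\opt}(\Sigma_{K\opt})=0 .
\end{equation*}
Because $\Sigma_{K\opt}\in\mathcal{C}\subset\{\Sigma\succeq 0\}$, it has a symmetric square root. Consider the auxiliary distribution $\mathcal{N}(0,\Sigma_{K\opt})\in\PS$. By Corollary~\ref{cor:general_H2_grad}, $K\opt$ is a stationary point of the smooth function $v_{\Sigma_{K\opt}}(\cdot)=\langle\Sigma_{K\opt},M_{(\cdot)}\rangle$. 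Corollary~\ref{cor:general_H2_optimal_grad}, i.e.\ Theorem~\ref{thm:standard_H2_optimal_grad} with $\Bw$ replaced by $\Bw\Sigma_{K\opt}^{1/2}$, then yields the exact identity
\begin{equation*}
\langle\Sigma_{K\opt},M_K\rangle-\langle\Sigma_{K\opt},M_{K\opt}\rangle=\big\|(\Gzw_K-\Gzw_{K\opt})\Sigma_{K\opt}^{1/2}\big\|_{\mathcal{H}_2}^2,\qquad\forall K\in\Ks .
\end{equation*}
This uses only $\Sigma_{K\opt}\succeq0$ and not that it is the covariance of some $\Pw\in\D$, which is why I pass through $\mathcal{N}(0,\Sigma_{K\opt})$.

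To finish, I sandwich the two sides using the covariance representation~\eqref{eq:DR_cost_covariance_representation}. Since $\Sigma_{K\opt}$ is active, $v^{\DR}(K\opt)=\langle\Sigma_{K\opt},M_{K\opt}\rangle$. Since $\Sigma_{K\opt}\in\mathcal{C}$, for every $K\in\Ks$ we have $v^{\DR}(K)=\max_{\Sigma\in\mathcal{C}}\langle\Sigma,M_K\rangle\ge\langle\Sigma_{K\opt},M_K\rangle$. Combining these with the identity gives~\eqref{eq:DR_performance_difference_lower_bound}. The right-hand side of~\eqref{eq:DR_performance_difference_lower_bound} is nonnegative, so $K\opt$ is globally optimal, which closes the equivalence.

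The main obstacle is making sure the Clarke subdifferential is exactly a set of single gradients $2S_{K\opt}\Wc_{K\opt}(\Sigma)$, and not a convex combination of them evaluated at different active covariances. If it were only a convex combination, no single active covariance would make $K\opt$ smoothly stationary. This is precisely the role of the linearity-and-convexity argument in Proposition~\ref{prop:DR_H2_Clarke}: $\mathcal{C}_{K\opt}$ is convex and $\Sigma\mapsto\Wc_{K\opt}(\Sigma)$ is linear, so a convex combination of such gradients is itself the gradient at the averaged covariance, which again lies in $\mathcal{C}_{K\opt}$. I would invoke that argument explicitly at this step.
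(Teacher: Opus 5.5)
Your proposal is correct and follows essentially the same route as the paper's proof. Necessity comes from Clarke's Fermat rule on the open set $\Ks$. For sufficiency, you extract a single active covariance from the exact subdifferential formula of Proposition~\ref{prop:DR_H2_Clarke}, apply the Pythagorean identity to $\mathcal{N}(0,\Sigma_{K\opt})$, and sandwich using $v^{\DR}(K\opt)=\langle\Sigma_{K\opt},M_{K\opt}\rangle$ and $v^{\DR}(K)\geq\langle\Sigma_{K\opt},M_K\rangle$. You also correctly identify that the linearity of $\Sigma\mapsto\Wc_{K\opt}(\Sigma)$ and the convexity of $\mathcal{C}_{K\opt}$ are what let a single active covariance witness Clarke stationarity.
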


\begin{proof}
We prove the two implications separately. 
Suppose first that $v^{\DR}\opt = v^{\DR}(K\opt)$. 
Since the stabilizing set $\Ks$ is open, $K\opt$ is an unconstrained local minimizer of $v^{\DR}$ on an open neighborhood contained in $\Ks$. 
By Proposition~\ref{prop:DR_H2_Clarke}, $v^{\DR}$ is locally Lipschitz on $\Ks$. 
The Clarke necessary optimality condition \cite[Prop.~2.3.2]{clarke1990optimization} therefore yields Clarke stationarity, that is, $0\in\partial_{\rm C}v^{\DR}(K\opt)$. This proves necessity.

We next prove sufficiency. Let $K\opt\in\Ks$ satisfy $0\in\partial_{\rm C}v^{\DR}(K\opt)$. 
By the exact subdifferential representation~\eqref{eq:DR_Clarke_subdiff}, there exists $\Sigma_{K\opt}\in\mathcal{C}_{K\opt}$ such that $2S_{K\opt}\Wc_{K\opt} (\Sigma_{K\opt})=0$. 
Now, let $\Pw_{K\opt} \Let \mathcal{N}(0,\Sigma_{K\opt})\in\PS$ and consider the generalized $\mathcal{H}_2$ cost associated with $\Pw_{K\opt}$, that is, $v(K, \Pw_{K\opt}) = \langle\Sigma_{K\opt},M_K\rangle$. 
By Corollary~\ref{cor:general_H2_grad}, $\nabla_K v(K\opt, \Pw_{K\opt}) = 2\, S_{K\opt}\, \Wc_{K\opt}(\Sigma_{K\opt}) = 0$. 
We can therefore apply
Corollary~\ref{cor:general_H2_optimal_grad} to get
\begin{equation}\label{eq:effective_covariance_perf_diff}
    v(K, \Pw_{K\opt}) - v(K\opt, \Pw_{K\opt}) =
    \left\| (\Gzw_K-\Gzw_{K\opt})\Sigma_{K\opt}^{1/2} \right\|_{\mathcal{H}_2}^{2},
    \quad  \forall K\in\Ks.
\end{equation}
On the other hand, since
$\Sigma_{K\opt}\in\mathcal{C}_{K\opt}$, we have
\begin{equation}\label{eq:effective_covariance_active}
    v^{\DR}(K\opt) \;=\; \langle\Sigma_{K\opt},M_{K\opt}\rangle \;=\; v(K\opt, \Pw_{K\opt}).
\end{equation}
Furthermore, for an arbitrary $K\in\Ks$, it follows that 
\begin{equation}\label{eq:DR_dominates_effective}
    v^{\DR}(K) \;=\; \max_{\Sigma\in\mathcal{C}} \langle \Sigma,M_K\rangle
    \;\geq\; \langle\Sigma_{K\opt},M_K\rangle 
    = v(K, \Pw_{K\opt}).
\end{equation}
Combining~\eqref{eq:effective_covariance_perf_diff}, \eqref{eq:effective_covariance_active}, and
\eqref{eq:DR_dominates_effective}, we obtain \eqref{eq:DR_performance_difference_lower_bound}, which also proves global optimality of $K\opt$.
\end{proof}

Theorem~\ref{thm:DR_H2_optimal_grad} provides an interpretation of Clarke stationarity in terms of an active worst-case covariance.
Indeed, $0\in\partial_{\rm C}v^{\DR}(K\opt)$ if and only if there exists $\Sigma_{K\opt}\in\mathcal{C}_{K\opt}$ such that $K\opt$ is stationary for the generalized cost $v\big(\cdot, \mathcal{N}(0,\Sigma_{K\opt})\big)$.
The special landscape of the $\mathcal{H}_2$ problem then implies
that $K\opt$ is globally optimal for this generalized cost.
Since $\Sigma_{K\opt}$ is simultaneously active for the DR cost at
$K\opt$, this nominal global optimality transfers directly to the DR
problem. 
We can construct a \emph{supporting-plane-type} lower bound for the DR cost in the form of $v^{\DR}(K) \geq \langle \Sigma_{K\opt}, M_K \rangle$ for all $K \in \Ks$, with the bound being tight at $K=K\opt$; 
see Figure~\ref{fig:K_space_bound} for a graphical illustration. 

\begin{figure}[t]
\centering
\begin{tikzpicture}[
    scale=1.0,
    arr/.style={-{Latex[length=3mm]},thick},
    pt/.style={circle,fill,inner sep=1.4pt}
]

\draw[->] (-0.3,0) -- (7.2,0) node[below] {\small $K$};
\draw[->] (0,-0.3) -- (0,3.6) node[left] {\small $ v^{\DR}(K)$};
\draw[dotted] (4.3,0.5) -- (0,0.5) node[left] {\small $v^{\DR}(K\opt)$};
\draw[dotted] (4.3,0.5) -- (4.3,0) node[below] {\small $K\opt$};

\coordinate (Keq) at (4.3,0.5);
\filldraw[black] (Keq) circle (2pt);

\draw[thick]
    plot[smooth]
    coordinates{
        (0.4,2.9)
        (1,1.8)
        (3.7,1.3)
        (Keq)
        (5.1,1.6)
        (5.9,1.8)
        (6.6,2.9)
    };

\draw[thick,dashed]
    plot[smooth]
    coordinates{
        (0.4,1.5)
        (3,1)
        (Keq)
        (6,1.6)
        (6.6,1.9)
    };

\node[align=right] at (6.5,1) {\small $\langle \Sigma_{K\opt} , M_K \rangle$};

\end{tikzpicture}
\caption{An intuitive presentation of Theorem~\ref{thm:DR_H2_optimal_grad}. $K\opt\in\Ks$ is a Clarke stationary point of the DR $\mathcal{H}_2$ cost $v^{\DR}(K) = \max_{\Sigma\in\mathcal{C}}\; \langle \Sigma,M_K\rangle$, and $\Sigma_{K\opt}\in\argmax_{\Sigma\in\mathcal{C}}\; \langle \Sigma,M_{K\opt}\rangle$ is a corresponding active covariance.}
\label{fig:K_space_bound}
\end{figure}

It is important that $\Sigma_{K\opt}$ need not, under Assumption~\ref{as:ambiguity} alone, be the covariance of an actual
worst-case distribution in $\D$. 
Rather, it belongs to the closed convex hull of the covariance matrices generated by $\D$. 
No attainment assumption on the original distributional maximization is therefore required. 
If, in addition, $\mathfrak{C}$ is closed and convex, then $\mathcal{C}=\mathfrak{C}$ and the active covariance $\Sigma_{K\opt}$ is the covariance of an actual distribution $\Pw_{K\opt}\in\D$. 
In that case, $\Pw_{K\opt}$ can be chosen to be a worst-case distribution at $K\opt$, and $\nabla_Kv(K\opt,\Pw_{K\opt})=0$.

\subsection{Distributional robustness of standard $\mathcal{H}_2$ synthesis}

We now focus on the connection between the solutions of the standard and DR $\mathcal{H}_2$ problems by showing the inherent distributionally robustness of standard $\mathcal{H}_2$ synthesis. 
The following result is a direct consequence of the characterization of the generalized cost in Lemma~\ref{lem: general_H2_cost} and the minimality of the compressed observability Gramian~$M_{K\opt}$ in Loewner order established in Corollary~\ref{cor:obs_gram_minimal}. 

\begin{Thm}[From standard to DR $\mathcal{H}_2$ optimality] \label{thm:dr_H2_standard} 
Let $K\opt\in\Ks$ be a solution of the standard $\mathcal{H}_2$ problem~\eqref{eq:standard_H2_problem}. Then, 
\begin{equation} \label{eq:general_cost_Kopt_ordering}
    v(K\opt, \Pw) \leq v(K, \Pw), \quad \forall K\in\Ks,\; \forall \Pw\in\PS.
\end{equation}
In particular, under Assumption~\ref{as:ambiguity}, we have $v^{\DR}\opt = v^{\DR}(K\opt)$, that is, the gain~$K\opt$ is also a solution of the DR $\mathcal{H}_2$ problem~\eqref{eq:dr_H2_problem}.    
\end{Thm}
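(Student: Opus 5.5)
The plan is to combine Lemma~\ref{lem: general_H2_cost} with the Loewner minimality of Corollary~\ref{cor:obs_gram_minimal}. First, fix $\Pw\in\PS$ and $K\in\Ks$. By Lemma~\ref{lem: general_H2_cost}, $v(K,\Pw)-v(K\opt,\Pw)=\langle \Sp, M_K-M_{K\opt}\rangle$. Corollary~\ref{cor:obs_gram_minimal} gives $M_K-M_{K\opt}\succeq 0$. Since $\Sp=\EE_{\Pw}[ww\tr]\succeq 0$, the Frobenius inner product of two positive semidefinite matrices is nonnegative. To justify this, write
\begin{equation*}
\langle \Sp, N\rangle=\Trc\big(\Sp^{1/2} N \Sp^{1/2}\big)\geq 0 \quad \text{for } N\succeq 0.
\end{equation*}
This yields \eqref{eq:general_cost_Kopt_ordering}. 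Alternatively, we can expand $\Sp=\sum_i \lambda_i q_iq_i\tr$ with $\lambda_i\ge0$ and use $q_i\tr N q_i\ge 0$.

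For the DR statement, take Assumption~\ref{as:ambiguity}, so that $\D\subseteq\PS$ and $v^{\DR}$ is finite on $\Ks$ by Proposition~\ref{prop:DR_H2_Clarke}. For any $K\in\Ks$ and any $\Pw\in\D$, the first part gives $v(K\opt,\Pw)\le v(K,\Pw)\le v^{\DR}(K)$. Taking the supremum over $\Pw\in\D$ on the left gives $v^{\DR}(K\opt)\le v^{\DR}(K)$ for all $K\in\Ks$. Taking the infimum over $K$ then gives $v^{\DR}(K\opt)\le v^{\DR}\opt$. The reverse inequality $v^{\DR}\opt\le v^{\DR}(K\opt)$ holds trivially because $K\opt\in\Ks$. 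Hence $v^{\DR}\opt=v^{\DR}(K\opt)$.

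There is no substantive obstacle, since the heavy lifting is in Corollary~\ref{cor:obs_gram_minimal}. The only points needing care are the positive semidefiniteness of $\Sp$, which holds because it is a second-moment matrix, and the finiteness of $v^{\DR}$, which ensures the suprema are well-defined real numbers. The latter follows from the bound $\sup_{\Pw\in\D}\|\Sp\|_F<\infty$ in Assumption~\ref{as:ambiguity}.
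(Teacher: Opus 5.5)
Your proposal is correct and follows essentially the same route as the paper: Lemma~\ref{lem: general_H2_cost} together with the Loewner minimality of Corollary~\ref{cor:obs_gram_minimal} gives the pointwise ordering, and taking the supremum over $\D$ then yields DR optimality. You also spell out two steps the paper leaves implicit, namely the nonnegativity of $\langle \Sp, N\rangle$ for $N\succeq 0$ and the trivial reverse inequality $v^{\DR}\opt\le v^{\DR}(K\opt)$.
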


\begin{proof}
By Corollary~\ref{cor:obs_gram_minimal}, for the standard $\mathcal{H}_2$-optimal gain $K\opt$, we have $M_K-M_{K\opt}\succeq0$ for all $K\in\Ks$. 
On the other hand, every distribution $\Pw\in\PS$
has a positive semi-definite covariance matrix
$\Sp\succeq0$.
As a result,
\begin{align*}
    \langle \Sp,M_K-M_{K\opt} \rangle = \langle \Sp,M_K \rangle - \langle \Sp,M_{K\opt} \rangle  \geq 0, \quad \forall K\in\Ks,\; \forall \Pw\in\PS.
\end{align*}
By Lemma~\ref{lem: general_H2_cost}, $v(K,\Pw)=\langle\Sp,M_K\rangle$ and hence we arrive at the pointwise inequality~\eqref{eq:general_cost_Kopt_ordering}. 
Since~\eqref{eq:general_cost_Kopt_ordering} holds for every
$\Pw\in\D \subset \Pw$, taking the supremum over the same ambiguity set
$\D$ on both sides yields
\begin{align*}
     v^{\DR}(K\opt) = \sup_{\Pw\in\D}v(K\opt,\Pw) \;\leq\; \sup_{\Pw\in\D}v(K,\Pw) = v^{\DR}(K),    \quad \forall K\in\Ks. 
\end{align*}
The DR cost $v^{\DR}$ is finite under Assumption~\ref{as:ambiguity}. 
Therefore, the preceding inequality shows that $K\opt \in \argmin_{K\in\Ks}v^{\DR}(K)$, which completes the proof.
\end{proof}

The preceding result indicates that a solution $K\opt\in\Ks$ to the standard $\mathcal{H}_2$ problem~\eqref{eq:standard_H2_problem} can be used for all the ambiguity sets satisfying Assumption~\ref{as:ambiguity}. 
The ambiguity set changes its certified worst-case performance, even though a standard $\mathcal{H}_2$-optimal gain remains optimal. 
The theorem gives an inclusion of optimizer sets; it does not, without additional assumptions, assert uniqueness or equality of the two optimizer sets. 
In Appendix~\ref{app:invisible}, we describe cost-preserving gain perturbations that can explain nonuniqueness of standard $\mathcal{H}_2$-optimal gains.
Moreover, in Appendix~\ref{app:DR_stand_optimality}, we provide a sufficient
condition for a DR-optimal gain to also be standard $\mathcal{H}_2$-optimal.

Recall that under the regular assumptions of Remark~\ref{Rem:standrad_H2_optimal_suff}, a common optimal controller is already provided by classical LQR theory.
In particular, the stabilizing LQ-optimal gain $K_{\mathrm{LQ}}$ depends only on $(A,B,C,D)$ and not on $\Bw$ or the disturbance covariance. 
Most importantly, its value matrix satisfies $\Wo_{K_{\mathrm{LQ}}} = \Wo_{\mathrm{LQ}}\preceq \Wo_K$ for all $K\in\Ks$, and hence $\langle\Sp,\Bw\tr \Wo_{K_{\mathrm{LQ}}}\Bw\rangle
 \leq\langle\Sp,\Bw\tr \Wo_K \Bw\rangle$ for every $\Sp\succeq0$.
That is, $K_{\mathrm{LQ}}$ is optimal for every covariance, and consequently for every ambiguity set considered here, as also noted in~\cite{Taskesen2023DRLQ}. 
This classic inherent distributional robustness of $K_{\mathrm{LQ}}$ can also be established using the gradient dominance property of the LQR objective studied in~\cite{FazelGeKakadeMesbahi2018, Bu2019LQRFirstOrder, watanabe2026gradient}. 
Theorem~\ref{thm:dr_H2_standard} however applies more generally to \emph{every} attained standard $\mathcal H_2$ optimum, without imposing any particular assumption on the system matrices and, in particular, without requiring a nonsingular disturbance controllability Gramian~$\Wc_{K\opt}^{\St}$.
As discussed above, a standard $\mathcal H_2$-optimal gain $K\opt$ can satisfy $\Wo_{K_\star}\ne \Wo_{\mathrm{LQ}}$, where $\Wo_{\mathrm{LQ}}$ is the LQ-optimal
value matrix in~\eqref{eq:LQ_value_matrix}, while $\Bw\tr \Wo_{K_\star}\Bw=\Bw\tr \Wo_{\mathrm{LQ}}\Bw$. 
Theorem~\ref{thm:dr_H2_standard} shows that this compressed equality, rather than equality of the full value matrices, is sufficient for the DR optimality.

We finish this section with a simple application of the preceding result for the ``spectral'' ambiguity set characterized by an upper bound $\Sigma_0 \succeq 0$ on the disturbance covariance.

\begin{Prop}[Spectral DR $\mathcal{H}_2$]\label{prop: DR cost spec}
Let $\Sigma_0 \succeq 0$ and consider the DR $\mathcal{H}_2$ problem~\eqref{eq:dr_H2_problem} with
\begin{equation}\label{eq:dr_spec_cov}
    \D = \left\{ \Pw \in \PS \;:\; \Sp \preceq \Sigma_0 \right\}.
\end{equation}
Then, 
\begin{equation*}
    v^{\DR} (K) = \langle \Sigma_0 , M_K \rangle,\quad  \forall K\in\Ks.
\end{equation*}
Moreover, if $K\opt\in\Ks$ is a standard $\mathcal{H}_2$-optimal solution, 
then $v^{\DR}\opt = \langle \Sigma_0 , M_{K\opt} \rangle$ 
and any $\Pw\opt \in \D$ with covariance $\Sigma_{\Pw\opt}=\Sigma_0$ is a worst-case distribution in~\eqref{eq:dr_H2_problem}.
\end{Prop}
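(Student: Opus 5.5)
The plan is to compute the DR cost in closed form via the covariance representation \eqref{eq:DR_cost_covariance_representation}, and then apply Theorem~\ref{thm:dr_H2_standard}.

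\textbf{Step 1: the DR cost is at most $\langle \Sigma_0, M_K\rangle$.} Fix $K\in\Ks$. Note that $M_K = \Bw\tr \Wo_K \Bw \succeq 0$, since $\Wo_K\succeq0$ by the series \eqref{eq:gram_obs}. For any $\Pw\in\D$ we have $\Sigma_0-\Sp\succeq0$. The inner product of two positive semidefinite matrices is nonnegative, which gives $\langle \Sigma_0-\Sp, M_K\rangle\ge0$. Combined with Lemma~\ref{lem: general_H2_cost}, this yields
\[
v(K,\Pw)=\langle \Sp, M_K\rangle\le\langle\Sigma_0,M_K\rangle .
\]
Taking the supremum over $\D$ gives $v^{\DR}(K)\le\langle\Sigma_0,M_K\rangle$. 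Before this, I will check Assumption~\ref{as:ambiguity}. The set $\D$ is independent of $K$. Its second moments are uniformly bounded, because $0\preceq\Sp\preceq\Sigma_0$ implies $\|\Sp\|_F\le\Trc(\Sp)\le\Trc(\Sigma_0)$.

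\textbf{Step 2: the bound is attained.} I need a distribution in $\D$ whose covariance is exactly $\Sigma_0$. The simplest choice is $\Pw_0=\mathcal{N}(0,\Sigma_0)$, which is allowed to be degenerate when $\Sigma_0$ is singular. If one prefers to avoid degenerate Gaussians, an alternative is $w=\Sigma_0^{1/2}\xi$ with $\xi$ a Rademacher vector. Either choice has zero mean and second moment $\Sigma_0$, so it lies in $\PS$ and satisfies $\Sp\preceq\Sigma_0$; hence it belongs to $\D$ and, in particular, $\D$ is nonempty. Then $v(K,\Pw_0)=\langle\Sigma_0,M_K\rangle$, so $v^{\DR}(K)=\langle\Sigma_0,M_K\rangle$ for every $K\in\Ks$.

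\textbf{Step 3: optimal value and worst case.} Let $K\opt$ be standard $\mathcal{H}_2$-optimal. Theorem~\ref{thm:dr_H2_standard} applies because Assumption~\ref{as:ambiguity} was verified in Step~1. It gives $v^{\DR}\opt=v^{\DR}(K\opt)$, and by Step~2 this equals $\langle\Sigma_0,M_{K\opt}\rangle$. A worst-case distribution here means one that attains the inner supremum at the optimal gain $K\opt$. Any $\Pw\opt\in\D$ with $\Sigma_{\Pw\opt}=\Sigma_0$ satisfies $v(K\opt,\Pw\opt)=\langle\Sigma_0,M_{K\opt}\rangle=v^{\DR}\opt$, so it is a worst case. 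It is also simultaneously worst-case for every $K\in\Ks$.

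I expect no real obstacle. The only delicate points are, first, the positivity argument $\langle \Sigma_0-\Sp,M_K\rangle\ge0$, which I will justify via $\Trc(PQ)=\Trc(Q^{1/2}PQ^{1/2})\ge0$ for $P,Q\succeq0$. The second is making sure the attaining distribution exists in $\D$ even when $\Sigma_0$ is singular, which the constructions in Step~2 handle.
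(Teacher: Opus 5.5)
Your proposal is correct and follows the same route as the paper. You bound $\langle \Sp, M_K\rangle \le \langle \Sigma_0, M_K\rangle$ using $\langle \Sigma_0-\Sp, M_K\rangle\ge 0$ for positive semidefinite matrices, observe that the maximum is attained at covariance $\Sigma_0$, and invoke Theorem~\ref{thm:dr_H2_standard}; your explicit check of Assumption~\ref{as:ambiguity} and your construction of a distribution in $\D$ with covariance $\Sigma_0$ (also when $\Sigma_0$ is singular) simply make precise details the paper leaves implicit.
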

\begin{proof}
Fix $K\in\Ks$ and recall that $M_K \succeq 0$. 
By Lemma~\ref{lem: general_H2_cost}, we have
\begin{align*}
    v^{\DR} (K) = \sup_{\Pw \in \D} v(K, \Pw) = \max_{0\preceq \Sp \preceq \Sigma_0} \langle \Sp , M_K \rangle.
\end{align*}
Let $\Sp$ be any feasible covariance and define $\Delta := \Sigma_0 - \Sp \succeq 0$. 
Observe that
\begin{align*}
    \langle \Sigma_0 , M_K \rangle - \langle \Sp , M_K \rangle = \langle \Sigma_0 - \Sp , M_K \rangle = \langle \Delta , M_K \rangle \geq 0,
\end{align*}
where we used the fact that $\Delta\succeq 0$ and $M_K\succeq 0$ for the last inequality.
Hence, $\langle \Sp , M_K \rangle \le \langle \Sigma_0 , M_K \rangle$ for every feasible $\Sp$, and the maximum is attained at $\Sigma_{\Pw\opt}=\Sigma_0$. 
Therefore, 
\begin{equation*}
    v^{\DR} (K) = \langle \Sigma_0 , M_K \rangle,\quad  \forall K\in\Ks.
\end{equation*}
The result then follows from Theorem~\ref{thm:dr_H2_standard}. 
\end{proof}

The preceding result indicates that the worst-case distribution is independent of the feedback gain for the spectral ambiguity characterized by~\eqref{eq:dr_spec_cov}. 
This is because $M_K \succeq 0$ for any gain~$K$ and hence the functional $\Sp \mapsto v(K,\Sp) = \langle \Sp , M_K \rangle$ is monotone with respect to the Loewner order.
As a result, the inner maximization in DR problem~\eqref{eq:dr_H2_problem} always selects the extreme point $\Sigma_{\Pw\opt} = \Sigma_0$, which is independent of $K$; see Figure~\ref{fig:ambiguity_sets} for a graphical illustration. 
The simplification observed here is not universal and may break down for other ambiguity sets (or uncertainty models).
In such cases, the adversary reacts to the controller through $M_K$ and the worst-case distribution depends on $K$. 
This dependence affects performance evaluation, while Theorem~\ref{thm:dr_H2_standard} still provides a common optimal gain.
The next section derives explicit worst-case performance certificates 
and identifies the corresponding covariances for three cases of ambiguity sets.

\section{Worst-case performance certificates for specific ambiguity sets}
\label{sec:specific-ambiguity-set}

Once a standard $\mathcal H_2$-optimal stabilizing gain is available, Theorem~\ref{thm:dr_H2_standard} guarantees the DR optimality of this gain for ambiguity sets satisfying Assumption~\ref{as:ambiguity}. 
The remaining task is then to evaluate its worst-case performance. 
In this section, we look at three cases of ambiguity sets for which the worst-case distribution depends on the (optimal) feedback gain. 
To be precise, we consider ambiguity sets based on the Frobenius ball around a nominal covariance and KL and W2 balls around a nominal zero-mean Gaussian distribution. 
The presented results quantify the performance guarantee as the ambiguity radius changes and identify adverse disturbance directions. 
In order to exclude the degenerate case of 
\begin{equation*}
    v(K\opt,\Pw) = 0, \quad \forall \Pw\in\PS, 
\end{equation*}
we consider the following assumption throughout this section. 

\begin{As}[Non-zero $M_{K\opt}$]\label{as:non-zero-M_K}
The standard $\mathcal{H}_2$ problem~\eqref{eq:standard_H2_problem} has a solution $K\opt$ such that $M_{K\opt}\neq 0$.  
\end{As}

Let us first remark that Assumption~\ref{as:non-zero-M_K} implies that 
\begin{equation*}
    \Trc(M_{K}) = v^{\St}(K) \geq v^{\St}(K\opt) = \Trc(M_{K\opt}) > 0, 
\end{equation*}
and hence
\begin{equation*}
    M_{K}\neq 0, \quad \forall K\in\Ks.
\end{equation*}

\subsection{Frobenius ambiguity set} 
Let us define a moment-based ambiguity set as the Frobenius ball of radius $\epsilon \geq 0$ around a nominal covariance $\Sigma_0 \succeq 0$, that is,
\begin{subequations}\label{eq:dr_frob}
\begin{equation}\label{eq:frob_set}
   \D^{\F}(\epsilon,\Sigma_0) \;\Let\; \left\{ \Pw \in \PS \;:\; \left\|\Sp- \Sigma_0 \right\|_{F} \leq \epsilon \right\},
\end{equation}
and the corresponding DR $\mathcal{H}_2$ synthesis problem
\begin{align}\label{eq:dr_frob_prob}
    v^{\F}\opt := \min_{K \in \Ks}\, \left\{ v^{\F}(K) \Let \sup_{\Pw \in \D^{\F}(\epsilon,\Sigma_0)} v(K, \Pw) \right\}.
\end{align}
\end{subequations}

Our first result formalizes the Frobenius DR cost~$v^{\F}$ and the corresponding worst-case distribution as a function of the gain $K$.

\begin{Prop}[Frobenius DR cost $v^{\F}$]\label{prop: DR cost frob}
Let Assumption~\ref{as:non-zero-M_K} hold. 
Then, $v^{\F}$ is real-analytic on $\Ks$ and given by
\begin{subequations}\label{eq:frobenius-DR-cost}
    \begin{align}\label{eq:frobenius-ambiguity-cost}
        v^{\F}(K) = \langle \Sigma_K^{\F} , M_K \rangle,  \quad \forall K\in\Ks,
    \end{align}
where
    \begin{equation}\label{eq:frobenius-ambiguity-covariance}
        \Sigma_K^{\F} \Let \Sigma_0 + \frac{\epsilon}{\|M_K\|_{F}} M_K.
    \end{equation}
\end{subequations}
In particular, any $\Pw^{\F}_K \in \D^{\F}(\epsilon,\Sigma_0)$ with covariance $\Sigma_{\Pw^{\F}_K}=\Sigma_K^{\F}$ is a worst-case distribution in~\eqref{eq:dr_frob_prob}, corresponding to the gain~$K\in\Ks$.
\end{Prop}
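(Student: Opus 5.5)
By Lemma~\ref{lem: general_H2_cost}, the Frobenius DR cost is a linear maximization over covariances,
\begin{equation*}
    v^{\F}(K) = \sup\big\{ \langle \Sigma , M_K\rangle \;:\; \Sigma\succeq 0,\ \|\Sigma-\Sigma_0\|_F\leq\epsilon \big\},
\end{equation*}
because every $\Sigma\succeq0$ is the covariance of some zero-mean distribution in $\PS$, for instance $\mathcal{N}(0,\Sigma)$. The plan is to solve this problem in closed form for each fixed $K\in\Ks$. We first drop the constraint $\Sigma\succeq 0$ and obtain an upper bound. Then we check that the unconstrained maximizer is automatically positive semi-definite, so it is feasible and the bound is attained.

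\textbf{Upper bound.} Write $\Sigma=\Sigma_0+\Delta$ with $\|\Delta\|_F\le\epsilon$. By Cauchy--Schwarz in the Frobenius inner product,
\begin{equation*}
    \langle \Sigma_0+\Delta, M_K\rangle \le \langle\Sigma_0,M_K\rangle+\epsilon\|M_K\|_F .
\end{equation*}
Since $M_K\neq 0$ by Assumption~\ref{as:non-zero-M_K} and the remark following it, equality holds only for $\Delta=\epsilon M_K/\|M_K\|_F$. This choice gives $\Sigma_K^{\F}$ in~\eqref{eq:frobenius-ambiguity-covariance}.

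\textbf{Feasibility and attainment.} We have $M_K=\Bw\tr\Wo_K\Bw\succeq 0$ because $\Wo_K\succeq0$, and $\Sigma_0\succeq0$. Hence $\Sigma_K^{\F}\succeq 0$, and $\|\Sigma_K^{\F}-\Sigma_0\|_F=\epsilon$. Taking $\Pw^{\F}_K=\mathcal{N}(0,\Sigma_K^{\F})$ gives a member of $\D^{\F}(\epsilon,\Sigma_0)$, so the supremum is attained. Any distribution with this covariance attains the same value, since $v$ depends on $\Pw$ only through $\Sp$. This proves~\eqref{eq:frobenius-ambiguity-cost}, which simplifies to
\begin{equation*}
    v^{\F}(K)=\langle\Sigma_0,M_K\rangle+\epsilon\|M_K\|_F .
\end{equation*}
It also proves the worst-case claim, and uniqueness at the covariance level follows from the strict equality case above.

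\textbf{Analyticity.} The map $K\mapsto M_K$ is real-analytic on $\Ks$, as shown in the proof of Lemma~\ref{lem:standard_H2_grad}. So $K\mapsto\langle\Sigma_0,M_K\rangle$ and $K\mapsto\|M_K\|_F^2$ are real-analytic. The square root is analytic on $(0,\infty)$, and $\|M_K\|_F^2>0$ on all of $\Ks$ by Assumption~\ref{as:non-zero-M_K} and the subsequent remark. Composing gives real-analyticity of $v^{\F}$. The only delicate point in the whole argument is this nonvanishing of $M_K$ for every $K\in\Ks$, not just at $K\opt$. It is needed both to make the maximizer well defined and for analyticity, and the remark after Assumption~\ref{as:non-zero-M_K} supplies it via the trace inequality.
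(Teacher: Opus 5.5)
Your proposal is correct and follows essentially the same route as the paper: a Cauchy--Schwarz upper bound over the Frobenius covariance ball, attainment at $\Sigma_K^{\F}$ (feasible because $\Sigma_0, M_K\succeq 0$), and real-analyticity from that of $K\mapsto M_K$ together with $M_K\neq 0$ on all of $\Ks$. Your explicit treatment of $\|M_K\|_F$ as the square root of a positive real-analytic function, and your covariance-level uniqueness remark from the Cauchy--Schwarz equality case, are correct and slightly sharpen the paper's brief argument.
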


\begin{proof}
We first derive the characterization~\eqref{eq:frobenius-DR-cost}. 
Observe that the set of covariance matrices generated by the ambiguity set $\D^{\F}(\epsilon,\Sigma_0)$ is the closed and convex set
\begin{equation*}
    \mathcal{C}^{\F}(\epsilon,\Sigma_0) = \{ \Sigma  \;:\; \Sigma \succeq 0,\; \| \Sigma - \Sigma_0 \|_{F} \leq \epsilon \}.
\end{equation*}
Then, using the representation~\eqref{eq:DR_cost_covariance_representation} of the DR $\mathcal{H}_2$ cost, for any fixed $K \in \Ks$, we have 
\begin{align*}
    v^{\F}(K) =  \max_{\Sigma \in \mathcal{C}^{\F}(\epsilon,\Sigma_0) } \; \langle \Sigma , M_K \rangle.
\end{align*}
Using the Cauchy-Schwarz inequality for the Frobenius inner product, the objective function in the preceding maximization can be upper bounded as follows
\begin{align*}
    \langle \Sigma , M_K \rangle &= \langle \Sigma + \Sigma_0 - \Sigma_0 , M_K \rangle = \langle  \Sigma_0 , M_K \rangle + \langle \Sigma - \Sigma_0 , M_K \rangle \\
    & \leq \langle  \Sigma_0 , M_K \rangle + \|\Sigma - \Sigma_0\|_{F} \cdot \|M_K\|_{F}.
\end{align*}
Therefore, for any feasible covariance $\Sigma $ with $\|\Sigma - \Sigma_0 \|_{F} \leq \epsilon$, we have
\begin{equation*}
    \langle \Sigma , M_K \rangle \leq \langle \Sigma_0 , M_K \rangle + \epsilon \cdot \|M_K\|_{F}.  
\end{equation*}
By Assumption~\ref{as:non-zero-M_K}, we have $M_K\neq 0$ and hence $\Sigma_K^{\F}$ is well-defined. 
Now, observe that $\Sigma_K^{\F}\in \mathcal{C}^{\F}(\epsilon,\Sigma_0)$ (since $\Sigma_0, M_K \succeq 0$ and $\| \Sigma_K^{\F}- \Sigma_0 \|_{F} = \epsilon$) and the upper bound in the preceding equation is attained by $ \Sigma = \Sigma_K^{\F}$. 
This completes the first part of the proof. 

We next show that $v^{\F}$ is real-analytic on $\Ks$. 
We have previously shown that the map $K\mapsto M_K$ is real-analytic on $\Ks$;  see the proof of Lemma~\ref{lem:standard_H2_grad}. 
Then, since $M_K \neq 0$ for all $K\in\Ks$ by Assumption~\ref{as:non-zero-M_K}, the function $v^{\F}$ is real-analytic on $\Ks$.
\end{proof} 

For the Frobenius ambiguity set~\eqref{eq:frob_set}, there is a nonlinear coupling between the controller gain and the disturbance covariance in the DR cost $v^{\F}$. 
In particular, the worst-case covariance $\Sigma_K^{\F}$ is now a function of the gain $K$, breaking down the separation enjoyed in the spectral DR $\mathcal{H}_2$ framework; see Figure~\ref{fig:ambiguity_sets} for a graphical illustration. 
A similar coupling between the controller gain and the disturbance covariance will arise in the next two cases of ambiguity sets.

\begin{figure}[t]
\centering
\begin{tikzpicture}[scale=0.9, every node/.style={font=\small}]

\begin{scope}[shift={(0,0)}]
    \fill[gray!30] (0.5,0.5) rectangle (3,3);
    \draw[thick] (2.5,4) -- (4,1);
    \draw[->,thick] (3.5,2) -- (4.5,2.5) node[right] {$M_K$};
    \draw[dashed,thick] (1,4) -- (4,2.5);
    \draw[->,dashed,thick] (3.5,2.75) -- (4,3.75) node[right] {$M_{K'}$};
    \filldraw[black] (3,3) circle (2pt);
    \filldraw[black] (0.5,0.5) circle (2pt);
    \node at (2.25,-0.5) {Spectral set: $0 \preceq \Sigma \preceq \Sigma_0$};
    \node[below left] at (.5,.5) {$0$};
    \node[below left] at (3,3) {$\Sigma_0$};
\end{scope}

\begin{scope}[shift={(8.5,-0.2)}]

    \fill[gray!30] (2,2) circle (1.4);
    \node[below left] at (2,2) {$\Sigma_0$};
    \draw[dotted, thick, black!70!black] (2,2) -- ++(1.0,-1.0) node[pos=.6, left, black!70!black] {$\epsilon$};
    \draw[thick] (2,2) -- (4,3);
    \draw[->,thick] (2,2) -- (3,2.5) node[below right] {$M_K$};
    \draw[dashed,thick] (2,2) -- (3,4);
    \draw[->,dashed,thick] (2,2) -- (2.5,3) node[above left] {$M_{K'}$};
    \filldraw[black] (2+0.62,2+1.25) circle (2pt);
    \filldraw[black] (2+1.27,2+0.62) circle (2pt);
    \filldraw[black] (2,2) circle (2pt);
    \node at (2.5,-0.3) {Frobenius ball: $\|\Sigma - \Sigma_0\|_{F} \leq \epsilon$};
\end{scope}

\end{tikzpicture}
\caption{Possible dependence of the worst-case covariance on the gain. 
The gray areas show the feasible covariances for the two ambiguity sets. The arrows indicate the direction of the cost gradient $M_K = \Bw\tr \Wo_K \Bw \succeq 0$, for two gains $K$ and $K'$.
(Left) For the spectral set~\eqref{eq:dr_spec_cov}, a worst-case covariance lies at the ``upper right corner", i.e., $\Sigma_0$, regardless of the gradient ($M_K$ vs $M_{K'}$) and hence the gain.
(Right) For the Frobenius ball in~\eqref{eq:frob_set}, the worst-case covariance lies at a point at the boundary of the ball, depending on the gradient and hence the gain. }
\label{fig:ambiguity_sets}
\end{figure}

\subsection{KL ambiguity set} 
We now consider a model of distributional uncertainty based on the
KL divergence. 
Let us first introduce the KL divergence. 
For two probability distributions $\Qw$ and $\Pw$ on $\R^p$, provided $\Qw \ll \Pw$, that is, $\Qw$ is absolutely continuous with respect to $\Pw$, the KL divergence reads as
\begin{align*}
    \text{dist}^{\KL}(\Qw \| \Pw) := \int_{\R^p} \log\!\Big(\frac{d\Qw}{d\Pw}(w)\Big) d\Qw(w),
\end{align*}
where $d\Qw/d\Pw$ is the Radon-Nikodym derivative; see~\cite{kullback1951information} and
\cite[Sec.~1.4]{dupuis1997weak}. 
By convention, $\text{dist}^{\KL}(\Qw \| \Pw)=+\infty$ if $\Qw\not\ll \Pw$. 
Now, given a nominal zero-mean Gaussian distribution $\Pw_0 = \mathcal{N}(0,\Sigma_0)$ with $\Sigma_0\succeq 0$ and a radius $\epsilon>0$, let us consider the \emph{KL ball} 
\begin{subequations}\label{eq:dr_KL}
\begin{align}\label{eq:dr_KL_set}
    \D^{\KL}(\epsilon,\Sigma_0) \Let \big\{\Pw \in \PS \;:\; \text{dist}^{\KL}\big(\Pw \| \Pw_0\big) \leq \epsilon \big\},
\end{align}
as the ambiguity set and define the corresponding DR $\mathcal{H}_2$ synthesis problem
\begin{align}\label{eq:dr_KL_prob}
    v^{\KL}\opt := \min_{K \in \Ks}\, \left\{ v^{\KL}(K) \Let \sup_{\Pw \in \D^{\KL}(\epsilon,\Sigma_0)} v(K, \Pw) \right\}.
\end{align}
\end{subequations}
We next provide an explicit formula for the DR cost $v^{\KL}$ assuming $\Sigma_0 \succ 0$. 
This is done by using the duality theory and the Donsker-Varadhan variational formula~\cite[Lem.~3.1]{ahmadi2012entropic} to reduce the inner maximization problem in~\eqref{eq:dr_KL_prob} to a \emph{one-dimensional} convex minimization. 
In particular, this ambiguity set converts distributional robustness into an \emph{entropic penalty}~\cite[Thm.~3.3]{ahmadi2012entropic}. 
To that end, let us define 
\begin{align*}
    \lambda^{\KL}_K \Let \lambda_{\max}(\Sigma_0^{1/2}\, M_K\, \Sigma_0^{1/2}), \quad \forall K\in\Ks,
\end{align*}
and 
\begin{align}\label{eq:phi_KL}
    \phi^{\KL}(K,\theta) \Let  \epsilon \theta  - \frac{\theta}{2} \log \det \big(I - \frac{2}{\theta}\, \Sigma_0^{1/2}\, M_K\, \Sigma_0^{1/2} \big),\quad \forall K\in\Ks,\ \forall \theta\in(2\lambda^{\KL}_K, \infty). 
\end{align}
We have the following result for the preceding function.

\begin{Lem}[Unique minimizer of $\phi^{\KL}$]\label{lem: phi KL}
Let $\Sigma_0 \succ 0$ and Assumption~\ref{as:non-zero-M_K} hold. 
For each $K\in\Ks$, the function $\phi^{\KL}(K,\cdot)$ is \emph{strictly convex} and \emph{coercive} over the domain $\theta > 2\lambda^{\KL}_K$, and hence has a unique minimizer
\begin{align*}
    \theta^{\KL}_K \Let \argmin_{\theta} \{ \phi^{\KL}(K,\theta)  \;:\: \theta > 2\lambda^{\KL}_K \}.
\end{align*}  
\end{Lem}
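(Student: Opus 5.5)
Fix $K\in\Ks$ and diagonalize $N_K\Let\Sigma_0^{1/2}M_K\Sigma_0^{1/2}\succeq 0$, with eigenvalues $\lambda_1\geq\cdots\geq\lambda_p\geq0$ and $\lambda_1=\lambda^{\KL}_K$. First note that $\lambda^{\KL}_K>0$: since $\Sigma_0\succ0$, $N_K=0$ would force $M_K=0$, which the remark following Assumption~\ref{as:non-zero-M_K} excludes. The log-determinant then separates along the spectrum, so
\begin{equation*}
    \phi^{\KL}(K,\theta) = \epsilon\theta - \frac{\theta}{2}\sum_{i=1}^{p}\log\Big(1-\frac{2\lambda_i}{\theta}\Big), \qquad \theta>2\lambda_1 .
\end{equation*}
It therefore suffices to study the scalar maps $g_\lambda(\theta)\Let-\tfrac{\theta}{2}\log(1-2\lambda/\theta)$ for fixed $\lambda\geq0$ on $\theta>2\lambda$. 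Every $\lambda_i\leq\lambda_1$, so each term is well defined on the whole domain.

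For convexity, observe that $g_\lambda$ is the perspective $\theta\,h(1/\theta)$ of $h(s)=-\tfrac12\log(1-2\lambda s)$. Differentiating directly is just as quick:
\begin{equation*}
    g_\lambda''(\theta)=\frac{2\lambda^2}{\theta(\theta-2\lambda)^2}.
\end{equation*}
This is strictly positive for $\lambda>0$ and zero for $\lambda=0$. The linear term $\epsilon\theta$ and the terms with $\lambda_i=0$ contribute nothing to the second derivative. Since the term with $\lambda_1>0$ has strictly positive second derivative on the domain, $\phi^{\KL}(K,\cdot)$ is strictly convex on $(2\lambda^{\KL}_K,\infty)$.

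For coercivity, I would check both ends of the interval. As $\theta\downarrow2\lambda_1$, the term $-\tfrac{\theta}{2}\log(1-2\lambda_1/\theta)$ tends to $+\infty$. The remaining terms stay bounded below, since each $g_{\lambda_i}\geq0$: indeed $-\log(1-x)\geq x\geq0$. As $\theta\to\infty$, the expansion $-\log(1-x)=x+O(x^2)$ gives $g_\lambda(\theta)\to\lambda$, so $\phi^{\KL}(K,\theta)=\epsilon\theta+\sum_i\lambda_i+o(1)\to+\infty$ because $\epsilon>0$. Hence $\phi^{\KL}(K,\cdot)$ is continuous on the open interval and blows up at both ends, so its sublevel sets are compact. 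A minimizer therefore exists, and strict convexity makes it unique.

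The only delicate point is the boundary behaviour. At the right end one needs $\epsilon>0$, not merely $\epsilon\geq0$, because the $\theta$-dependent part tends to the finite limit $\mathrm{tr}(N_K)$. At the left end one needs the top eigenvalue to be strictly positive; the repeated-eigenvalue case causes no trouble, since every term is nonnegative.
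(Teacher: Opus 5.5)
Your proof is correct and follows essentially the same route as the paper: diagonalize $\Sigma_0^{1/2}M_K\Sigma_0^{1/2}$, show strict convexity from $\sum_i 2\lambda_i^2/\big(\theta(\theta-2\lambda_i)^2\big)>0$, and check that $\phi^{\KL}(K,\cdot)$ blows up at both ends of $(2\lambda^{\KL}_K,\infty)$. You also fill in details the paper leaves implicit: that the remaining terms stay nonnegative near the left endpoint, that the $\theta$-dependent part tends to the finite limit $\sum_i\lambda_i$ at infinity, and that $M_K\neq0$ holds for every $K$ by the remark after Assumption~\ref{as:non-zero-M_K}.
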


\begin{proof}
Let $\{\lambda_i\}_{i=1}^p$ be the eigenvalues of the matrix product $\Sigma_0^{1/2} M_K \Sigma_0^{1/2} \succeq 0$ (recall that $\Sigma_0\succ 0$ by assumption and $M_K\succeq 0 $ by construction). 
In particular, since $M_K \neq 0$ by Assumption~\ref{as:non-zero-M_K}, we have that $\lambda^{\KL}_K = \max_{i\in\{1,\ldots,p\}} \lambda_i > 0$. 
Then, it follows that
\begin{align*}
    \phi(K,\theta) = \epsilon \theta  - \frac{\theta}{2} \sum_{i=1}^p  \log\big(1 - \frac{2 \lambda_i}{\theta} \big). 
\end{align*}
and differentiating twice gives
\begin{align*}
    \frac{\partial^2}{\partial \theta^2} \phi(K,\theta) = \sum_{i=1}^p \frac{2 \lambda_i^2}{\theta (\theta - 2\lambda_i)^2} \geq \frac{2 (\lambda^{\KL}_K)^2}{\theta (\theta - 2\lambda^{\KL}_K)^2} > 0, \quad \forall \theta \in (2\lambda^{\KL}_K, \infty).
\end{align*}
This shows that $\phi(K,\cdot)$ is strictly convex for $\theta > 2\lambda^{\KL}_K$. 
As for coercivity, we clearly have $\phi(K,\theta)\uparrow \infty$ as $\theta\uparrow\infty$ since $\epsilon > 0$ by definition. 
On the other hand, observe that when $\theta \downarrow \big(2\lambda^{\KL}_K\big)$, the matrix~$\big(I - \frac{2}{\theta}\Sigma_0^{1/2} M_K \Sigma_0^{1/2}\big)$ loses positive definiteness with its log-det decreasing unboundedly and hence $\phi(K,\theta)\uparrow \infty$. 
This completes the proof.   
\end{proof}

With the preceding result in hand, we can now provide the explicit formula for the KL DR cost.

\begin{Prop}[KL DR cost $v^{\KL}$]\label{prop: DR cost KL}
Let $\Sigma_0 \succ 0$ and Assumption~\ref{as:non-zero-M_K} hold.
Then, $v^{\KL}$ is real-analytic on $\Ks$ and given by
\begin{subequations}\label{eq:KL-DR-cost}
\begin{align}\label{eq:KL-DR-cost-only}
    v^{\KL}(K) = \langle \Sigma^{\KL}_K  , M_K \rangle, \quad \forall K\in\Ks,
\end{align}
where
\begin{align}\label{eq:KL-DR-covariance}
    \Sigma^{\KL}_K \Let \Big(\Sigma_0^{-1} - \frac{2}{\theta^{\KL}_K} M_K \Big)^{-1}.
\end{align}
\end{subequations}
In particular, the worst-case distribution in~\eqref{eq:dr_KL_prob} corresponding to the gain~$K\in\Ks$ is the zero-mean Gaussian $\Pw^{\KL}_K = \mathcal{N}(0,\Sigma_{\Pw^{\KL}_K})$ with covariance $\Sigma_{\Pw^{\KL}_K} = \Sigma^{\KL}_K$. 
\end{Prop}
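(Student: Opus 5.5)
Fix $K\in\Ks$ and abbreviate $M\Let M_K\succeq 0$. By Lemma~\ref{lem: general_H2_cost}, the inner problem in~\eqref{eq:dr_KL_prob} is $\sup\{\EE_{\Pw}[w\tr M w] : \text{dist}^{\KL}(\Pw\|\Pw_0)\le\epsilon,\ \Pw\in\PS\}$. We plan to solve it by Lagrangian duality on the KL constraint combined with the Donsker--Varadhan formula~\cite[Lem.~3.1]{ahmadi2012entropic}. For every $\theta>0$ and every admissible $\Pw$ we have
\begin{equation*}
    \EE_{\Pw}[w\tr M w] \le \theta\,\text{dist}^{\KL}(\Pw\|\Pw_0) + \theta\log \EE_{\Pw_0}\big[e^{w\tr M w/\theta}\big] \le \theta\epsilon + \theta\log\EE_{\Pw_0}\big[e^{w\tr M w/\theta}\big].
\end{equation*}
The Gaussian moment generating function is finite exactly when $\theta>2\lambda^{\KL}_K$. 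In that range it equals $\det(I-\tfrac{2}{\theta}\Sigma_0^{1/2}M\Sigma_0^{1/2})^{-1/2}$, so the right-hand side is $\phi^{\KL}(K,\theta)$. This gives weak duality, $v^{\KL}(K)\le \min_\theta \phi^{\KL}(K,\theta)=\phi^{\KL}(K,\theta^{\KL}_K)$, where the minimum is attained by Lemma~\ref{lem: phi KL}.

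For the reverse inequality we construct a primal maximizer. Take the exponentially tilted measure $d\Pw^\star/d\Pw_0\propto e^{w\tr M w/\theta}$ with $\theta=\theta^{\KL}_K$. Its density is proportional to $\exp(-\tfrac12 w\tr(\Sigma_0^{-1}-\tfrac{2}{\theta}M)w)$, so $\Pw^\star=\mathcal{N}(0,\Sigma^{\KL}_K)$. The condition $\theta>2\lambda^{\KL}_K$ guarantees that $\Sigma_0^{-1}-\tfrac2\theta M\succ0$; this is seen by congruence with $\Sigma_0^{1/2}$. Hence $\Pw^\star\in\PS$. Donsker--Varadhan holds with equality at the tilted measure. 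It remains to check $\text{dist}^{\KL}(\Pw^\star\|\Pw_0)=\epsilon$, which comes from the first-order condition $\partial_\theta\phi^{\KL}(K,\theta^{\KL}_K)=0$. Computing in the eigenbasis with eigenvalues $\lambda_i$, that condition reads
\begin{equation*}
    \epsilon = \tfrac12\sum_i\Big[\log\big(1-\tfrac{2\lambda_i}{\theta}\big)+\tfrac{2\lambda_i/\theta}{1-2\lambda_i/\theta}\Big].
\end{equation*}
Separately, we compute the Gaussian KL divergence $\tfrac12[\Trc(\Sigma_0^{-1}\Sigma^{\KL}_K)-p-\log\det(\Sigma_0^{-1}\Sigma^{\KL}_K)]$. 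The matrix $\Sigma_0^{-1}\Sigma^{\KL}_K$ is similar to $(I-\tfrac2\theta\Sigma_0^{1/2}M\Sigma_0^{1/2})^{-1}$, with eigenvalues $(1-2\lambda_i/\theta)^{-1}$. Substituting these gives exactly the same expression, so $\Pw^\star$ is feasible with the constraint active. Its value is $\theta\epsilon+\theta\log\EE_{\Pw_0}[e^{w\tr Mw/\theta}]=\phi^{\KL}(K,\theta)$, so $v^{\KL}(K)=\phi^{\KL}(K,\theta^{\KL}_K)=\EE_{\Pw^\star}[w\tr M w]=\langle\Sigma^{\KL}_K,M_K\rangle$. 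This proves~\eqref{eq:KL-DR-cost} and identifies the worst-case distribution.

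For real-analyticity, we write $F(K,\theta)\Let\partial_\theta\phi^{\KL}(K,\theta)$. This function is real-analytic on the open set $\{(K,\theta): K\in\Ks,\ \theta>2\lambda^{\KL}_K\}$, since $\phi^{\KL}$ is a log-det of an analytic positive-definite matrix function of $(K,\theta)$. Lemma~\ref{lem: phi KL} gives $\partial_\theta F>0$ at $\theta^{\KL}_K$, so the analytic implicit function theorem makes $K\mapsto\theta^{\KL}_K$ real-analytic. It follows that $\Sigma^{\KL}_K$, and hence $v^{\KL}$, is real-analytic.

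The main obstacle is the duality step. One must check that the Donsker--Varadhan bound is valid for the unbounded function $w\tr M w$ and that restricting to $\PS$ loses nothing. Feasible $\Pw$ with infinite second moment are excluded by the definition of $\PS$, and the bound only needs $\EE_{\Pw}[w\tr Mw]<\infty$. The matching primal construction sidesteps the need for a general strong-duality theorem. A secondary point is that $\lambda^{\KL}_K$ is only continuous in $K$, not analytic. This does not matter, because the implicit function argument takes place inside the open domain.
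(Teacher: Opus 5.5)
Your proof is correct, but it establishes the duality step differently from the paper. The paper relaxes the ambiguity set to all Borel measures in the KL ball. It then imports strong duality from \cite[Thm.~1]{hu2013kullback} and the exponential-tilt form of the maximizer from the proof of \cite[Thm.~2]{hu2013kullback}. Finally it notes that the tilt is a zero-mean Gaussian in $\PS$, so the relaxation costs nothing.

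You prove only weak duality directly, via the Gibbs/Donsker--Varadhan inequality. That inequality actually holds for every measure in the KL ball, by truncation and monotone convergence, because $w\tr M_K w\geq 0$; membership in $\PS$ is not what makes it valid. You then close the gap with an explicit primal certificate: the tilt at $\theta^{\KL}_K$. You show its divergence from $\Pw_0$ equals $\epsilon$ by matching the Gaussian KL formula against the first-order condition $\partial_\theta\phi^{\KL}(K,\theta^{\KL}_K)=0$.

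Your route is self-contained. As a by-product it proves that the KL constraint is active at the worst case, which the paper only asserts in Appendix~\ref{app:compare-ambiguity}. The paper's route is shorter and places the result inside general KL-DRO duality.

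Your analyticity argument matches the paper's, with one step to make explicit. The local implicit-function solution coincides with $\theta^{\KL}_{\tilde K}$ because $\partial_\theta\phi^{\KL}(\tilde K,\cdot)$ has a unique zero on $(2\lambda^{\KL}_{\tilde K},\infty)$; the paper states this and you should too.

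To justify the definite article in ``the worst-case distribution,'' your framework gives uniqueness in one line. For feasible $\Pw$ and $\theta=\theta^{\KL}_K$,
$\theta\log\EE_{\Pw_0}[e^{w\tr M_K w/\theta}]+\theta\,\text{dist}^{\KL}(\Pw\|\Pw_0)-\EE_{\Pw}[w\tr M_K w]=\theta\,\text{dist}^{\KL}(\Pw\|\Pw^\star)$.
At any maximizer the left-hand side equals $\theta(\text{dist}^{\KL}(\Pw\|\Pw_0)-\epsilon)\leq 0$, which forces $\Pw=\Pw^\star$.
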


\begin{proof}
We first derive the characterization~\eqref{eq:KL-DR-cost}. 
Fix $K\in\Ks$. By Lemma~\ref{lem: general_H2_cost}, the generalized cost can be equivalently written as
\begin{align}\label{eq:dr_kl_primal_proof}
    v(K, \Pw) = \langle \Sp , M_K \rangle = \langle \EE_{\Pw}[ww\tr] , M_K \rangle =  \EE_{\Pw}[ \langle ww\tr , M_K \rangle ] = \EE_{\Pw}[w\tr M_K w].
\end{align}
Let us then define the performance map $g_K:\mathbb{R}^p\to\mathbb{R}$ by
\begin{align*}
    g_K(w) := w\tr M_K w,
\end{align*}
so that KL~DR cost~$v^{\KL}$ can be formulated as 
\begin{align}\label{eq:v_KL-proof}
    v^{\KL} (K) = \sup_{\Pw \in \D^{\KL}(\epsilon,\Sigma_0)}  \EE_{\Pw}[g_K(w)].
\end{align}
In what follows, we are going to expand the ambiguity set (by removing the restrictions on $\Pw$ to be zero-mean and have finite covariance) and consider the problem 
\begin{align}\label{eq:v_KL-proof-relaxed}
    \ol{v}^{\KL} (K) \Let \sup_{\Pw}  \big\{\EE_{\Pw}[g_K(w)] \;:\; \Pw \in \mathcal{B}(\R^p),\; \text{dist}^{\KL}(\Pw \| \Pw_0) \leq \epsilon  \big\}.
\end{align}
As we shall see, the optimal solution of the preceding ``relaxed'' problem~\eqref{eq:v_KL-proof-relaxed} is a zero-mean Gaussian distribution belonging to the set $\D^{\KL}(\epsilon,\Sigma_0) \subseteq \PS$. 
Therefore, the optimal solution to the original problem~\eqref{eq:v_KL-proof} is the same zero-mean Gaussian distribution.     

The problem~\eqref{eq:v_KL-proof-relaxed} is a well-studied case in DRO. 
Let us define the domain
\begin{subequations}\label{eq:KL_robust_cost_gen}
\begin{align}\label{eq:KL_robust_cost_domain}
    \Theta_K := \Big\{ \theta > 0  \;:\;  \EE_{\Pw_0} \big[e^{g_K(w)/\theta}\big] < \infty \Big\} \subset \R.
\end{align} 
If $\Theta_K \neq \varnothing$, we then have the dual representation \cite[Thm.~1]{hu2013kullback} 
\begin{align}\label{eq:KL_robust_cost_opt}
    \ol{v}^{\KL} (K) = \inf_{ \theta \in \Theta_K } \Big\{ \epsilon \theta  + \theta \log \EE_{\Pw_0} \big[e^{g_K(w)/\theta}\big] \Big\}.
\end{align}
\end{subequations}
Now, observe that since $\Pw_0=\mathcal{N}(0,\Sigma_0)$ with $\Sigma_0\succ 0$ by assumption and $M_K$ is symmetric (and positive semi-definite) for $K\in\Ks$, we can use the results for the moment generating function of quadratic forms of normal random variables (see, e.g., \cite[Cor.~3.2a1]{MathaiProvost1992}) to write 
\begin{align*}
     \EE_{\Pw_0} \big[e^{g_K(w)/\theta}\big] = \left\{\begin{array}{ll}
         \det\big(I - \frac{2}{\theta} \Sigma_0^{1/2} M_K \Sigma_0^{1/2} \big)^{-1/2} & \text{if } \theta > 2\lambda^{\KL}_K,  \\
         \infty & \text{otherwise}.
     \end{array} \right.
\end{align*}
Then, using the fact that
\begin{align*}
    \log\det\big(I - \frac{2}{\theta} \Sigma_0^{1/2} M_K \Sigma_0^{1/2}\big)^{-1/2} = - \frac{1}{2} \log\det\big(I - \frac{2}{\theta} \Sigma_0^{1/2} M_K \Sigma_0^{1/2}\big),
\end{align*}
we have
\begin{align*}
    \ol{v}^{\KL}(K) = \inf_{\theta} \big\{ \phi^{\KL}(K,\theta) \;:\: \theta > 2\lambda^{\KL}_K \big\}. 
\end{align*}
By Lemma~\ref{lem: phi KL}, the preceding problem admits a unique dual minimizer $\theta^{\KL}_K$. 
Moreover, the corresponding worst-case distribution~$\Pw^{\KL}_K$ in~\eqref{eq:v_KL-proof-relaxed} is the exponential tilt of $\Pw_0$ characterized by the Radon-Nikodym derivative (see the proof of \cite[Thm.~2]{hu2013kullback})
\begin{align*}
    \frac{d\Pw^{\KL}_K}{d\Pw_0}(w) = \frac{e^{g_K(w)/\theta^{\KL}_K}}{\EE_{\Pw_0} \big[e^{g_K(w)/\theta^{\KL}_K}\big]}.
\end{align*}
For the nominal Gaussian distribution $\Pw_0 = \mathcal{N}(0,\Sigma_0)$, the corresponding density is given by
\begin{align*}
    p_0(w) = (2\pi)^{-p/2}\, \det(\Sigma_0)^{-1/2}\, \exp\Big(-\frac{1}{2} w\tr \Sigma_0^{-1} w \Big).
\end{align*}
Tilting by $\exp \big(g_K(w) / \theta^{\KL}_K \big) = \exp\big( w\tr M_K w / \theta^{\KL}_K \big)$ then gives the un-normalized worst-case density
\begin{align*}
    p^{\KL}_K(w) \propto\exp\big( w\tr M_K w / \theta^{\KL}_K \big)\, p_0(w) 
    \propto \exp \Big( - \frac{1}{2} w\tr \big( \Sigma_0^{-1} - \frac{2}{\theta^{\KL}_K} M_K \big) w \Big)
    = \exp \big( - \frac{1}{2} w\tr (\Sigma^{\KL}_{K})^{-1} w \big).
\end{align*}
In particular, by Lemma~~\ref{lem: phi KL}, $\theta^{\KL}_K > 2\lambda^{\KL}_K$, and hence \begin{align*}
    (\Sigma^{\KL}_{K})^{-1}  = \Sigma_0^{-1/2} \Big( I - \frac{2}{\theta^{\KL}_K} \Sigma_0^{1/2} M_K \Sigma_0^{1/2} \Big) \Sigma_0^{-1/2} \succ 0.
\end{align*}
Therefore, the normalized tilted distribution, that is, the corresponding worst-case distribution for the relaxed problem~\eqref{eq:v_KL-proof-relaxed}, is the zero-mean Gaussian distribution~$\Pw^{\KL}_K = \mathcal{N}(0,\Sigma_{\Pw^{\KL}_K})$ with $\Sigma_{\Pw^{\KL}_K} = \Sigma^{\KL}_{K}$. 
This, in turn, implies that $\Pw^{\KL}_K$ is also the optimal solution of the original problem~\eqref{eq:v_KL-proof}.  
Then, using the primal moment formulation~\eqref{eq:dr_kl_primal_proof}, we have 
\begin{align*}
    v^{\KL}(K) = v(K,\Pw^{\KL}_K) =  \langle \Sigma^{\KL}_{K} , M_K\rangle.
\end{align*}

We next show that $v^{\KL}$ is real-analytic on $\Ks$. 
The main ingredient is showing that the map $K\ra \theta^{\KL}_K$ is real-analytic on $\Ks$. 
To that end, let us denote $R_K \Let \Sigma_0^{1/2}M_K\Sigma_0^{1/2}$ and define the map
\begin{align*}
   \Phi^{\KL}(K,\theta) \Let \frac{\partial}{\partial\theta} \phi^{\KL}(K,\theta) = \epsilon - \frac{1}{2} \log \det \big( I - \frac{2}{\theta} R_K \big) - \frac{1}{\theta}
      \Big\langle \big( I - \frac{2}{\theta} R_K \big)^{-1} ,R_K \Big\rangle, 
\end{align*}
over the domain $(K,\theta) \in \Ks \times (2\lambda^{\KL}_K, \infty)$. 
In particular, observe that $\big( I - \frac{2}{\theta} R_K \big) \succ 0$ over this domain since $\lambda^{\KL}_K = \lambda_{\max}(R_K)$. 
Also, recall that $M_K$ is a real-analytic function over $K\in\Ks$; see the proof of Lemma~\ref{lem:standard_H2_grad}. 
Then, since matrix inversion and log-det are real-analytic on the space of positive definite matrices, the map $(K,\theta)\mapsto\Phi^{\KL}(K,\theta)$ is real-analytic on its domain. 

Now, fix $K\in\Ks$. 
By Lemma~\ref{lem: phi KL}, $\theta^{\KL}_K$ is the unique minimizer of the strictly convex and coercive function~$\phi^{\KL}(K,\cdot)$, and hence, we have $\Phi^{\KL}(K,\theta^{\KL}_K) = 0$ and $\frac{\partial}{\partial\theta} \Phi^{\KL}(K,\theta^{\KL}_K) > 0$. 
Therefore, by the real-analytic implicit function theorem (see, e.g., \cite[Thm.~2.4.4]{krantz2002implicit}), there exists a neighborhood $\mathcal{K} \subset \Ks$, an interval $\mathcal{I}$ containing $\theta^{\KL}_K$, and a real-analytic map $\psi_K:\mathcal{K}\ra\mathcal{I}$ such that
\begin{align}\label{eq:implicit_sol}
    \Phi^{\KL}\big(\tilde{K},\psi_K(\tilde{K})\big) = 0, \quad \forall \tilde{K}\in\mathcal{K}.
\end{align}
Moreover, since $\phi^{\KL}(K,\cdot)$ is strictly convex and coercive and hence has unique global minimizer, the equation $\Phi^{\KL}(\tilde{K}, \theta) = 0$ has exactly one solution, namely, $\theta_{\tilde{K}}$. 
Therefore, the implicit solution in~\eqref{eq:implicit_sol} must coincide with this global minimizer, that is, $\psi_K(\tilde{K}) = \theta_{\tilde{K}}$ for all $\tilde{K}\in\mathcal{K}$. 
Thus, $\theta_{\tilde{K}}$ is a real-analytic function of $\tilde{K}$ in a neighborhood of every point $K\in\Ks$. 
Hence, it follows that the map $K\mapsto\theta^{\KL}_K$ is real-analytic over the entire open set~$\Ks$ of stabilizing controllers.     

Finally, recall that $v^{\KL}(K) = \langle \Sigma^{\KL}_K , M_K \rangle$ for all $K\in\Ks$, where
\begin{align*}
    \Sigma^{\KL}_K = \Sigma_0^{1/2} \big( I - \frac{2}{\theta^{\KL}_K} R_K \big)^{-1} \Sigma_0^{1/2}.
\end{align*}
Recall that $M_K$ and $\theta^{\KL}_K$ are real-analytic on $\Ks$ and $(I - \frac{2}{\theta^{\KL}_K} R_K) \succ 0$ for all $K\in\Ks$. 
Hence, since  matrix inversion is real-analytic on the space of positive definite matrices, $\Sigma^{\KL}_K$ is also a real-analytic function of $K\in\Ks$. 
This, in turn, implies that $v^{\KL}$ is real-analytic on $\Ks$. 
\end{proof}


\subsection{W2 ambiguity set} 
We now focus on ambiguity sets defined based on the W2 distance. 
Let us first recall the definition of the Wasserstein distance. 
For two distributions $\Qw$ and $\Pw$ on $\R^p$, let $\Pi(\Qw,\Pw)$ denote the set of all probability measures $\pi$ on $\R^p\times\R^p$ with marginals $\Qw$ and $\Pw$  (a.k.a.~couplings or transport plans), that is,
\begin{align*}
    \pi(A\times\R^p)=\Qw(A) \quad \text{~and~} \quad
    \pi(\R^p\times B)=\Pw(B),
    \quad\text{for all Borel~}A,B\subset\R^p.
\end{align*}
The Wasserstein-2 (W2) distance is defined by (see~\cite[Defs.~6.1 and~6.4]{villani2009optimal})
\begin{equation*}
    \text{dist}^{\WA}(\Qw,\Pw) \Let \left(
    \inf_{\pi\in\Pi(\Qw,\Pw)}\,
    \int_{\R^p\times\R^p}\|w-\tilde w\|_2^2\,d\pi(w,\tilde w)
    \right)^{\!1/2}.
\end{equation*}
Notice that $\text{dist}^{\WA}(\Qw,\Pw)<\infty$ if $\Qw$ and $\Pw$ have a finite second moment. 
Given the nominal zero-mean Gaussian distribution $\Pw_0 = \mathcal{N}(0,\Sigma_0)$ with $\Sigma_0\succeq 0$ and a radius $\epsilon>0$, we now consider the \emph{W2 ball} 
\begin{subequations}\label{eq:dr_W2}
\begin{align}\label{eq:dr_W2_set}
    \D^{\WA}(\epsilon,\Sigma_0) \Let \{\Pw \in \PS \;:\; \text{dist}^{\WA}(\Pw,\Pw_0) \leq \epsilon\},
\end{align}
as the ambiguity set and define the corresponding DR $\mathcal{H}_2$ synthesis problem
\begin{align}\label{eq:dr_W2_prob}
    v^{\WA}\opt := \min_{K \in \Ks}\, \left\{ v^{\WA}(K) \Let \sup_{\Pw \in \D^{\WA}(\epsilon,\Sigma_0)} v(K, \Pw) \right\}.
\end{align}
\end{subequations}
We next provide an explicit formula for the W2 DR cost~$v^{\WA}$. 
Similar to the KL case discussed above, this is done by using the duality theory (to be precise, Kantorovich duality for quadratic transport cost~\cite[Ch.~1]{villani2021topics}). 
Thus, let us define 
\begin{align*}
   \lambda^{\WA}_K \Let \lambda_{\max}(M_K), \quad \forall K\in\Ks,
\end{align*}
and
\begin{align*}
    \phi^{\WA}(K,\theta) \Let \epsilon^2 \theta   + \Trc\!\left(\theta M_K(\theta I-M_K)^{-1}\Sigma_0\right),
    \quad \forall K\in\Ks,\; \forall\theta \in (\lambda^{\WA}_K, \infty).
\end{align*}
We have the following result for the preceding function.

\begin{Lem}[Unique minimizer of $\phi^{\WA}$]\label{lem: phi W2}
Let $\Sigma_0 \succ 0$ and Assumption~\ref{as:non-zero-M_K} hold. 
For each $K\in\Ks$, the function $\phi^{\WA}(K,\cdot)$ is \emph{strictly convex} and \emph{coercive} over the domain $\theta > \lambda^{\WA}_K$, and hence has a unique minimizer
\begin{align*}
    \theta^{\WA}_K \Let \argmin_{\theta} \{ \phi^{\WA}(K,\theta)  \;:\: \theta > \lambda^{\WA}_K \}.
\end{align*}  
\end{Lem}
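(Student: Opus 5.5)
The plan mirrors the proof of Lemma~\ref{lem: phi KL}: diagonalize $M_K$ and reduce $\phi^{\WA}(K,\cdot)$ to a scalar sum. Since $M_K\succeq 0$ is symmetric, write $M_K = U\Lambda U\tr$ with $U$ orthogonal and $\Lambda=\operatorname{diag}(\mu_1,\ldots,\mu_p)$, where $\mu_i\ge 0$. Set $s_i \Let (U\tr \Sigma_0 U)_{ii}$. Because $\Sigma_0\succ0$, every $s_i>0$. The matrix $\theta M_K(\theta I - M_K)^{-1}$ is diagonal in the same basis, so
\begin{align*}
    \phi^{\WA}(K,\theta) = \epsilon^2\theta + \sum_{i=1}^p s_i\,\frac{\theta\mu_i}{\theta-\mu_i}, \qquad \theta>\lambda^{\WA}_K = \max_i \mu_i.
\end{align*}
By Assumption~\ref{as:non-zero-M_K}, $M_K\neq0$ for every $K\in\Ks$, as remarked before the Frobenius subsection. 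Hence $\lambda^{\WA}_K>0$, and the domain $(\lambda^{\WA}_K,\infty)$ is a nonempty open interval on which every term is well defined.

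\textbf{Strict convexity.} Rewrite each summand as $\frac{\theta\mu_i}{\theta-\mu_i} = \mu_i + \frac{\mu_i^2}{\theta-\mu_i}$. Differentiating twice gives
\begin{align*}
    \frac{\partial^2}{\partial\theta^2}\phi^{\WA}(K,\theta) = \sum_{i=1}^p \frac{2 s_i\mu_i^2}{(\theta-\mu_i)^3}.
\end{align*}
Each term is nonnegative on the domain. The term with $\mu_i=\lambda^{\WA}_K>0$ is strictly positive because $s_i>0$. So the second derivative is positive and $\phi^{\WA}(K,\cdot)$ is strictly convex.

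\textbf{Coercivity.} As $\theta\uparrow\infty$, each summand converges to the finite limit $s_i\mu_i$, while $\epsilon^2\theta\uparrow\infty$ since $\epsilon>0$. As $\theta\downarrow\lambda^{\WA}_K$, each summand with $\mu_i=\lambda^{\WA}_K$ contains $s_i\mu_i^2/(\theta-\mu_i)\uparrow\infty$. All other terms stay bounded below, since they are nonnegative. Hence $\phi^{\WA}(K,\theta)\uparrow\infty$ at both ends of the interval.

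A continuous, strictly convex function that tends to $+\infty$ at both ends of an open interval attains a minimum, and that minimizer is unique. This gives $\theta^{\WA}_K$.

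The only delicate point is the role of $\Sigma_0\succ0$. It is what guarantees $s_i>0$ for the top eigendirection of $M_K$, which drives both the strict positivity of the second derivative and the blow-up at the left endpoint. With only $\Sigma_0\succeq0$, both properties could fail.
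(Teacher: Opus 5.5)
Your proposal is correct and follows essentially the same route as the paper: diagonalize $M_K$, reduce $\phi^{\WA}(K,\cdot)$ to $\epsilon^2\theta+\sum_i s_i\mu_i\theta/(\theta-\mu_i)$ with $s_i>0$ from $\Sigma_0\succ0$, bound the second derivative below by the top-eigenvalue term, and check blow-up at both ends of $(\lambda^{\WA}_K,\infty)$. The differences are cosmetic: your $\mu_i, s_i$ play the roles of the paper's $\lambda_i, \mu_i$. Your remark that the remaining summands are nonnegative near the left endpoint makes the paper's coercivity step slightly more explicit.
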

\begin{proof}
Let $\{\lambda_i\}_{i=1}^p$ be the eigenvalues of $ M_K \succeq 0$ (recall that $M_K$ is a symmetric and positive semi-definite by construction). 
In particular, notice that since $M_K \neq 0$ by Assumption~\ref{as:non-zero-M_K}, we have that $\lambda^{\WA}_K = \max_{i\in\{1,\ldots,p\}} \lambda_i > 0$. 
Diagonalize $M_K=U\operatorname{diag}(\lambda_1,\ldots,\lambda_p)U\tr$ and denote the $i$-th diagonal entry of $U\tr \Sigma_0 U$ by
$\mu_i$. 
Notice that since $\Sigma_0\succ0$ by assumption, each diagonal entry
$\mu_i$ is strictly positive. 
Also, let $\mu^{\WA}_K = \mu_j > 0$ for $j \in \argmax_i \lambda_i$.  
Then, it follows that 
\begin{align*}
    \phi^{\WA}(K,\theta) = \epsilon^2  \theta+ \sum_{i=1}^p \frac{ \mu_i\lambda_i\theta}{\theta-\lambda_i},
\end{align*}
and differentiating twice gives
\begin{align*}
    \frac{\partial^2}{\partial\theta^2}\phi^{\WA}(K,\theta)
    = \sum_{i=1}^p \frac{2\mu_i\lambda_i^2}{(\theta-\lambda_i)^3} 
    \geq  \frac{2\mu^{\WA}_K(\lambda^{\WA}_K)^2}{(\theta-\lambda^{\WA}_K)^3}  >0, \quad \forall \theta\in(\lambda^{\WA}_K, \infty). 
\end{align*}
Hence, $\phi^{\WA}(K,\cdot)$ is strictly convex over the domain $\theta>\lambda^{\WA}_K$.
Also, $\phi^{\WA}(K,\theta)\uparrow\infty$ as
$\theta\downarrow\lambda^{\WA}_K$ and as $\theta\uparrow\infty$; that is, $\phi^{\WA}(K,\cdot)$ is coercive on its domain. 
Thus, the minimizer $\theta^{\WA}_K$ is unique. 
\end{proof}
Using the preceding definitions, we can now provide the W2 DR cost explicitly.

\begin{Prop}[W2 DR cost $v^{\WA}$]\label{prop: DR cost W2}
Let $\Sigma_0 \succ 0$ and Assumption~\ref{as:non-zero-M_K} hold. 
Then, $v^{\WA}$ is real-analytic on $\Ks$ and given by
\begin{subequations}\label{eq:DR-W2-cost}
\begin{align}\label{eq:DR-W2-cost-only}
    v^{\WA}(K) = \langle \Sigma^{\WA}_K  , M_K \rangle, \quad \forall K\in\Ks,
\end{align}
where
\begin{align}\label{eq:DR-W2-covariance}
    \Sigma^{\WA}_K \Let (\theta^{\WA}_K)^2\,(\theta^{\WA}_K I-M_K)^{-1}\,\Sigma_0\,(\theta^{\WA}_K I-M_K)^{-1}.
\end{align}
\end{subequations}
In particular, the worst-case distribution in~\eqref{eq:dr_W2_prob} corresponding to the gain~$K\in\Ks$ is the zero-mean Gaussian $\Pw^{\WA}_K = \mathcal{N}(0,\Sigma_{\Pw^{\WA}_K})$ with covariance $\Sigma_{\Pw^{\WA}_K} = \Sigma^{\WA}_K$. 
\end{Prop}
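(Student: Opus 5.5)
The plan follows the KL case. Fix $K\in\Ks$ and write $v(K,\Pw)=\EE_{\Pw}[g_K(w)]$ with $g_K(w)=w\tr M_K w$, as in~\eqref{eq:dr_kl_primal_proof}. We then relax the ambiguity set to all distributions with finite second moment satisfying $\text{dist}^{\WA}(\Pw,\Pw_0)\le\epsilon$, dropping the zero-mean requirement. Call the relaxed value $\ol{v}^{\WA}(K)$.

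\textbf{Dual bound.} For the relaxed problem we use the standard strong duality for Wasserstein DRO with quadratic transport cost (Kantorovich duality, e.g.\ \cite{GaoKleywegt2023}):
\[
\ol{v}^{\WA}(K)=\inf_{\theta\ge0}\Big\{\theta\epsilon^2+\EE_{\Pw_0}\big[\sup_{w}\{w\tr M_K w-\theta\|w-\tilde w\|_2^2\}\big]\Big\}.
\]
The inner supremum is $+\infty$ unless $\theta\ge\lambda^{\WA}_K$. For $\theta>\lambda^{\WA}_K$ it is attained at $w=\theta(\theta I-M_K)^{-1}\tilde w$, with value $\tilde w\tr\theta M_K(\theta I-M_K)^{-1}\tilde w$. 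Taking the expectation under $\mathcal N(0,\Sigma_0)$ gives exactly $\phi^{\WA}(K,\theta)$. The boundary $\theta=\lambda^{\WA}_K$ is excluded, since $\phi^{\WA}$ blows up there when $\Sigma_0\succ0$ (Lemma~\ref{lem: phi W2}). Hence $\ol{v}^{\WA}(K)=\phi^{\WA}(K,\theta^{\WA}_K)$.

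If we prefer not to rely on the duality theorem, weak duality alone gives the upper bound $v^{\WA}(K)\le\ol{v}^{\WA}(K)\le\phi^{\WA}(K,\theta)$ for every admissible $\theta$, by the pointwise inequality $g_K(w)\le\theta\|w-\tilde w\|^2+\sup_w\{\cdot\}$ integrated against an optimal coupling. A matching primal point then closes the gap, and this is the route we take.

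\textbf{Primal attainment.} Let $T=\theta^{\WA}_K(\theta^{\WA}_K I-M_K)^{-1}$, which is symmetric positive definite, and let $\Pw^{\WA}_K$ be the pushforward of $\Pw_0$ under $w=T\tilde w$. This is $\mathcal N(0,T\Sigma_0T)=\mathcal N(0,\Sigma^{\WA}_K)$, so it is zero-mean Gaussian and lies in $\PS$.

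\emph{Feasibility.} The coupling $(T\tilde w,\tilde w)$ has cost
\[
\EE\|(T-I)\tilde w\|^2=\Trc\big((T-I)^2\Sigma_0\big),\qquad T-I=M_K(\theta I-M_K)^{-1}.
\]
The first-order condition $\partial_\theta\phi^{\WA}=0$ reads $\epsilon^2=\Trc\big(M_K^2(\theta I-M_K)^{-2}\Sigma_0\big)$, using $\partial_\theta[\theta\lambda/(\theta-\lambda)]=-\lambda^2/(\theta-\lambda)^2$. So the transport cost equals $\epsilon^2$ and $\Pw^{\WA}_K$ is feasible.

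\emph{Optimality.} Its objective value is $\langle\Sigma^{\WA}_K,M_K\rangle$. Writing $\theta=\theta^{\WA}_K$, the identity
\[
T M_K T=\theta M_K(\theta I-M_K)^{-1}+\theta\,(T-I)^2
\]
holds in eigencoordinates, since $\theta^2\lambda/(\theta-\lambda)^2=\theta\lambda/(\theta-\lambda)+\theta\lambda^2/(\theta-\lambda)^2$. Together with the feasibility identity this gives $\langle\Sigma^{\WA}_K,M_K\rangle=\phi^{\WA}(K,\theta)$. The primal value therefore matches the dual bound, so it is optimal for both the relaxed and the original problems, and~\eqref{eq:DR-W2-cost} follows.

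\textbf{Real-analyticity.} We copy the KL argument. The map $\Phi^{\WA}=\partial_\theta\phi^{\WA}$ is real-analytic on $\{(K,\theta):\theta>\lambda_{\max}(M_K)\}$, and $\partial_\theta\Phi^{\WA}>0$ at $\theta^{\WA}_K$ by strict convexity (Lemma~\ref{lem: phi W2}). The real-analytic implicit function theorem, combined with uniqueness of the root, makes $K\mapsto\theta^{\WA}_K$ real-analytic. Hence $\Sigma^{\WA}_K$ and $v^{\WA}$ are real-analytic as well.

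The main obstacle is justifying the dual bound rigorously, which reduces to the weak-duality inequality plus integrability under $\Pw_0$. The explicit primal construction removes any need for a strong-duality theorem, so the algebraic identity in the optimality step is the only delicate computation.
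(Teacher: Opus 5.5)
Your proof is correct. It differs from the paper's in the key step, which is how the dual bound is closed.

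\textbf{The paper's route.} The paper invokes strong Kantorovich duality for the relaxed problem via \cite{blanchet2019quantifying}. It identifies the value as $\min_\theta \phi^{\WA}(K,\theta)$. It then imports the description of the worst-case measure as the pushforward under $T_K$ from the remarks in that reference, without checking feasibility or the attained value directly.

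\textbf{Your route.} You use only weak duality: the pointwise inequality integrated against an optimal coupling. You then close the gap with an explicit primal certificate, and both computations check out.
\begin{itemize}
\item The first-order condition $\partial_\theta\phi^{\WA}=0$ shows that the coupling $(T\tilde w,\tilde w)$ has transport cost exactly $\epsilon^2$, so $T_{\#}\Pw_0$ is feasible.
\item The eigenvalue identity $\theta^2\lambda/(\theta-\lambda)^2=\theta\lambda/(\theta-\lambda)+\theta\lambda^2/(\theta-\lambda)^2$ shows that $\langle\Sigma^{\WA}_K,M_K\rangle=\phi^{\WA}(K,\theta^{\WA}_K)$.
\end{itemize}

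\textbf{What each buys.} Your argument is self-contained and does not need the hypotheses of a strong-duality theorem. As a by-product it proves that the Wasserstein constraint is active, which the paper only asserts in Appendix~\ref{app:compare-ambiguity}. The paper's argument is shorter. It also gets uniqueness of the worst-case distribution, reflected in the definite article in the statement, from the cited remarks. Your write-up as it stands only shows that $\mathcal{N}(0,\Sigma^{\WA}_K)$ is \emph{a} maximizer.

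\textbf{Recovering uniqueness.} You can get it from your own chain of inequalities. Suppose a feasible $\Pw$ attains $\phi^{\WA}(K,\theta^{\WA}_K)$, and let $\pi$ be an optimal coupling of $\Pw$ and $\Pw_0$. Then the nonnegative integrand $\sup_z\{g_K(z)-\theta\|z-\tilde w\|_2^2\}+\theta\|w-\tilde w\|_2^2-g_K(w)$ has zero $\pi$-integral, so it vanishes $\pi$-a.s. Since $\theta I-M_K\succ0$, the inner problem is strictly concave, so this forces $w=T\tilde w$ $\pi$-a.s., that is, $\Pw=T_{\#}\Pw_0$. All terms are integrable because $\Pw$ and $\Pw_0$ have finite second moments.

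Your real-analyticity argument is the same as the paper's.
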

\begin{proof} 
We first derive the characterization~\eqref{eq:DR-W2-cost}. 
Fix $K\in\Ks$. 
Recall that, using Lemma~\ref{lem: general_H2_cost} and the quadratic performance map $g_K:\mathbb{R}^p\to\mathbb{R}: w\mapsto w\tr M_K w $, the $\mathrm{W_2}$~DR cost~$v^{\WA}$ can be rewritten as
\begin{align}\label{eq:v_W2-proof}
    v^{\WA} (K) = \sup_{\Pw \in \D^{\WA}(\epsilon,\Sigma_0)}  \EE_{\Pw}[g_K(w)].
\end{align}
In what follows, we are going to expand the ambiguity set (by removing the restrictions on $\Pw$ to be zero-mean) and consider the problem 
\begin{align}\label{eq:v_W2-proof-relaxed}
    \ol{v}^{\WA} (K) \Let \sup_{\Pw}  \big\{\EE_{\Pw}[g_K(w)] \;:\; \Pw \in \mathcal{B}(\R^p),\; \text{dist}^{\WA}(\Pw \| \Pw_0) \leq \epsilon  \big\}.
\end{align}
As we shall see, the optimal solution of the preceding ``relaxed'' problem~\eqref{eq:v_W2-proof-relaxed} is a zero-mean Gaussian distribution belonging to the set $\D^{\WA}(\epsilon,\Sigma_0) \subseteq \PS$. 
Therefore, the optimal solution to the original problem~\eqref{eq:v_W2-proof} is the same zero-mean Gaussian distribution. 

The preceding problem~\eqref{eq:v_W2-proof-relaxed} is also a well-studied case in DRO. 
In particular, via the Kantorovich duality approach, we have (see, e.g., \cite[Thm.~1 and Rem.~1]{blanchet2019quantifying})
\begin{align*}
    \ol{v}^{\WA} (K) = \inf_{\theta \ge 0}\Big\{\epsilon^2\theta + \EE_{\Pw_0}\Big[\sup_{z\in\R^p}\,\big\{g_K(z)-\theta\|z-w\|^2\big\}\Big]\Big\}.
\end{align*}
For fixed $w$, the objective of the inner maximization is finite precisely when
$\theta I-M_K\succ0$, i.e., $\theta > \lambda^{\WA}_K$. 
In that case, the objective is a strictly concave quadratic function in $z$, and the unique maximizer is $z_K(\theta,w)= \theta(\theta I-M_K)^{-1}w$. 
Substitution then gives the optimal value
\begin{align*}
    \sup_z\{z\tr M_K z - \theta\|z-w\|^2\}
    &= w\tr\left(\theta^2(\theta I-M_K)^{-1}-\theta I\right)w \\
    &= w\tr\theta M_K(\theta I-M_K)^{-1}w.
\end{align*}
Since $\Pw_0=\mathcal{N}(0,\Sigma_0)$, taking expectation yields
\begin{align*}
    \ol{v}^{\WA} (K) = \min_{\theta} \{ \phi^{\WA}(K,\theta) \;:\; \theta > \lambda^{\WA}_K \}.
\end{align*}
By Lemma~\ref{lem: phi W2}, the preceding problem admits a unique dual minimizer $\theta^{\WA}_K$. 
Moreover, the worst-case distribution can be characterized uniquely by the linear map~\cite[Rem.~2 and Rem.~3]{blanchet2019quantifying} 
\begin{equation*}
    T_K(w) = z_K(\theta^{\WA}_K,w) = \theta^{\WA}_K(\theta^{\WA}_K I-M_K)^{-1}w.
\end{equation*}
That is, the worst-case distribution in the relaxed problem~\eqref{eq:v_W2-proof-relaxed} is the pushforward $\Pw^{\WA}_K=(T_K)_{\#}\Pw_0$, which is the zero-mean Gaussian with covariance $\Sigma_K^{\WA}$ as in \eqref{eq:DR-W2-covariance}. 
This, in turn, implies that $\Pw^{\WA}_K$ is also the optimal solution of the original problem~\eqref{eq:v_KL-proof}.  
Finally, using the primal moment formulation, we have 
\begin{align*}
    v^{\WA}(K) = v(K,\Pw^{\WA}_K) =  \langle \Sigma^{\WA}_{K} , M_K\rangle.
\end{align*}

We next show that $v^{\WA}$ is real-analytic on $\Ks$. 
The main ingredient is again showing that $K\mapsto\theta^{\WA}_K$ is real-analytic. 
Consider
\begin{align*}
    \Phi^{\WA}(K,\theta) \Let \frac{\partial}{\partial\theta}\phi^{\WA}(K,\theta) = \epsilon^2 - \Trc\!\left( M_K(\theta I-M_K)^{-1} \Sigma_0 M_K(\theta I-M_K)^{-1} \right),
\end{align*}
on the open set $(K,\theta)\in\Ks\times(\lambda^{\WA}_K,\infty)$. 
Recall that $M_K$ is a real-analytic function on $K\in\Ks$; see the proof of Lemma~\ref{lem:standard_H2_grad}. 
Then, since $\theta I-M_K \succ 0$ for $\theta > \lambda^{\WA}_K = \lambda_{\max}(M_K)$ and matrix inversion is real-analytic on the space of positive definite matrices, $\Phi^{\WA}$ is real-analytic. 
By Lemma~\ref{lem: phi W2}, $\Phi^{\WA}(K,\theta^{\WA}_K)=0$ and $\frac{\partial}{\partial\theta}\Phi^{\WA}(K,\theta^{\WA}_K)>0$. 
The real-analytic implicit function theorem (see, e.g., \cite[Thm.~2.4.4]{krantz2002implicit}) therefore gives a local
real-analytic representation of $\theta^{\WA}_K$ around every $K\in\Ks$.  Since the
scalar minimizer is unique, these local representations agree on overlaps;
hence $K\mapsto\theta^{\WA}_K$ is real-analytic on $\Ks$. 
It follows from~\eqref{eq:DR-W2-covariance} that $K\mapsto \Sigma_K^{\WA}$ is
real-analytic. 
Thus, $v^{\WA}(K)=\langle \Sigma_K^{\WA},M_K\rangle$ is real-analytic on $\Ks$.
\end{proof}

\subsection{DR optimality of standard $\mathcal{H}_2$-optimal gain}\label{sec:ambiguity_compare}

The three ambiguity sets satisfy Assumption~\ref{as:ambiguity}.
They contain the nominal distribution and are independent of $K$.
For the Frobenius set,
$\|\Sp\|_F\leq\|\Sigma_0\|_F+\epsilon$.
For the KL set, we can use the relative-entropy variational
inequality in \cite[Sec.~2.1]{hu2013kullback} with
$f(w)=\|w\|_2^2/\theta$ and any fixed
$\theta>2\lambda_{\max}(\Sigma_0)$ to obtain
\[
 \sup_{\Pw\in\D^{\KL}(\epsilon,\Sigma_0)}
 \EE_{\Pw}[\|w\|_2^2]
 \leq \epsilon\theta-\frac{\theta}{2}
 \log\det\big(I-\frac{2}{\theta}\Sigma_0\big)
 <\infty.
\]
For the W2 set, the triangle inequality \cite[Ch.~6]{villani2009optimal}, with the Dirac
measure $\mathsf{D}_0$ concentrated entirely at the origin, gives
\[
 \bigl(\EE_{\Pw}[\|w\|_2^2]\bigr)^{1/2}
 =\text{dist}^{\WA}(\Pw,\mathsf{D}_0)
 \leq \text{dist}^{\WA}(\Pw,\Pw_0) + \text{dist}^{\WA}(\Pw_0,\mathsf{D}_0)
 =\epsilon+\sqrt{\Trc(\Sigma_0)}.
\]
Since the distributions are centered,
$\|\Sp\|_F\leq\Trc(\Sp)=\EE_{\Pw}[\|w\|_2^2]$,
which establishes the required uniform covariance bounds.
The explicit representation of the DR costs for these cases then allows us to provide the following result.  
Therefore, the results of the two previous sections also hold for the Frobenius DR cost and problem of this subsection. 
To be precise, for the DR $\mathcal{H}_2$ problems~\eqref{eq:dr_frob}, \eqref{eq:dr_KL}, and \eqref{eq:dr_W2} with the corresponding ambiguity sets, we have the following results with $\mathrm{a}\in\{\mathrm{F},\, \mathrm{KL},\, \mathrm{W}\}$:
\begin{itemize}
\item By Proposition~\ref{prop:DR_H2_Clarke}, the gradient of the corresponding DR cost is given by
\begin{equation}\label{eq:dr_gradients}
    \nabla_K v^{\mathrm{a}}(K) = 2 S_K \Wc^{\mathrm{a}}_K, \quad \forall K\in\Ks,
\end{equation}
where $\Wc^{\mathrm{a}}_K \Let \Wc_K(\Sigma^{\mathrm{a}}_K)$ is the solution to the Lyapunov equation~\eqref{eq:Lyap_eq_control} with $\Sigma = \Sigma^{\mathrm{a}}_K$.

\item By Theorem~\ref{thm:DR_H2_optimal_grad}, a stabilizing gain $K\opt\in\Ks$ is a solution of the corresponding DR $\mathcal{H}_2$ problem if and only if $\nabla_K v^{\mathrm{a}}(K\opt) = 0$. 

\item By Theorem~\ref{thm:dr_H2_standard}, if $K\opt\in\Ks$ is a standard $\mathcal{H}_2$-optimal gain, then $K\opt$ is also a solution of the DR $\mathcal{H}_2$ problems with the corresponding worst-case covariance $\Sigma^{\mathrm{a}}_{K\opt}$ and optimal value
\begin{equation}\label{eq:dr_value}
    v^{\mathrm{a}}\opt = \langle\Sigma^{\mathrm{a}}_{K\opt},M_{K\opt}\rangle.
\end{equation} 
    
\end{itemize}

The last item above once again indicates that the standard $\mathcal{H}_2$-optimal solution, if it exists, also solves the DR $\mathcal{H}_2$ problems considered in this subsection. 
The difference between the three cases is however in the corresponding worst-case covariance and optimal DR cost. 
For a discussion on the differences between the three choices of the uncertainty models considered above see Appendix~\ref{app:compare-ambiguity}.

\subsection{Numerical illustration of performance certification}
\label{sec:numerical}

In this subsection, we use a synthetic numerical example to illustrate the worst-case performance certificates of Section~4 for a controller obtained from standard $\mathcal H_2$ synthesis. 
To this end, we consider a system with $n=10$ states, $m=3$ inputs, and $p=5$ disturbance channels. 
The state matrix~$A$ is randomly generated and normalized to be unstable with the spectral radius $\rho(A) = 2$;
the input matrix~$\Bu$ is randomly generated such that the pair~$(A, \Bu)$ is controllable;
the disturbance matrix is $\Bw$ randomly generated;
the performance matrices $C$ and $D$ are set such that $z_t = [x_t\tr \ \ u_t\tr]\tr $. 
A solution to the standard $\mathcal{H}_2$ synthesis problem~\eqref{eq:standard_H2_problem} gain for this problem 
is then the LQ-optimal gain $K_{\mathrm{LQ}}$ for the deterministic dynamics $x_{t+1}=Ax_t+Bu_t$ and state- and input-cost weighting matrices $Q = I$ and $R =I$ respectively; see Remark~\ref{Rem:standrad_H2_optimal_suff}. 
We note that $\rho(A_{K_{\mathrm{LQ}}}) = 0.53$.

Using the results of Section~\ref{sec:ambiguity_compare}, we can now verify if this standard $\mathcal{H}_2$-optimal gain solves the DR $\mathcal{H}_2$ problems corresponding the Frobenius, KL, and W2 ambiguity sets. 
To this end, we consider the DR $\mathcal{H}_2$ problems~\eqref{eq:dr_frob}, \eqref{eq:dr_KL}, and, \eqref{eq:dr_W2}, with radius $\epsilon \in (0 , 1]$ and nominal covariance $\Sigma_0 = (LL\tr)/\rho(LL\tr)$, where $L\in\R^{p\times p}$ is a randomly generated full-rank matrix. 
Computing the gradient of the corresponding standard and DR costs at $K_{\mathrm{LQ}}$ using~\eqref{eq:dr_gradients} resulted in $\|\nabla f(K_{\mathrm{LQ}})\|_F \leq 10^{-10}$ for all $f\in\{v^{\St},\,v^{\F},\,v^{\KL},\,v^{\WA}\}$. 
That is, the gain $K_{\mathrm{LQ}}$ is also a stationary point of the DR costs, and hence, by Theorem~\ref{thm:DR_H2_optimal_grad}, solves the corresponding DR $\mathcal{H}_2$ problems. 
We note that for the KL and W2 ambiguity sets, the optimal dual variables $\theta^{\KL}_{K\opt}$ and $\theta^{\WA}_{K\opt}$ in Lemmas~\ref{lem: phi KL} and ~\ref{lem: phi W2}, respectively, are computed using bisection. 
Figure~\ref{fig:epsilon_compare} depicts the optimal value of these problems, computed using~\eqref{eq:dr_value}, for different values of the radius $\epsilon \in (0 , 1]$. 
As expected, the Frobenius DR cost shows a linear sensitivity. The KL DR cost has higher sensitivity for smaller values of $\epsilon$, while that is the case for the W2 DR cost for larger values of $\epsilon$.

\begin{figure}[t]
\centering
\includegraphics[width=0.35\textwidth]{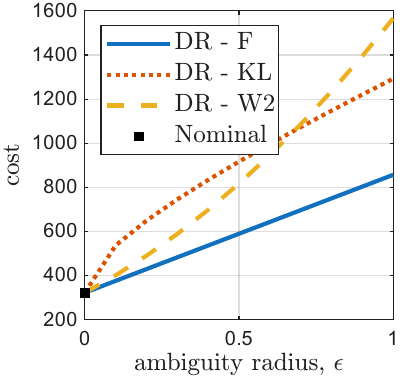}
\caption{Sensitivity of the DR costs to ambiguity radius.}
\label{fig:epsilon_compare}
\end{figure}

\section{Final Remarks}
\label{sec:conclusion}

In this paper, we considered the distributionally robust extension of the $\mathcal{H}_2$ synthesis problem for discrete-time LTI systems. 
We first established global landscape properties for the standard problem and then used the covariance representation of the generalized cost to analyze ambiguity sets with zero mean and uniformly bounded covariances.
The resulting DR objective may be nonsmooth, however it is locally Lipschitz and Clarke regular, and Clarke stationarity is both necessary and sufficient for global optimality.
We also showed that any attained standard $\mathcal{H}_2$ optimum is simultaneously optimal for every such DR problem. 
Moreover, we derived explicit worst-case models for spectral, Frobenius, KL, and W2 ambiguity sets. 
We conclude with several remarks on limitations of these results.

\textbf{Output-feedback synthesis.}
The analysis relies essentially on static state feedback, $u_t=Kx_t$.
For static output feedback, $u_t=Ky_t$, the closed-loop matrix has the form $A+\Bu K C_{\mathrm{y}}$, and the corresponding stabilizing set is generally nonconvex and may be \emph{disconnected}~\cite{feng2020connectivity}.
Moreover, output-feedback policy optimization can exhibit spurious stationary points, including nonglobal local minima and saddle points~\cite{duan2023optimization}.
The trajectory-orthogonality and matrix-minimality arguments used here therefore do not transfer directly.
Identifying additional structural conditions under which a componentwise or global benign-landscape result survives for static or dynamic output feedback remains an important open problem.

\textbf{Other uncertainty models.} 
Our distributional model assumes i.i.d.\ zero-mean disturbances with finite covariance.
Under these assumptions, steady-state quadratic performance depends on the disturbance distribution only through a single covariance matrix, which is precisely what enables the universal optimality result.
The Frobenius, KL, and Wasserstein models considered here represent substantially different geometries.
Nonetheless, all preserve this covariance reduction.
Natural extensions include nonzero or uncertain means, temporally correlated and nonstationary disturbances, ambiguity sets for entire disturbance processes, and simultaneous uncertainty in the system matrices.
In these settings the cost need not reduce to $\langle\Sigma,M_K\rangle$, and a standard $\mathcal{H}_2$ optimizer need not remain distributionally robust.
Another important direction is the statistical calibration of the ambiguity radius from finite trajectory data, which would connect the synthesis guarantees developed here to out-of-sample performance guarantees.

\textbf{Restrictions on the controller.}
The common-optimizer result holds over the full stabilizing state-feedback set $\Ks$.
Additional restrictions, such as sparsity or gain bounds, may exclude all unrestricted standard $\mathcal H_2$ optimizers. 
In this case, minimizing the nominal scalar cost over the restricted set need not minimize $M_K$ in the Loewner order. 
Feasible gains may therefore trade performance across disturbance directions, and the worst-case covariance can change their ranking. 
Thus, the restricted nominal and DR problems need not share an optimizer, although a difference is not guaranteed. 
Conversely, any unrestricted standard optimizer that remains feasible retains its matrix-minimality property and hence remains optimal for both restricted problems.

\appendix

\section{Auxiliary results}

\subsection{Comparison of the three ambiguity sets}\label{app:compare-ambiguity}

In what follows, we provide a brief discussion on the differences between the three choices of the uncertainty models discussed in Section~\ref{sec:specific-ambiguity-set}. 

Recall that the three worst-case covariance matrices admit the following
representations:
\begin{subequations}
\begin{align}
    \Sigma_K^{\F}  &= \Sigma_0 + \frac{\epsilon}{\|M_K\|_{F}}M_K, \label{eq:wc_cov_frob_comparison}\\
    \Sigma_K^{\KL} &= \Sigma_0^{1/2} \big( I-2 (\theta_K^{\KL})^{-1} R_K \big)^{-1} \Sigma_0^{1/2}, \quad R_K \Let \Sigma_0^{1/2}M_K\Sigma_0^{1/2}, \label{eq:wc_cov_KL_comparison}\\
    \Sigma_K^{\WA} &= T_K\,\Sigma_0\,T_K\tr, \quad T_K \Let \big(I-(\theta_K^{\WA})^{-1} M_K\big)^{-1},
    \label{eq:wc_cov_W2_comparison}
\end{align}
\end{subequations}
Let us first notice that in all three cases, the DR constraint is active and the worst-case distribution lies at the boundary of ambiguity set (under Assumption~\ref{as:non-zero-M_K}). 
In particular, we have
\begin{itemize}
    \item $\left\|\Sigma^{\F}_K- \Sigma_0 \right\|_{F} = \epsilon $ for the Frobenius ambiguity set in Proposition~\ref{prop: DR cost frob},
    \item $\text{dist}^{\KL}(\Pw^{\KL}_K,\Pw_0) = \epsilon$ for the KL ambiguity set in Proposition~\ref{prop: DR cost KL}, and, 
    \item $\text{dist}^{\WA}(\Pw^{\WA}_K,\Pw_0) = \epsilon$ for the W2 ambiguity set in Proposition~\ref{prop: DR cost W2}.
\end{itemize}

The Frobenius adversary performs an additive displacement of the nominal covariance $\Sigma_0$ in the direction $M_K$. 
Indeed, $M_K/\|M_K\|_{F}$ is the steepest-ascent direction of the linear functional $\Sigma\mapsto\langle \Sigma, M_K\rangle$ with respect to the Frobenius geometry. 
Notice that the direction of this displacement does not depend on $\Sigma_0$. 
Since the cost depends only on the covariance, every zero-mean distribution with covariance
$\Sigma_K^{\F}$ is worst case, e.g., the Gaussian representative $ \Pw_K^{\F} = \mathcal{N}(0,\Sigma_K^{\F})$.

For the KL ambiguity set, the worst-case distribution is uniquely obtained by exponentially tilting the nominal Gaussian distribution. 
The worst-case distribution remains Gaussian and satisfies $\Pw_K^{\KL} = \mathcal{N}(0,\Sigma_K^{\KL})$. 
Observe that the covariance increment in this case can be written as
\begin{equation*}
    \Sigma_K^{\KL}-\Sigma_0 = \Sigma_0^{1/2} \left[\left( I-2 (\theta_K^{\KL})^{-1} R_K \right)^{-1} -I \right] \Sigma_0^{1/2} \succeq 0.
\end{equation*}
Thus, the KL adversary produces a nonlinear multiplicative inflation in whitened coordinates. 
The relevant directions are the eigendirections of $R_K$, which jointly account for the performance sensitivity $M_K$ and the nominal covariance $\Sigma_0$. 

For the W2 ambiguity set, the worst-case distribution is the pushforward of the nominal distribution through the linear transport map $T_K$, that is, $\Pw_K^{\WA} = (T_K)_{\#}\Pw_0 = \mathcal{N}(0,\Sigma^{\WA}_K)$. 
The Wasserstein perturbation is therefore fundamentally different from the Frobenius displacement and the KL exponential tilt: 
It acts on disturbance realizations before forming their covariance. 
In particular, the difference $\Sigma_K^{\WA}-\Sigma_0$ need not be positive definite when $M_K$ and $\Sigma_0$ do not commute. 
Thus, Wasserstein transport may both rotate the nominal covariance ellipsoid and redistribute variance
among its directions, while increasing the value $\langle \Sigma, M_K\rangle$.

The distinction between the three ambiguity sets becomes particularly transparent when $M_K$ and
$\Sigma_0$ commute. Suppose that $M_K$ and
$\Sigma_0$ are simultaneously diagonalized
as
\[
    M_K=U\,\text{diag}(\lambda_1,\ldots,\lambda_p)\,U\tr,
    \quad
    \Sigma_0=U\,\text{diag}(\sigma_1,\ldots,\sigma_p)\,U\tr.
\]
Then, all three worst-case covariance matrices have the same eigenvectors with eigenvalues 
\begin{align*}
    \sigma_i^{\F} = \sigma_i+ \frac{\epsilon\lambda_i} {\big(\sum_{j=1}^p \lambda_j^2\big)^{1/2}}, \quad
    \sigma_i^{\KL} = \frac{\sigma_i} {1-2\sigma_i \lambda_i/\theta_K^{\KL}}, \quad 
    \sigma_i^{\WA} = \frac{\sigma_i} {(1-\lambda_i/\theta_K^{\WA})^2}.
\end{align*}
Hence, the Frobenius model produces an additive covariance
perturbation, the KL model produces a multiplicative inflation
determined jointly by $\sigma_i$ and $\lambda_i$, and the Wasserstein model
stretches the disturbance realization in direction $i$ by the factor
$(1-\lambda_i/\theta_K^{\WA})^{-1}$.

\subsection{Invisible directions in standard $\mathcal{H}_2$ cost}\label{app:invisible}

The following result discusses the invisible directions of the standard $\mathcal{H}_2$ cost $v^{\St}$ using the time-domain characterization~\eqref{eq:standard-H2-cost-markov} via closed-loop Markov parameters. 
Recall that $\Wc_K^{\St}$ is the solution to the Lyapunov equation~\eqref{eq:Lyap_eq_control} associated with unit disturbance covariance, that is, the controllability Gramian of the pair~$(\Acl_K,\Bw)$. 

\begin{Prop}[Invisible directions in standard $\mathcal H_2$ cost]
\label{prop:invisible_direction}
Let $K\in\Ks$ and define
\begin{equation*}
    \mathcal{I}_K \Let \left\{ \Delta\in\R^{m\times n}: \begin{bmatrix} \Bu \\ D \end{bmatrix} \Delta \Wc_K^{\St}=0 \right\}.
\end{equation*}
Then, for every $\Delta\in\mathcal{I}_K$ such that $K+\Delta\in\Ks$, we have $v^{\St}(K+\Delta)=v^{\St}(K)$.
\end{Prop}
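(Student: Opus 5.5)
The plan is to show that the closed-loop Markov parameters $\Ccl_{K+\Delta}\Acl_{K+\Delta}^t\Bw$ coincide with $\Ccl_K\Acl_K^t\Bw$ for every $t\in\N_0$. The conclusion then follows immediately from the time-domain expression~\eqref{eq:standard-H2-cost-markov}. Set $K'\Let K+\Delta$. Then $\Acl_{K'}=\Acl_K+\Bu\Delta$ and $\Ccl_{K'}=\Ccl_K+D\Delta$.

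The first step is to pass from the Gramian condition to a trajectory-level condition, mirroring the argument behind~\eqref{eq:stationary_H2_reachable_residual} in the proof of Theorem~\ref{thm:standard_H2_optimal_grad}. From $\Bu\Delta\Wc_K^{\St}=0$ we get $\Bu\Delta\Wc_K^{\St}\Delta\tr\Bu\tr=0$. Expanding with the convergent series~\eqref{eq:contr_gram} writes this as a sum of positive semidefinite terms $(\Bu\Delta\Acl_K^t\Bw)(\Bu\Delta\Acl_K^t\Bw)\tr$, so each term vanishes:
\[
\Bu\Delta\Acl_K^t\Bw=0,\qquad t\in\N_0.
\]
The same argument applied to $D\Delta\Wc_K^{\St}=0$ gives $D\Delta\Acl_K^t\Bw=0$ for all $t\in\N_0$. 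This uses only that $\Acl_K$ is Schur, i.e.\ $K\in\Ks$.

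The second step is an induction showing $\Acl_{K'}^t\Bw=\Acl_K^t\Bw$ for all $t$. The case $t=0$ is trivial. For the inductive step,
\[
\Acl_{K'}^{t+1}\Bw=(\Acl_K+\Bu\Delta)\Acl_K^t\Bw=\Acl_K^{t+1}\Bw+\Bu\Delta\Acl_K^t\Bw=\Acl_K^{t+1}\Bw,
\]
where the last equality uses the first step. Then
\[
\Ccl_{K'}\Acl_{K'}^t\Bw=(\Ccl_K+D\Delta)\Acl_K^t\Bw=\Ccl_K\Acl_K^t\Bw,
\]
using the $D$-part of the first step. Summing $\|\cdot\|_F^2$ over $t$ via~\eqref{eq:standard-H2-cost-markov} gives $v^{\St}(K')=v^{\St}(K)$. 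The hypothesis $K'\in\Ks$ is needed only so that $v^{\St}(K')$ is defined through this formula; in fact the impulse response is identical, so $\Gzw_{K'}=\Gzw_K$.

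The only mildly delicate point is the first step. The stacked condition $[\Bu;D]\Delta\Wc_K^{\St}=0$ must be converted into vanishing on every reachable vector $\Acl_K^t\Bw$, not just on $\range(\Wc_K^{\St})$ abstractly. This works because $\range(\Wc_K^{\St})$ equals the span of $\{\Acl_K^t\Bw\}_{t}$, which the PSD-sum argument makes explicit. An alternative is to apply the PSD argument directly to the stacked matrix $[\Bu;D]\Delta$, which handles both components at once. Everything else is routine algebra.
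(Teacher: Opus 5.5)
Your proposal is correct and follows the paper's proof. You convert the stacked Gramian condition into $\Bu\Delta\Acl_K^t\Bw=0$ and $D\Delta\Acl_K^t\Bw=0$ for all $t\in\N_0$, then induct to show the closed-loop Markov parameters are unchanged. The only difference is that you justify the first step with the positive-semidefinite sum argument from the proof of Theorem~\ref{thm:standard_H2_optimal_grad}, whereas the paper cites the identity $\range(\Wc_K^{\St})=\operatorname{span}\{\Bw,\Acl_K\Bw,\ldots,\Acl_K^{n-1}\Bw\}$; this makes your version slightly more self-contained.
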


\begin{proof}
Since $\Acl_K$ is Schur stable, the range of its controllability
Gramian is equal to the reachable subspace of the pair~$(\Acl_K,\Bw)$, that is,
\begin{equation*}
    \range(\Wc_K^{\St}) = \text{span} \begin{bmatrix} \Bw & \Acl_K\Bw & \Acl_K^2\Bw & \cdots & \Acl_K^{n-1}\Bw \end{bmatrix}.
\end{equation*}
In particular,
\begin{equation*}
    \range(\Acl_K^t\Bw) \subseteq \range(\Wc_K^{\St}), \quad \forall t\in\N_0.
\end{equation*}
Let $\Delta\in\mathcal{I}_K$ so that $\Bu\Delta\Wc_K^{\St}=0$ and $D\Delta\Wc_K^{\St}=0$. 
Then, using the preceding inclusion, we arrive at
\begin{equation}\label{eq:invisible_on_reachable_subspace}
    \Bu\Delta\Acl_K^t\Bw=0 \quad \text{and} \quad  D\Delta\Acl_K^t\Bw=0, \quad \forall t\in\N_0.
\end{equation}
We now prove by induction that
\begin{equation}\label{eq:equal_state_markov_parameters}
    \Acl_{K+\Delta}^t\Bw = \Acl_K^t\Bw, \quad \forall t\in\N_0.    
\end{equation}
The equality is immediate for $t=0$. 
Suppose that it holds for some $t\in\N_0$. 
Then, using the first equality in \eqref{eq:invisible_on_reachable_subspace}, we obtain
\begin{align*}
    \Acl_{K+\Delta}^{t+1}\Bw = \Acl_{K+\Delta} \Acl_{K+\Delta}^t\Bw
    = (\Acl_K+\Bu\Delta)\Acl_K^t\Bw
    = \Acl_K^{t+1}\Bw + \Bu\Delta\Acl_K^t\Bw 
    = \Acl_K^{t+1}\Bw.
\end{align*}
Finally, using~\eqref{eq:equal_state_markov_parameters} and the second equality in~\eqref{eq:invisible_on_reachable_subspace}, we have
\begin{align*}
    \Ccl_{K+\Delta}\Acl_{K+\Delta}^t\Bw = (\Ccl_K+D\Delta)\Acl_K^t\Bw \nonumber
    = \Ccl_K\Acl_K^t\Bw +D \Delta\Acl_K^t\Bw = \Ccl_K\Acl_K^t\Bw, \quad \forall t\in\N_0.
\end{align*}
That is, all closed-loop Markov parameters from the disturbance $w$
to the performance output $z$ remain unchanged. 
The claim then follows from the time-domain characterization~\eqref{eq:standard-H2-cost-markov} of $v^{\St}$ using the closed-loop Markov parameters.
\end{proof}

In simple words, the preceding result states that any stable perturbation~$\Delta$ of a stabilizing control gain~$K$ that is in the null space of $[\Bu\tr \; D\tr]\tr$ and/or the unreachable subspace of the pair $(\Acl_K,\Bw)$ will be invisible in the sense that it does not affect the value of the cost $v^{\St}(K)$. 
This is indeed expected since the closed-loop matrices $\Acl_K = A+\Bu K$ $\Ccl_K = C+D K$ are by construction insensitive to perturbations of $K$ in the null space of $[\Bu \;\; D]$. 
More importantly, $\range (\Wc_K^{\St})$ defines the reachable subspace of the pair $(\Acl_K,\Bw)$, that is, the part of the state space $\mathbb{R}^n$ that the disturbance $w$ is physically capable of exciting.
By the Kalman controllability decomposition, any state in the orthogonal complement $\range (\Wc_K^{\St})^{\perp}$ is decoupled from the disturbance. 
Hence, perturbations of $K$ in $\range (\Wc_K^{\St})^{\perp}$ act only on the unreachable modes of the pair $(\Acl_K,\Bw)$ and leave the input-output behavior of the disturbance channel invariant. 

\subsection{From DR to standard $\mathcal{H}_2$ optimality}\label{app:DR_stand_optimality}

The following result provides a sufficient condition for a solution of the DR $\mathcal{H}_2$ problem problem to be also optimal for the standard $\mathcal{H}_2$ problem. 
Recall that $\mathcal{C}_{K} = \argmax_{\Sigma\in\mathcal{C}}\; \langle \Sigma,M_K\rangle$ denotes 
the set of active worst-case covariances corresponding to $K\in\Ks$ in the DR $\mathcal{H}_2$ problem. 
Also, recall that $\Wc_K (\Sigma)$ is the solution to the Lyapunov equation~\eqref{eq:Lyap_eq_control} associated with disturbance covariance $\Sigma$.

\begin{Prop}[From DR to standard $\mathcal H_2$ optimality] \label{prop:DR_optimal_implies_standard}
Let Assumption~\ref{as:ambiguity} hold and $K\opt\in\Ks$ be a stabilizing solution of the DR $\mathcal{H}_2$ problem~\eqref{eq:dr_H2_problem}. 
Also, let $\Sigma_{K\opt} \in \mathcal{C}_{K\opt}$ be an active worst-case covariance corresponding to $K\opt$ such that $S_{K\opt}\Wc_{K\opt} (\Sigma_{K\opt})=0$. 
If $\Sigma_{K\opt}\succ0$, then the gain $K\opt$ is also a solution of the standard $\mathcal{H}_2$ problem~\eqref{eq:standard_H2_problem}.
\end{Prop}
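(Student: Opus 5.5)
The argument reduces to showing that stationarity of $K\opt$ for the covariance $\Sigma_{K\opt}$ forces stationarity for the standard cost, after which Theorem~\ref{thm:standard_H2_optimal_grad} finishes the proof. The bridge is that $\Sigma_{K\opt}\succ0$ makes the reachable subspaces of $(\Acl_{K\opt},\Bw\Sigma_{K\opt}^{1/2})$ and $(\Acl_{K\opt},\Bw)$ coincide.

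First, write $S\Let S_{K\opt}$ and $\Sigma\Let\Sigma_{K\opt}$. Since $S\,\Wc_{K\opt}(\Sigma)=0$, I repeat the argument from the proof of Theorem~\ref{thm:standard_H2_optimal_grad} with $\Bw$ replaced by $\Bw\Sigma^{1/2}$. Multiplying on the right by $S\tr$ and using the series~\eqref{eq:contr_gram}, which converges because $\Acl_{K\opt}$ is Schur, gives
\begin{equation*}
0=\sum_{t=0}^\infty \big(S\Acl_{K\opt}^t\Bw\Sigma^{1/2}\big)\big(S\Acl_{K\opt}^t\Bw\Sigma^{1/2}\big)\tr .
\end{equation*}
Every summand is positive semidefinite, so each vanishes, and hence $S\Acl_{K\opt}^t\Bw\Sigma^{1/2}=0$ for all $t\in\N_0$.

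Second, I use $\Sigma\succ0$. Then $\Sigma^{1/2}$ is invertible, and right-multiplying by $\Sigma^{-1/2}$ yields $S\Acl_{K\opt}^t\Bw=0$ for all $t$. Substituting into the series for $\Wc^{\St}_{K\opt}$ gives $S\,\Wc^{\St}_{K\opt}=\sum_t S\Acl_{K\opt}^t\Bw\Bw\tr(\Acl_{K\opt}^t)\tr=0$. By Lemma~\ref{lem:standard_H2_grad}, this means $\nabla_K v^{\St}(K\opt)=0$.

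Third, Theorem~\ref{thm:standard_H2_optimal_grad} now applies: it requires only $K\opt\in\Ks$ and stationarity, and gives global optimality of $K\opt$ for~\eqref{eq:standard_H2_problem}, with the Pythagorean identity~\eqref{eq:standard_H2_optimal_grad} as a byproduct.

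The argument never uses the DR-optimality of $K\opt$ or that $\Sigma$ is active; only the stationarity relation $S_{K\opt}\Wc_{K\opt}(\Sigma_{K\opt})=0$ and positive definiteness enter. The one step needing care is the second, where $\Sigma\succ0$ is indispensable. With a singular $\Sigma$, the reachable subspace of $(\Acl_{K\opt},\Bw\Sigma^{1/2})$ can be strictly smaller than that of $(\Acl_{K\opt},\Bw)$, so $S$ need not vanish on the latter. The factorization through $\Sigma^{1/2}$ is what makes the reduction clean.
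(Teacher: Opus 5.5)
Your proposal is correct and follows essentially the same route as the paper. Both arguments expand $S_{K\opt}\Wc_{K\opt}(\Sigma_{K\opt})S_{K\opt}\tr$ via the Gramian series to get $S_{K\opt}\Acl_{K\opt}^t\Bw\Sigma_{K\opt}^{1/2}=0$, cancel $\Sigma_{K\opt}^{1/2}$ using $\Sigma_{K\opt}\succ0$, conclude $\nabla_K v^{\St}(K\opt)=0$, and invoke Theorem~\ref{thm:standard_H2_optimal_grad}. Your remark that DR-optimality and activeness of $\Sigma_{K\opt}$ are never used is accurate; the paper invokes them only to re-derive the existence of such a $\Sigma_{K\opt}$, which the statement already supplies.
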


\begin{proof}
By Theorem~\ref{thm:DR_H2_optimal_grad}, since $\Ks$ is a DR $\mathcal{H}_2$-optimal solution, we have $0\in\partial_{\rm C}v^{\DR}(K\opt)$. 
Then, by the Clarke subdifferential representation in Proposition~\ref{prop:DR_H2_Clarke}, there exists an active worst-case covariance $\Sigma_{K\opt} \in \mathcal{C}_{K\opt}$ such that $S_{K\opt}\Wc_{K\opt} (\Sigma_{K\opt})=0$. 
Post-multiplying the latter equality by $S_{K\opt}\tr$ and taking the trace, we obtain
\begin{equation*}
    \Trc \big(S_{K\opt}\, \Wc_{K\opt} (\Sigma_{K\opt})\, S_{K\opt}\tr \big)=0.
\end{equation*}
Since $K\opt\in\Ks$, the matrix $\Acl_{K\opt}$ is Schur stable and the controllability Gramian $\Wc_{K\opt} (\Sigma_{K\opt})$ admits the convergent series representation~\eqref{eq:contr_gram}. 
Plugging in the series representation of $\Wc_{K\opt} (\Sigma\opt)$ in the equality above, we arrive at
\begin{align*}
    \sum_{t=0}^{\infty} \Trc\!\left(S_{K\opt}\Acl_{K\opt}^t\Bw\, \Sigma_{K\opt}\Bw\tr(\Acl_{K\opt}^t)\tr S_{K\opt}\tr \right) 
    = \sum_{t=0}^{\infty} \left\| S_{K\opt}\Acl_{K\opt}^t\Bw\,\Sigma_{K\opt}^{1/2} \right\|_F^2 = 0.
\end{align*}
Every summand in the preceding equality is nonnegative. Hence,
\begin{equation*}
    S_{K\opt}\Acl_{K\opt}^t\Bw\,\Sigma_{K\opt}^{1/2} = 0, \quad \forall t\in\N_0.
\end{equation*}
By assumption $\Sigma_{K\opt}$ and hence $\Sigma_{K\opt}^{1/2}$ is nonsingular.
Then, pre-multiplying the preceding equality by $\Sigma_{K\opt}^{-1/2}$ gives
\begin{equation}\label{eq:unweighted_reachable_stationarity}
     S_{K\opt}\Acl_{K\opt}^t\Bw = 0, \quad \forall t\in\N_0.
\end{equation}
On the other hand, using the gradient formula for the standard $\mathcal H_2$ cost in Lemma~\ref{lem:standard_H2_grad} and the convergent series representation~\eqref{eq:contr_gram} of standard controllability Gramian $\Wc_{K\opt}^{\St} = \Wc_{K\opt}(I)$, we have
\begin{equation*}
    \nabla_Kv^{\St}(K\opt) = 2\,S_{K\opt}\Wc_{K\opt}^{\St} = 2 \sum_{t=0}^{\infty} \big(S_{K\opt}\Acl_{K\opt}^t\Bw\big) \Bw\tr(\Acl_{K\opt}^t)\tr.
\end{equation*}
Using the equality \eqref{eq:unweighted_reachable_stationarity}, we conclude that $\nabla_K v^{\St}(K\opt) = 0$. 
Then, by Theorem~\ref{thm:standard_H2_optimal_grad}, $K\opt$ is a standard $\mathcal{H}_2$-optimal solution. 
\end{proof}

We notice that the nonsingularity of $\Sigma_{K\opt}$ is used precisely in passing from $S_{K\opt}\Acl_{K\opt}^t\Bw\,\Sigma_{K\opt}^{1/2}=0$ to $S_{K\opt}\Acl_{K\opt}^t\Bw=0$ for all $t\in\N_0$.
If $\Sigma_{K\opt}$ is singular, the DR $\mathcal H_2$-optimality condition only constrains the disturbance directions contained in $\range(\Sigma_{K\opt})$. 
Therefore, the directions contained in the null space of $\Sigma_{K\opt}$ remain invisible, and the standard $\mathcal H_2$ optimality does not follow.

\bibliographystyle{apalike} 
\begin{small}
\bibliography{ref}
\end{small}

\end{document}